\renewcommand{\baselinestretch}{1.1}
\theoremstyle{definition}
\newtheorem{Satz}{Satz}
\newtheorem{lem}[Satz]{Lemma}
\newtheorem{thrm}[Satz]{Theorem}
\newtheorem{defn}[Satz]{Definition}
\newtheorem{lemdefn}[Satz]{Lemma/Definition}
\newtheorem{thrmdefn}[Satz]{Theorem/Definition}
\newtheorem{prop}[Satz]{Proposition}
\newtheorem{rem}[Satz]{Remark}
\newtheorem{nota}[Satz]{Notation}
\newtheorem{kor}[Satz]{Corollary}
\newtheorem{exa}[Satz]{Example}
\newcommand{\sA}{\mathcal A}
\newcommand{\sB}{\mathcal B}
\newcommand{\sC}{\mathcal C}
\newcommand{\sD}{\mathcal D}
\newcommand{\sR}{\mathcal R}
\newcommand{\sL}{\mathcal L}
\newcommand{\sN}{\mathcal N}
\newcommand{\Mor}{\operatorname{Mor}}
\newcommand{\Hom}{\operatorname{Hom}}
\newcommand{\Ob}{\operatorname{Ob}}
\newcommand{\R}{\operatorname{R}}
\newcommand{\F}{\operatorname{F}}
\newcommand{\Adel}{\operatorname{Adel}}
\newcommand{\C}{\operatorname{C}}
\newcommand{\K}{\operatorname{K}}
\renewcommand{\Im}{\operatorname{Im}}
\newcommand{\oS}{\operatorname{S}}
\renewcommand{\H}{\operatorname{H}}
\newcommand{\p}{\operatorname{p}}
\newcommand{\iso}{\operatorname{iso}}
\renewcommand{\i}{\operatorname{i}}
\newcommand{\N}{\textbf{N}}
\newcommand{\Z}{\textbf{Z}}
\newcommand{\op}{\operatorname{op}}
\newcommand{\D}{\operatorname{D}}
\newcommand{\I}{\operatorname{I}}
\newcommand{\J}{\operatorname{J}}
\newcommand{\nZ}{\operatorname{Z}}
\newcommand{\Zf}{\operatorname{\textbf{Z}-free}}
\newcommand{\Zm}{\operatorname{\textbf{Z}-mod}}
\newcommand{\Zzm}{\operatorname{\textbf{Z}/2-mod}}
\renewcommand{\k}{\operatorname{k}}
\renewcommand{\c}{\operatorname{c}}
\renewcommand{\i}{\operatorname{i}}
\newcommand{\Freyd}{\operatorname{Freyd}}
\newcommand{\arm}{\ar~+{|*\dir{*}}} 
\newcommand{\armfl}[1]{\ar~+{|(#1)*\dir{*}}} 
\newcommand{\are}{\ar~+{|*\dir{|}}} 
\newcommand{\arefl}[1]{\ar~+{|(#1)*\dir{|}}}
\newcommand{\sm}[1]{\ensuremath{\left(\begin{smallmatrix}#1\end{smallmatrix}\right)}}
\newcommand{\moplus}{\mathord \oplus}
\begin{document} 
\title{Bachelor's Thesis \\[4mm] \Huge
\sc Adelman's Abelianisation of an Additive Category}

\author{Nico Stein\\University of Stuttgart}

\date{November 2012 \\[40mm]
Advisor: Dr.\ Matthias K\"unzer}

\maketitle

\newpage

%\mbox{} \thispagestyle{empty}

%\newpage

\begin{footnotesize}
\renewcommand{\baselinestretch}{0.7}%
\parskip0.0ex%
\tableofcontents%
\thispagestyle{empty}
\parskip1.2ex%
\renewcommand{\baselinestretch}{1.0}%
\end{footnotesize}%

\setcounter{page}{2} 

%\setcounter{chapter}{0}

%\newpage

%\mbox{} \thispagestyle{empty}

%\newpage

\chapter*{Introduction}
\addcontentsline{toc}{chapter}{Introduction} 

The category $\Zf$ of finitely generated free abelian groups has as objects the groups of the form $\Z^{\oplus k}$, where $k \in \N_0$, and as morphisms matrices with entries in $\Z$. We can add morphisms with same source and target by addition of matrices. Moreover, we have direct sums of objects. So $\Zf$ is an example of an additive category. In additive categories the morphisms with same source and target form an  abelian group under addition, which is bilinear with respect to the composition, and finite direct sums of objects exist.

Abelian categories are additive categories in which there exist kernels and cokernels such that a kernel-cokernel-factorisation property is fulfilled. They form the classical setting for homological algebra. The category $\Zm$ of finitely generated abelian groups and group homomorphisms is the archetypical example for an abelian category. In contrast, the category $\Zf$ mentioned above is additive but not abelian.

In \cite{adelman}, Adelman gives a construction that extends a given additive category $\sA$ such that the resulting category $\Adel(\sA)$ is abelian and satisfies the following universal property.\\
 For every additive functor $F \colon \sA \to \sB$ with $\sB$ abelian, there exists an exact functor $\hat F \colon \Adel(\sA) \to \sB$, unique up to isotransformation, such that $F = \hat F \circ \I_{\sA}$, where $\I_{\sA}$ is the inclusion functor from $\sA$ to $\Adel(\sA)$.
\begin{align*}
\xymatrix{
\sA \ar[r]^{F} \ar[d]_{\I_{\sA}}& \sB\\
\Adel(\sA) \ar[ur]_{\hat F}
}
\end{align*}
For example, the (additive) inclusion functor $E$ from $\Zf$ to $\Zm$ yields an exact functor $\hat E \colon \Adel(\Zf) \to \Zm$. This functor cannot be an equivalence since there exist non-kernel-preserving functors from $\Zf$ to abelian categories or, alternatively, since $\Zm$ has not enough injectives. Cf. example \ref{exa:Z}.

Already in \cite[th~4.1]{freyd}, Freyd shows the existence of such a universal abelian category. To this end, he embeds the additive category $\sA$ into a product of abelian categories  and then finds the desired abelian category as a subcategory of this product. \\
He also constructs a category $\Freyd(\sA)$ which is abelian if and only if $\sA$ has weak kernels \cite[th~3.2]{freyd}.
Beligiannis shows in \cite[th~6.1]{beligiannis} that the category $\Freyd^{\op}(\Freyd(\sA))$ has this universal property, where $\Freyd^{\op}$ denotes the dual version. An equivalent two-step construction is described by Krause in \cite[universal~property~2.10]{krause}.

We follow the construction given by Adelman in \cite{adelman}, calling our resulting abelian category the \emph{Adelman category} $\Adel(\sA)$ of $\sA$. It is obtained as a factor category of the functor category $\sA^{\Delta_2}$. So the objects of $\Adel(\sA)$ are given as diagrams of the form $\big(\xymatrix@1{X_0\, \ar[r]^{x_0} &\, X_1 \, \ar[r]^{x_1} & \, X_2}\big)$, and the morphisms from $\big(\xymatrix@1{X_0\, \ar[r]^{x_0} &\, X_1\, \ar[r]^{x_1} &\, X_2}\big)$ to $\big(\xymatrix@1{Y_0\, \ar[r]^{y_0} &\, Y_1\, \ar[r]^{y_1} &\, Y_2}\big)$ are equivalence classes of commutative diagrams of the following form.
\begin{align*}
\xymatrix{
X_0 \ar[r]^{x_0} \ar[d]^{f_0} & X_1 \ar[r]^{x_1}\ar[d]^{f_1} & X_2\ar[d]^{f_2}\\
Y_0 \ar[r]^{y_0} & Y_1 \ar[r]^{y_1} & Y_2
}
\end{align*}
Such a diagram morphism represents the zero morphism if and only if there exist morphisms $s \colon X_1 \to Y_0$ and $t \colon X_2 \to Y_1$ in $\sA$ satisfying $s y_0 + x_1 t = f_1$. \\
We give an alternative description of the ideal in $\sA^{\Delta_2}$ that is factored out to obtain the Adelman category: the diagram morphisms representing zero are precisely those that factor through direct sums of objects of the forms $\big(\xymatrix@1{X\,\ar@{=}[r]&\,X\,\ar[r] &\,Y}\big)$ and $\big(\xymatrix@1{X\,\ar[r]&\,Y\,\ar@{=}[r] &\,Y}\big)$ in $\sA^{\Delta_2}$. Cf. remark \ref{rem:adelideal}.\\
Adelman gives explicit formulas for kernels and cokernels using only direct sums \cite[th~1.1]{adelman}. He introduces full subcategories of certain projective objects and certain injective objects to show that $\Adel(\sA)$ is abelian \cite[p.~101, l.~10]{adelman}, that it has enough projectives and injectives \cite[p.~108, l.~6]{adelman}, and that its projective and injective dimensions are at most two \cite[prop~1.3]{adelman}. These subcategories are also essential in his proof of the uniqueness in the universal property.
To construct the induced morphism in the universal property, he extends the given additive functor $F$ to an exact functor $\Adel(F)$ between the corresponding Adelman categories and then composes with a (generalised) homology functor \cite[below def~1.13]{adelman}.

\bigskip

We construct a kernel-cokernel-factorisation for an arbitrary morphism in $\Adel(\sA)$ and give an explicit formula for the induced morphism and its inverse. Cf. corollary \ref{kor:kernelcokernelfactorisation}.

We extend Adelman's construction, including the universal property, such that it also applies to transformations between additive functors. Cf. definition \ref{defn:adelfunctortransformation} and theorem \ref{thrm:universalproperty}.

\section*{Outline of the thesis}
\addcontentsline{toc}{section}{Outline of the thesis}

We summarise the required preliminaries in chapter \ref{ch:preliminaries}. Most of them are basic facts from homological algebra.

In chapter \ref{ch:factorcategories}, we define ideals in additive categories and give the definition and properties of factor categories. At the end of this chapter, we formulate the universal property of a factor category.

We study Adelman's construction in detail in chapter \ref{ch:adelmansconstruction}. In section \ref{sec:definitions}, we define the Adelman category of an additive category $\sA$ and give two descriptions of the ideal that is factored out. We see that the dual of the Adelman category of $\sA$ is isomorphic to the Adelman category of $\sA^{\op}$, which allows us to use duality arguments. We construct kernels and cokernels in section \ref{sec:kernelsandcokernels} and obtain a criteria for a morphism being a mono- or epimorphism. In section \ref{sec:adelmanabelian}, we give explicit formulas for the induced morphism of a kernel-cokernel-factorisation and its inverse and we conclude that the Adelman category is in fact abelian. We define subcategories of projective and injective objects in section~\ref{sec:projectivesandinjectives} and use them to prove that the Adelman category has enough projectives and injectives and that its projective and injective dimensions are at most two. Moreover, we see that the elements of $\sA$ become both projective and injective in $\Adel(\sA)$.
In section \ref{sec:additivefunctorsandtransformations}, we extend Adelman's construction to additive functors and transformations between them. 

The aim of chapter \ref{ch:universalproperty} is to prove the universal property of the Adelman category. At first, in section \ref{sec:abeliancase}, we prove some rather technical lemmata to construct the homology functor from $\Adel(\sB)$ to $\sB$ in the particular case of an abelian category $\sB$. We formulate the universal property of the Adelman category in the last section \ref{sec:universalproperty} and give some direct applications.

\bigskip

\bigskip

I would like to thank my advisor Matthias K\"unzer for an uncountable amount of very useful suggestions and for his continual support.

I also thank Theo B\"uhler for pointing out Adelman's construction.

\section*{Conventions}
\addcontentsline{toc}{section}{Conventions}
\label{conventions}

We assume the reader to be familiar with elementary category theory. An introduction to category theory can be found in \cite{schubert}. Some basic definitions and notations are given in the conventions below.

Suppose given categories $\sA$, $\sB$ and $\sC$.

\begin{enumerate}
\item 
All categories are supposed to be small with respect to a sufficiently big universe,
cf. \cite[\S3.2~and~\S3.3]{schubert}.

\item The set of objects in $\sA$ is denoted by $\Ob \sA$. The set of morphisms in $\sA$ is denoted by $\Mor \sA$. Given $A,B \in \Ob \sA$, the set of morphisms from $A$ to $B$ is denoted by $\Hom_{\sA}(A,B)$ and the set of isomorphisms from $A$ to $B$ is denoted by $\Hom_{\sA}^{\iso}(A,B)$. The identity morphism of $A \in \Ob \sA$ is denoted by $1_A$. We write $1:=1_A$ if unambiguous. Given $f \in \Hom_{\sA}^{\iso}(A,B)$, we denote its inverse by $f^{-1} \in \Hom_{\sA}^{\iso}(B,A)$.

%\item Given $A,B \in \Ob \sA$ we write $\Hom_\sA(A,B)$ for the set of morphisms from $A$ to $B$.

\item The composition of morphisms is written naturally:\\
$\big(\xymatrix@1{A\, \ar[r]^f&\,B\, \ar[r]^g& \,C}\big)=\big(\xymatrix@1{A \,\ar[r]^{fg}&\, C}\big) =\big(\xymatrix@1{A \,\ar[r]^{f\cdot g}& \,C}\big)$ in $\sA$.

\item The composition of functors is written traditionally: $\big(\xymatrix@1{\sA \,\ar[r]^F&\,\sB \,\ar[r]^G&  \,\sC}\big)=\big(\xymatrix@1{\sA \,\ar[r]^{G \circ F}& \,\sC}\big)$.

\item In diagrams, we sometimes denote monomorphisms by $\xymatrix@1@!0{\arm[r]&}$ and epimorphisms by $\xymatrix@1@!0{\are[r]&}$.

\item Given $A,B \in \Ob \sA$, we write $A \cong B$ if $A$ and $B$ are isomorphic in $\sA$.

\item The opposite category (or dual category) of $\sA$ is denoted by $\sA^{\op}$. Given $f \in \Mor \sA$, the corresponding morphism in $\sA^{\op}$ is denoted by $f^{\op}$. Cf. remark \ref{rem:dual}.

\item We call $\sA$ \emph{preadditive} if $\Hom_\sA(A,B)$ carries the structure of an abelian group for \linebreak $A,B \in \Ob \sA$, written additively, and if $f(g+g')h=fgh+fg'h$ holds for  \\ $\xymatrix@1{A\,\ar[r]^{f}&  \,B\,\ar@<2pt>[r]^{g} \ar@<-2pt>[r]_{g'}  &\,C\, \ar[r]^{h}  & \,D }$ in $\sA$.

The zero morphisms in a preadditive category $\sA$ are denoted by $0_{A,B} \in \Hom_{\sA}(A,B)$ for $A,B \in \Ob \sA$. We write $0_{A}:= 0_{A,A}$ and $0:=0_{A,B}$ if unambiguous.

\item The set of integers is denoted by $\Z$. The set of non-negative integers is denoted by $\N_0$.

\item Given $a,b \in \Z$, we write $[a,b]:= \{ z \in \Z \colon a \le z \le b\}$.

\item \label{conv:delta} For $n \in \N_0$, let $\Delta_n$ be the poset category of $[0,n]$ with the partial order inherited from $\Z$. This category has $n+1$ objects $0,1,2, \dots, n$ and morphisms $\xymatrix@1@C=4mm{i\, \ar[r] &\,j}$ for $i,j \in \Z$ with $i \le j$.

\item \label{conv:directsum} Suppose $\sA$ to be preadditive and suppose given $A,B \in \Ob \sA$. A \emph{direct sum} of $A$ and $B$ is an object $C \in \Ob \sA$ together with morphisms $\xymatrix@1{A\,\ar@<2pt>[r]^i   &\,C\,\ar@<2pt>[l]^{p} \ar@<-2pt>[r]_q&\,B\ar@<-2pt>[l]_{j}}$ in $\sA$ satisfying $ip=1_A$, $jq=1_B$ and $pi+qj=1_C$. \\
This is generalized to an arbitrary finite number of objects as follows.

Suppose given $n \in \N_0$ and $A_k \in \Ob \sA$ for $k \in [1,n]$. A direct sum of $A_1, \dots
, A_n$ is a tuple $\big( C,(i_k)_{k \in [1,n]},(p_k)_{k\in [1,n]} \big)$ with $C \in \Ob \sA$ and morphisms $\xymatrix@1{A_k \, \ar@<2pt>[r]^{i_k}   &\, C\ar@<2pt>[l]^{p_k}}$ in $\sA$ satisfying $i_k p_k = 1_{A_k}$, $i_k p_{\ell}=0_{A_k,A_\ell}$ and $\sum_{m=1}^n p_m i_m = 1_C$ for $k,\ell \in [1,n]$ with $k \neq \ell$. Cf. remark \ref{rem:directsumcompatible}.

Sometimes we abbreviate $C:=\big( C,(i_k)_{k \in [1,n]},(p_k)_{k\in [1,n]} \big)$ for such a direct sum.

We often use the following matrix notation for morphisms between direct sums.

Suppose given $n,m \in \N_0$, a direct sum $\big( C,(i_k)_{k \in [1,n]},(p_k)_{k\in [1,n]} \big)$ of $A_1, \dots, A_n$ in $\sA$ and a direct sum $\big( D,(j_k)_{k \in [1,m]},(q_k)_{k\in [1,m]} \big)$ of $B_1, \dots, B_m$ in $\sA$.

Any morphism $f \colon C \to D$ in $\sA$ can be written as $f=\sum_{k=1}^n \sum_{\ell=1}^m p_k f_{k\ell}j_\ell$ with unique morphisms $f_{k\ell}=i_k f q_\ell \colon A_k \to B_\ell$ for $k \in [1,n]$ and $\ell \in [1,m]$.
We write 
\begin{align*}
f=\big(f_{k\ell}\big)_{\substack{k \in [1,n],\\\ell \in [1,m]}} = \left(\begin{smallmatrix}f_{11}&{\textstyle\cdots}&f_{1m}\\\vspace{1.8mm}\vdots&\ddots&\vdots\\f_{n1}&{\textstyle\cdots}&f_{nm}\end{smallmatrix}\right).
\end{align*}
Omitted matrix entries are zero.

\item An object $A \in \Ob \sA$ is called a \emph{zero object} if for every $B \in \Ob \sA$, there exists a single morphism from $A$ to $B$ and there exists a single morphism from $B$ to $A$.

If $\sA$ is preadditive, then $A \in \Ob \sA$ is a zero object if and only if $1_A = 0_A$ holds.

\item We call $\sA$ \emph{additive} if $\sA$ is preadditive, if $\sA$ has a zero object and if for $A,B \in \Ob \sA$, there exists a direct sum of $A$ and $B$ in $\sA$. In an additive category direct sums of arbitrary finite length exist.

\item \label{conv:directsumchoice} Suppose $\sA$ to be additive. We choose a zero object $0_\sA$ and write $0:=0_\sA$ if unambiguous. We choose $0_{\sA^{\op}} = 0_{\sA}$, cf. remark \ref{rem:dual}~(c).

For $n \in \N_0$ and $A_1, \dots, A_n \in \Ob\sA$, we choose a direct sum 
\begin{align*}
\Bigg( \bigoplus_{k=1}^n A_k , \Big(i_{\ell}^{(A_k)_{k \in [1,n]}} \Big)_{\ell \in [1,n]} ,\Big(p_{\ell}^{(A_k)_{k \in [1,n]}} \Big)_{\ell \in [1,n]}  \Bigg).
\end{align*}
We sometimes write $A_1 \oplus \dots \oplus A_n := \bigoplus_{k=1}^n A_k$.

Note that $p_{\ell}^{(A_k)_{k \in [1,n]}} = \sm{0 \\ \vspace{1.8mm}\vdots \\ 0 \vspace{0.5mm}\\\vspace{0.5mm} 1 \\0 \\\vspace{1.8mm} \vdots \\ 0 }$ and $i_{\ell}^{(A_k)_{k \in [1,n]}} = \sm{0 &{\textstyle\cdots}& 0& 1& 0& {\textstyle \cdots}& 0}$ in matrix notation, where the ones are in the $\ell$th row resp. column for $\ell \in [1,n]$.

In functor categories we use the direct sums of the target category to choose the direct sums, cf. remark \ref{rem:functorcategory}~(f).

%For $A,B \in \Ob \sA$ we choose a direct sum $\xymatrix{A\ar@<2pt>[r]^-{i_{A,B}}   &{A \oplus B} \ar@<2pt>[l]^-{p_{A,B}} \ar@<-2pt>[r]_-{q_{A,B}} &B\ar@<-2pt>[l]_-{j_{A,B}}}$. We set $\big(A \oplus B \oplus C \big) := \big((A \oplus B) \oplus C\big)$ for $A,B,C \in \Ob \sA$. This is a direct sum of $A,B$ and $C$.

\item \label{conv:genfulladdsubcategory} Suppose $\sA$ to be additive. Suppose given $M \subseteq \Ob \sA$. The full subcategory $\langle M \rangle$ of $\sA$ defined by 
\begin{align*}
\Ob \langle M \rangle := \left\{Y \in \Ob \sA \colon Y \cong \bigoplus_{i=1}^\ell X_i \text{ for some } \ell \in \N_0 \text{ and some } X_i \in M \text{ for } i \in [1,\ell] \right\}
\end{align*}
 shall be the \emph{full additive subcategory of $\sA$ generated by} $M$.

\item Suppose $\sA$ to be preadditive. Suppose given $f \colon A \to B$ in $\sA$. A \emph{kernel} of $f$ is a morphism $k \colon K \to A$ in $\sA$ with $kf=0$ such that the following factorisation property holds.

Given $g\colon X \to A$ with $gf=0$, there exists a unique morphism $u \colon X \to K$ such that $uk=g$ holds.

Sometimes we refer to $K$ as the kernel of $f$.

The dual of a kernel is called a \emph{cokernel}.

Note that kernels are monomorphic and that cokernels are epimorphic.

\item \label{conv:kernelcokernelfactorisation} Suppose $\sA$ to be preadditive. Suppose given $f \colon A \to B$ in $\sA$, a kernel $k \colon K \to A$ of $f$, a cokernel $c \colon B \to C$ of $f$, a cokernel $p \colon A \to J$ of $k$ and a kernel $i \colon I \to B$ of $c$.

There exists a unique morphism $\hat f \colon J \to I$ such that the following diagram commutes.
\begin{align*}
\xymatrix{
K \arm[r]^k & A \are[dr]_{p} \ar[rrr]^f & & & B \are[r]^*+<0.8mm,0.8mm>{\scriptstyle c} &C
\\
&& J \ar[r]^{\hat f} & I \arm[ru]_i
}
\end{align*}
We call such a diagram a \emph{kernel-cokernel-factorisation} of $f$, and we call $\hat f$ the induced morphism of the kernel-cokernel-factorisation. Cf. remark \ref{rem:kernelcokernelfactorisation}.

\item We call $\sA$ \emph{abelian} if $\sA$ is additive, if each morphism in $\sA$ has a kernel and a cokernel and if for each morphism $f$ in $\sA$, the induced morphism $\hat f$ of each kernel-cokernel-factorisation of $f$ is an isomorphism.

\item Suppose $\sA$ and $\sB$ to be preadditive. A functor $F \colon \sA \to \sB$ is called \emph{additive} if $F(f+g)=F(f)+F(g)$ holds for $\xymatrix@1{X\,\ar@<2pt>[r]^f \ar@<-2pt>[r]_g&\,Y}$ in $\sA$, cf. proposition \ref{prop:additivefunctor}.

\item \label{conv:transformation} Suppose given functors $F,G \colon \sA \to \sB$.\\ A tuple $\alpha=(\alpha_X)_{X \in \Ob \sA}$, where $\alpha_X \colon F(X) \to G(X)$ is a morphism in $\sB$  for $X \in \Ob \sA$, is called \emph{natural} if  $ F(f) \alpha_Y = \alpha_X G(f)$ holds for $\xymatrix@1{X\, \ar[r]^f&\,Y}$ in $\sA$. 
%\begin{align*}
%\xymatrix{F(A) \ar[r]^{\alpha_{A}} \ar[d]_{ F(a)}&G(A) %\ar[d]^{G(a)}\\ F(B) \ar[r]_{\alpha_{B}}& G(B)}
%\end{align*}
A natural tuple is also called a \emph{transformation}. We write $\alpha \colon F \to G$, $\alpha \colon F \Rightarrow G$ or $\xymatrix@1{
\sA\, \ar@/^/[r]^F="F"\ar@/_/[r]_G="G"  &\,\sB 
\ar@2"F"+<0mm,-2.6mm>;"G"+<0mm,2.4mm>_\alpha
} $.

A transformation $\alpha \colon F \Rightarrow G$ is called an \emph{isotransformation} if and only if $\alpha_X$ is an isomorphism in $\sB$ for $X \in \Ob \sA$. The isotransformations from $F$ to $G$ are precisely the elements of $\Hom^{\iso}_{\sB^{\sA}}(F,G)$, cf. convention \ref{conv:functorcategory}.

Suppose given $\xymatrix@1{ \sA  \,\ar@/^5mm/[rr]^F="F"   \ar[rr]|{\phantom{\rule{0.4mm}{0mm}}G\phantom{\rule{0.4mm}{0mm}}}="G" \ar@/_5mm/[rr]_H="H" & &\,\sB\, \ar@/^/[r]^K="K" \ar@/_/[r]_L="L" &\, \sC 
\ar@2"F"+<0mm,-2.6mm>;"G"+<0mm,1.4mm>_\alpha
\ar@2"G"+<0mm,-1.5mm>;"H"+<0mm,2.4mm>_\beta
\ar@2"K"+<0mm,-2.6mm>;"L"+<0mm,2.4mm>_\gamma}$.

We have the transformation $\alpha  \beta \colon F \Rightarrow H$ with $(\alpha  \beta)_X := \alpha_X \beta_X$ for $X \in \Ob \sA$.

We have the transformation $\gamma \star \alpha \colon K \circ F \Rightarrow L \circ G$ with $(\gamma \star \alpha)_X := K(\alpha_X) \gamma_{G(X)} = \gamma_{F(X)} L(\alpha_X)$ for $X \in \Ob \sA$.
\begin{align*}
\xymatrix{
(K \circ F)(X) \ar[r]^{\gamma_{F(X)}} \ar[d]_{K(\alpha_X)} & (L\circ F)(X) \ar[d]^{L(\alpha_X)}\\
(K \circ G)(X) \ar[r]_{\gamma_{G(X)}} & (L \circ G)(X)
}
\end{align*}
We have the transformation $1_F \colon F \Rightarrow F$ with $(1_F)_X:=1_{F(X)}$ for $X \in \Ob \sA$.

We set $\gamma \star F := \gamma \star 1_F$. 

Cf. lemma \ref{lem:transformations}.

\item \label{conv:functorcategory} Let $\sA^{\sC}$ denote the \emph{functor category} whose objects are the functors from $\sC$ to $\sA$ and whose morphisms are the transformations between such functors. Cf. remark \ref{rem:functorcategory}.

\item The \emph{identity functor} of $\sA$ shall be denoted by $1_{\sA} \colon \sA \to \sA$.

\item A functor $F  \colon \sA \to \sB$ is called an \emph{isomorphism of categories} if there exists a functor $G \colon \sB \to \sA$ with $F \circ G = 1_{\sB}$ and $G \circ F = 1_{\sA}$. In this case, we write $F^{-1}:=G$.

\item Suppose given $\xymatrix@1{A \,\ar[r]^f&\, B}$ in $\sA$. The morphism $f$ is called a \emph{retraction} if there exists a morphism $g \colon B\to A$ with $gf=1_Y$. The dual of a retraction is called a \emph{coretraction}.

\item An object $P \in \Ob \sA$ is called \emph{projective} if for  each morphism $g \colon P \to Y$ and each epimorphism $f \colon X \to Y$, there exists $h \colon P \to X$ with $hf=g$.
\begin{align*}
\xymatrix{
&P \ar[dl]_h \ar[d]^g \\
X \are[r]_*+<0.9mm,0.9mm>{\scriptstyle f} &  Y
}
\end{align*}
 The dual of a projective object is called an \emph{injective} object.

\item Suppose $\sA$ to be abelian. Suppose given $\xymatrix@1{X\, \ar[r]^f &\,Y}$ in $\sA$. A diagram $\xymatrix@1{X\, \are[r]^*+<0.7mm,0.7mm>{\scriptstyle p} & \,I\,\arm[r]^i & \,Y}$ is called an \emph{image} of $f$ if $p$ is epimorphic, if $i$ is monomorphic and if $pi=f$ holds.

\item Suppose $\sA$ to be abelian. A sequence $\xymatrix@1{X\, \ar[r]^f & \,Y \,\ar[r]^g &\,Z}$ in $\sA$ is called \emph{left-exact} if $f$ is a kernel of $g$ and \emph{right-exact} if $g$ is a cokernel of $f$.
Such a sequence is called \emph{short exact} if it is left-exact and right-exact.

Suppose given $n \in \N_0$ and a sequence $\xymatrix@1{A_0 \,\ar[r]^{a_0} &\, A_1\, \ar[r]^{a_1} & \,A_2\, \ar[r]^{a_2}&\,\cdots\, \ar[r]^-{a_{n-2}}&\, A_{n-1} \,\ar[r]^{a_{n-1}} &\, A_n}$ in $\sA$.
The sequence is called \emph{exact} if for each choice of images
\begin{align*}
\big(\xymatrix{A_k \ar[r]^-{a_k} & A_{k+1} }\big) = \big(\xymatrix{A_k \are[r]^*+<0.8mm,0.8mm>{\scriptstyle p_k} & I_{k} \armfl{0.41}[r]^-{i_k} & A_{k+1}}\big)
\end{align*} 
for $k\in[0,n-1]$, the sequences $\xymatrix@1{I_k \,\ar[r]^-{i_k}&\,A_{k+1}\, \ar[r]^-{p_{k+1}} & \,I_{k+1}}$ are short exact for $k\in[0,n-2]$.

Cf. remark \ref{rem:exactsequence}.

\item Suppose $\sA$ and $\sB$ to be abelian. \\
An additive functor $F \colon \sA \to \sB$ is called \emph{left-exact} if for each left-exact sequence $\xymatrix@1{X \,\arm[r]^f & \,Y\, \ar[r]^g &\,Z}$ in $\sA$, the sequence $\xymatrix@1{F(X)\, \ar[r]^-{F(f)} &\, F(Y)\, \ar[r]^-{F(g)} &\,F(Z)}$ is also left-exact.

An additive functor $F \colon \sA \to \sB$ is called \emph{right-exact} if for each right-exact sequence\\ $\xymatrix@1{X \,\ar[r]^f & \,Y\, \are[r]^*+<0.8mm,0.8mm>{\scriptstyle g} &\,Z}$ in $\sA$, the sequence $\xymatrix@1{F(X)\, \ar[r]^-{F(f)} &\, F(Y) \,\ar[r]^-{F(g)} &\,F(Z)}$ is also right-exact.

An additive functor $F \colon \sA \to \sB$ is called $\emph{exact}$ if it is left-exact and right-exact.

\item Suppose given a subcategory $\sA' \subseteq \sA$ and a subcategory $\sB' \subseteq \sB$.
Suppose given a functor $F \colon \sA \to \sB$ with $F(f) \in \Mor \sB'$ for $f \in \Mor \sA'$.

Let $F|_{\sA'}^{\sB'} \colon \sA' \to \sB'$ be defined by $F|_{\sA'}^{\sB'}(A)=F(A)$ for $A \in \Ob \sA'$ and $F|_{\sA'}^{\sB'}(f)=F(f)$ for $f \in \Mor \sA'$. 

Let $F|_{\sA'} := F|_{\sA'}^{\sB}$ and let $F|^{\sB'} := F|_{\sA}^{\sB'}$.
\end{enumerate}

\chapter{Preliminaries}
\thispagestyle{empty}
\label{ch:preliminaries}

\section{A remark on duality}

\begin{rem} \label{rem:dual}
Suppose given a category $\sA$. Some facts about the opposite category $\sA^{\op}$ are listed below.
\begin{itemize}
\item[(a)] We have $(\sA^{\op})^{\op} = \sA$.
	\item[(b)] If $\sA$ is preadditive, then $\sA^{\op}$ is preadditive with addition $f^{\op} + g^{\op} = (f+g)^{\op}$ for $\xymatrix@1{A\,\ar@<2pt>[r]^{f} \ar@<-2pt>[r]_{g}  &\,B }$ in $\sA$.
	\item[(c)] An object $A \in \Ob \sA$ is a zero object in $\sA$ if and only if it is a zero object in $\sA^{\op}$.
	\item[(d)] Suppose $\sA$ to be preadditive. Suppose given  $n \in \N_0$, $A_k \in \Ob \sA$ for $k \in [1,n]$ and a tuple $\big( C,(i_k)_{k \in [1,n]},(p_k)_{k\in [1,n]} \big)$ with $C \in \Ob \sA$ and morphisms $\xymatrix@1{A_k\, \ar@<2pt>[r]^{i_k}   &\,C\ar@<2pt>[l]^{p_k}}$ in $\sA$ for $k\in [1,n]$. The tuple $\big( C,(i_k)_{k \in [1,n]},(p_k)_{k\in [1,n]} \big)$ is a direct sum of $A_1, \dots, A_n$ in $\sA$ if and only if the tuple $\big( C,(p_k^{\op})_{k\in [1,n]},(i_k^{\op})_{k \in [1,n]} \big)$ is a direct sum of $A_1, \dots, A_n$ in $\sA^{\op}$.
\item[(e)] If $\sA$ is additive, then $\sA^{\op}$ is additive. 
\item[(f)] If $\sA$ is abelian, then $\sA^{\op}$ is abelian. 
\item[(g)] Suppose given a category $\sB$ and a functor $F \colon \sA \to \sB$. The opposite functor \linebreak $F^{\op} \colon \sA^{\op} \to \sB^{\op}$ is defined by $F^{\op}(A) = F(A)$ for $A \in \Ob \sA$ and $F^{\op}(f^{\op})=F(f)^{\op}$ for $f \in \Mor \sA$. We have $(F^{\op})^{\op} = F$.

Suppose $\sA$ and $\sB$ to be preadditive. The functor $F$ is additive if and only if $F^{\op}$ is additive.

Suppose $\sA$ and $\sB$ to be abelian. The functor $F$ is left-exact if and only if $F^{\op}$ is right-exact. \\ The functor $F$ is exact if and only if $F^{\op}$ is exact.
\end{itemize}

\end{rem}

\section{Additive lemmata}

\begin{rem}
\label{rem:directsumcompatible}
Suppose given a preadditive category $\sA$.

The two definitions of direct sums given in the conventions are compatible:\\
Given a diagram $\xymatrix@1{A\,\ar@<2pt>[r]^i   &\,C\,\ar@<2pt>[l]^{p} \ar@<-2pt>[r]_q&\,B\ar@<-2pt>[l]_{j}}$ in $\sA$ satisfying $ip=1_A$, $jq=1_B$ and $pi+qj=1_C$, we have $iq=0$ and $jp=0$. Cf. convention \ref{conv:directsum}.
\end{rem}

\begin{proof}
Indeed we have
\begin{align*}
0 = iq-iq = i(pi+qj)q - iq = ipiq+iqjq-iq= iq + iq - iq = iq
\end{align*}
and similarly 
\begin{align*}
0=jp - jp = j(pi+qj)p-jp = jpip+jqjp - jp =  jp +jp -jp=jp.
\end{align*}
\end{proof}

\begin{prop} Suppose given a preadditive category $\sA$.

Suppose given $n \in \N_0$ and a direct sum $C=\big( C,(i_k)_{k \in [1,n]},(p_k)_{k\in [1,n]} \big)$ of $A_1, \dots, A_n$ in $\sA$, cf. convention \ref{conv:directsum}.

Then $C$ satisfies the universal properties of a product and a coproduct:
\begin{itemize}
	\item[(a)] Given morphisms $f_k \colon X \to A_k$ for $k \in [1,n]$, there exists a unique morphism $g \colon X \to C$ satisfying $f_k=gp_k$ for $k \in [1,n]$.
\begin{align*}
\xymatrix{&A_k\\X\ar[ur]^{f_k} \ar[r]_g & C \ar[u]_{p_k}}
\end{align*}
\item[(b)] Given morphisms $f_k \colon A_k \to X$ for $k \in [1,n]$, there exists a unique morphism $g \colon C \to X$ satisfying $f_k=i_kg$ for $k \in [1,n]$.
\begin{align*}
\xymatrix{A_k\ar[dr]^{f_k} \ar[d]_{i_k} &\\ C \ar[r]_{g}&X}
\end{align*}
\end{itemize}

\end{prop}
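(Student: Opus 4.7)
The plan is to prove (a) directly by giving an explicit formula for $g$ in terms of the $f_k$ and the structural morphisms of the direct sum, and then deduce (b) by duality via remark \ref{rem:dual}~(d).

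For (a), given $f_k \colon X \to A_k$ for $k\in[1,n]$, I would set $g := \sum_{k=1}^n f_k i_k \colon X \to C$. Using the direct sum relations $i_k p_\ell = 0$ for $k\neq\ell$ and $i_k p_k = 1_{A_k}$ together with bilinearity of composition (which holds since $\sA$ is preadditive), one computes
\begin{align*}
g p_\ell = \sum_{k=1}^n f_k i_k p_\ell = f_\ell \cdot 1_{A_\ell} = f_\ell
\end{align*}
for each $\ell\in[1,n]$, giving existence. For uniqueness, suppose $g' \colon X\to C$ also satisfies $g' p_k = f_k$ for all $k$. Then, using $\sum_{m=1}^n p_m i_m = 1_C$,
\begin{align*}
g' = g' \cdot 1_C = g' \sum_{m=1}^n p_m i_m = \sum_{m=1}^n (g' p_m) i_m = \sum_{m=1}^n f_m i_m = g.
\end{align*}

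For (b), by remark \ref{rem:dual}~(d), the tuple $\big( C, (p_k^{\op})_{k\in[1,n]}, (i_k^{\op})_{k\in[1,n]} \big)$ is a direct sum of $A_1,\dots,A_n$ in $\sA^{\op}$. Given $f_k \colon A_k \to X$ in $\sA$, applying part (a) in $\sA^{\op}$ to the morphisms $f_k^{\op} \colon X \to A_k$ yields a unique $g^{\op} \colon X \to C$ in $\sA^{\op}$ with $g^{\op} i_k^{\op} = f_k^{\op}$ for all $k$, i.e.\ a unique $g\colon C\to X$ in $\sA$ with $i_k g = f_k$ for all $k$.

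I do not anticipate any serious obstacle; the only thing to be a little careful about is that the preadditive assumption is what makes the formula $g = \sum_k f_k i_k$ (and the rearrangements above) legal, since this uses bilinearity of composition. The duality step relies on the fact, recorded in remark \ref{rem:dual}~(d), that the roles of the injections and projections are swapped when passing to $\sA^{\op}$, so (b) is really just (a) read in the opposite category.
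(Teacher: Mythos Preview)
Your proof is correct and essentially identical to the paper's: both define $g=\sum_k f_k i_k$, verify $gp_\ell=f_\ell$ using $i_kp_\ell=\delta_{k\ell}$, and obtain uniqueness from $g'=g'\sum_m p_m i_m=\sum_m f_m i_m$; the paper simply presents uniqueness before existence and handles (b) with the terse phrase ``This is dual to (a)'' where you spell out the appeal to remark~\ref{rem:dual}~(d).
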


\begin{proof}
Ad (a).
If $g \colon X \to C$ is a morphism with $f_k=gp_k$ for $k \in [1,n]$, we necessarily have $g=g(\sum_{\ell=1}^n p_\ell i_\ell)=\sum_{\ell=1}^n gp_\ell i_\ell  = \sum_{\ell=1}^n f_\ell i_\ell$, so $g$ is completely determined and therefore unique.

Let $g:=\sum_{\ell=1}^n f_\ell i_\ell$. We have $g p_k = (\sum_{\ell=1}^n f_\ell i_\ell)p_k = \sum_{\ell=1}^n f_\ell i_\ell p_k = f_k$ for $k \in [1,n]$.

Ad (b). This is dual to (a).
\end{proof}

\begin{prop} \label{prop:additivefunctor}
Suppose given additive categories $\sA$ and $\sB$. Suppose given an additive functor $F \colon \sA \to \sB$.

We have $F(0_{\sA})\cong 0_{\sB}$ in  $\sB$.

Suppose given $A,B \in \Ob \sA$. Let $p := p_1^{(A,B)}$, $q :=p_2^{(A,B)}$,  $i := i_1^{(A,B)}$ and  $j := i_2^{(A,B)}$, cf. convention \ref{conv:directsumchoice}. So $\xymatrix@1{A\,\ar@<2pt>[r]^-i   &\,A \oplus B\, \ar@<2pt>[l]^-{p} \ar@<-2pt>[r]_-q&\,B\ar@<-2pt>[l]_-{j}}$ is a direct sum of $A$ and $B$ in $\sA$.

The morphism $ \left(\begin{smallmatrix}F(p) &F(q)\end{smallmatrix}\right) \colon F(A\oplus B)\to F(A) \oplus F(B)$ is an isomorphism in $\sB$ with inverse $\left(\begin{smallmatrix}F(i)\\F(j)\end{smallmatrix}\right)$.
\end{prop}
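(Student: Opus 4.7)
The plan is to prove the two statements separately, both as direct consequences of additivity and functoriality of $F$.

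First, I would show $F(0_\sA) \cong 0_\sB$ by using the characterisation of zero objects in preadditive categories, namely that $A$ is a zero object iff $1_A = 0_A$. Additivity of $F$ forces $F(0_{X,Y}) = F(0_{X,Y}+0_{X,Y}) = F(0_{X,Y}) + F(0_{X,Y})$, hence $F$ sends zero morphisms to zero morphisms. Together with $F(1_X) = 1_{F(X)}$, applying $F$ to the identity $1_{0_\sA} = 0_{0_\sA}$ yields $1_{F(0_\sA)} = 0_{F(0_\sA)}$, so $F(0_\sA)$ is a zero object in $\sB$ and therefore isomorphic to $0_\sB$.

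Second, I would verify the isomorphism by computing both matrix products. Convention~\ref{conv:directsum} provides the defining relations $ip = 1_A$, $jq = 1_B$, $iq = 0$, $jp = 0$, $pi + qj = 1_{A \oplus B}$. Using functoriality and additivity of $F$, I obtain
\begin{align*}
\left(\begin{smallmatrix}F(p)&F(q)\end{smallmatrix}\right) \left(\begin{smallmatrix}F(i)\\F(j)\end{smallmatrix}\right) = F(p)F(i)+F(q)F(j) = F(pi+qj) = F(1_{A\oplus B}) = 1_{F(A\oplus B)},
\end{align*}
and, by the usual matrix product,
\begin{align*}
\left(\begin{smallmatrix}F(i)\\F(j)\end{smallmatrix}\right) \left(\begin{smallmatrix}F(p)&F(q)\end{smallmatrix}\right) = \left(\begin{smallmatrix}F(ip)&F(iq)\\F(jp)&F(jq)\end{smallmatrix}\right) = \left(\begin{smallmatrix}1_{F(A)}&0\\0&1_{F(B)}\end{smallmatrix}\right) = 1_{F(A)\oplus F(B)}.
\end{align*}

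I do not anticipate a real obstacle: the entire argument is a one-line consequence of additivity and functoriality of $F$ together with the direct sum relations. The only small subtlety worth stressing is that additivity forces $F$ to preserve zero morphisms, a fact implicitly used both in the zero-object argument and in handling the off-diagonal entries of the second matrix product.
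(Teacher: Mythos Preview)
Your proposal is correct and matches the paper's proof essentially line for line: the paper also first shows $F$ preserves zero morphisms (via $F(0)=F(0+0)-F(0)=0$) to conclude $1_{F(0_\sA)}=0$, and then computes the two matrix products exactly as you do, invoking remark~\ref{rem:directsumcompatible} for the relations $iq=0$ and $jp=0$.
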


\begin{proof}
For $A \in \Ob \sA$ and $0=0_A$, we have $F(0) = F(0) + F(0) - F(0) = F(0+0) - F(0) = F(0)-F(0)=0$. In particular, we have $1_{F(0_{\sA})}=F(1_{0_{\sA}})=F(0)=0$ and therefore $F(0_{\sA})\cong 0_{\sB}$ holds.

Calculating
\begin{align*}
\left(\begin{smallmatrix}F(p) &F(q)\end{smallmatrix}\right) \left(\begin{smallmatrix}F(i)\\F(j)\end{smallmatrix}\right) =  F(p) F( i) + F(q) F(j) =F(p i + q j) = F(1)=1
\end{align*}
and
\begin{align*}
\left(\begin{smallmatrix}F(i)\\F(j)\end{smallmatrix}\right)\left(\begin{smallmatrix}F(p) &F(q)\end{smallmatrix}\right) = \left(\begin{smallmatrix}F(i)F(p) &F(i)F(q)\\F(j) F(p)&F(j) F(q)\end{smallmatrix}\right) = \left(\begin{smallmatrix}F(i p) &F(i q)\\F(j p)&F(j q)\end{smallmatrix}\right)=  \left(\begin{smallmatrix}F(1) &F(0)\\F(0)&F(1)\end{smallmatrix}\right) =\left(\begin{smallmatrix}1 &0\\0&1\end{smallmatrix}\right)
\end{align*}
proves the second claim, cf. remark \ref{rem:directsumcompatible}.
\end{proof}

\begin{rem} \label{rem:functorcategory}
Suppose given a category $\sC$ and an additive category $\sA$. Some facts about the functor category $\sA^{\sC}$ are listed below. Cf. convention \ref{conv:functorcategory}.

\begin{itemize}
	\item[(a)] The category $\sA^{\sC}$ is additive.
	\item[(b)] Suppose given $\xymatrix@1{F\, \ar[r]^{\alpha} &\,G\,\ar[r]^{\beta} & \,H}$ in $\sA^{\sC}$. \\ The composite $\alpha \beta$ is given by $\alpha\beta = (\alpha_X \beta_X)_{X \in \Ob \sC}$. %We have $(\alpha\beta)_A = \alpha_A \beta_A$ for $A \in \Ob \sA$.
	\item[(c)] Suppose given $F \in \Ob(\sA^{\sC})$. The identity morphism $1_F$ is given by $1_F = (1_{F(X)})_{X \in \Ob \sC}$. 
	\item[(d)] Suppose given $\xymatrix@1{ F \,\ar@<2pt>[r]^{\alpha} \ar@<-2pt>[r]_{\beta} & \,G}$ in $\sA^{\sC}$. The sum $\alpha+\beta$ is given by $\alpha +\beta = (\alpha_X + \beta_X)_{X \in \Ob \sC}$.
	\item[(e)] Suppose given $F,G \in \Ob (\sA^{\sC})$. \\
	The zero morphism $0_{F,G}$ is given by $0_{F,G} = (0_{F(X),G(X)})_{X \in \Ob \sC}$.
	\item[(f)] Suppose given $n \in \N_0$ and $F_k \in \Ob (\sA^{\sC})$ for $k \in [1,n]$. We choose the direct sum $\Bigg( \bigoplus\limits_{k=1}^n F_k , \Big(i_{\ell}^{(F_k)_{k \in [1,n]}} \Big)_{\ell \in [1,n]} ,\Big(p_{\ell}^{(F_k)_{k \in [1,n]}} \Big)_{\ell \in [1,n]}  \Bigg)$ of $F_1, \dots, F_n$, where 
	\begin{align*}
	\left( \bigoplus_{k=1}^n F_k \right) \big(\xymatrix{X \ar[r]^f &Y} \big) =
 \raisebox{6mm}{$\left( \rule{0cm}{13mm} \right.$}
 \xymatrix{\bigoplus\limits_{k=1}^n F_k(X) \ar[rrrr]^{\sm{F_1(f) & & & \\ &F_2(f) & & \\ & & \ddots & \\&&&F_n(f) }}
	 & & & & \bigoplus\limits_{k=1}^n F_k(Y)} 
 \raisebox{6mm}{$\left. \rule{0cm}{13mm} \right)$}
	\end{align*}
	for $\xymatrix@1{X \,\ar[r]^f & \,Y}$ in $\sC$,
\begin{align*}
i_{\ell}^{(F_k)_{k \in [1,n]}} = \Big( i_{\ell}^{(F_k(X))_{k \in [1,n]}} \Big)_{X \in \Ob \sC}
\end{align*}
for $\ell \in [1,n]$ and
\begin{align*}
p_{\ell}^{(F_k)_{k \in [1,n]}} = \Big( p_{\ell}^{(F_k(X))_{k \in [1,n]}} \Big)_{X \in \Ob \sC}
\end{align*}
for $\ell \in [1,n]$.

Cf. conventions \ref{conv:directsum} and \ref{conv:directsumchoice}.
\end{itemize}
\end{rem}

\begin{lem}
\label{lem:transformations}
Suppose given $\xymatrix@1{ \sA\,  \ar@/^5mm/[rr]^F="F"   \ar[rr]|{\phantom{\rule{0.4mm}{0mm}}G\phantom{\rule{0.4mm}{0mm}}}="G" \ar@/_5mm/[rr]_H="H" & &
\,\sB\, \ar@/^5mm/[rr]^K="K"   \ar[rr]|{\phantom{\rule{0.4mm}{0mm}}L\phantom{\rule{0.4mm}{0mm}}}="L" \ar@/_5mm/[rr]_M="M" & & \,\sC\, \ar@/^/[r]^N="N" \ar@/_/[r]_P="P" & \,\sD 
\ar@2"F"+<0mm,-2.6mm>;"G"+<0mm,1.4mm>_\alpha
\ar@2"G"+<0mm,-1.5mm>;"H"+<0mm,2.4mm>_\beta
\ar@2"K"+<0mm,-2.6mm>;"L"+<0mm,1.4mm>_\gamma
\ar@2"L"+<0mm,-1.5mm>;"M"+<0mm,2.4mm>_\delta
\ar@2"N"+<0mm,-2.6mm>;"P"+<0mm,2.4mm>_\varepsilon}$.

The equations $\varepsilon \star (\gamma \star \alpha) = (\varepsilon \star \gamma) \star \alpha$, $1_{K \circ F} = 1_K \star 1_F$ and $(\gamma \delta) \star(\alpha \beta) = (\gamma \star \alpha)(\delta \star \beta)$ hold.
\end{lem}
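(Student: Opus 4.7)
The plan is to verify each of the three equations pointwise at an arbitrary object $X \in \Ob \sA$, unfolding the definitions of horizontal and vertical composition from convention \ref{conv:transformation}. The defining formula $(\gamma \star \alpha)_X = K(\alpha_X)\gamma_{G(X)} = \gamma_{F(X)} L(\alpha_X)$ gives two equivalent expressions, and I will choose whichever is more convenient on each side.

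For the associativity $\varepsilon \star (\gamma \star \alpha) = (\varepsilon \star \gamma) \star \alpha$, I would first note that both transformations have source $N \circ K \circ F$ and target $P \circ L \circ G$, so the claim makes sense. Evaluating the left-hand side at $X$ gives $N\bigl((\gamma \star \alpha)_X\bigr)\, \varepsilon_{(L \circ G)(X)} = N\bigl(K(\alpha_X)\gamma_{G(X)}\bigr) \varepsilon_{L(G(X))} = N(K(\alpha_X))\, N(\gamma_{G(X)})\, \varepsilon_{L(G(X))}$, using functoriality of $N$. Evaluating the right-hand side at $X$ gives $(N\circ K)(\alpha_X)\,(\varepsilon \star \gamma)_{G(X)} = N(K(\alpha_X))\, N(\gamma_{G(X)})\, \varepsilon_{L(G(X))}$. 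The two agree. The identity law $1_{K \circ F} = 1_K \star 1_F$ is pure unfolding: $(1_K \star 1_F)_X = K\bigl((1_F)_X\bigr)\,(1_K)_{F(X)} = K(1_{F(X)})\, 1_{K(F(X))} = 1_{K(F(X))} = 1_{(K\circ F)(X)} = (1_{K\circ F})_X$.

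For the interchange law $(\gamma\delta)\star(\alpha\beta) = (\gamma \star \alpha)(\delta \star \beta)$, I evaluate both sides at $X$. Using the second form of $\star$ on the left side gives $((\gamma\delta)\star(\alpha\beta))_X = (\gamma\delta)_{F(X)}\, M\bigl((\alpha\beta)_X\bigr) = \gamma_{F(X)}\, \delta_{F(X)}\, M(\alpha_X)\, M(\beta_X)$, using functoriality of $M$ and the componentwise formula for vertical composition from remark \ref{rem:functorcategory}(b). On the right side, using the second form for $(\gamma \star \alpha)_X$ and the first for $(\delta \star \beta)_X$, I get $\gamma_{F(X)}\, L(\alpha_X)\, \delta_{G(X)}\, M(\beta_X)$. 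The key step is now the naturality of $\delta \colon L \Rightarrow M$ applied to the morphism $\alpha_X \colon F(X) \to G(X)$, which yields $L(\alpha_X)\,\delta_{G(X)} = \delta_{F(X)}\, M(\alpha_X)$. Substituting this gives exactly the left-hand expression.

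The identity and associativity laws are essentially bookkeeping, and the only genuinely nontrivial step — which I expect to be the main obstacle a reader should notice — is the invocation of the naturality of $\delta$ in the interchange law. Everything else is functoriality of $N$ and $M$, plus careful unfolding of the two equivalent formulas for $\star$.
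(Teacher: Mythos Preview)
Your proof is correct and follows essentially the same approach as the paper's: pointwise evaluation at $X$, unfolding the definitions, and using functoriality plus a single naturality step for the interchange law. The only cosmetic difference is that in the interchange law the paper uses the first form of $\star$ and invokes naturality of $\gamma$ at $\beta_X$, whereas you use the second form and invoke naturality of $\delta$ at $\alpha_X$; these are symmetric and equally valid.
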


\begin{proof}
Suppose given $X \in \Ob \sA$. We have
\begin{align*}
(\varepsilon \star(\gamma \star \alpha))_X &= N( (\gamma \star \alpha)_X) \varepsilon_{(L\circ G)(X)} = N(K(\alpha_X)\gamma_{G(X)}) \varepsilon_{L(G(X))}\\
& = N(K(\alpha_X)) N(\gamma_{G(X)}) \varepsilon_{L(G(X))} = (N \circ K)(\alpha_X) (\varepsilon \star \gamma)_{G(X)}\\
&=((\varepsilon\star \gamma)\star \alpha)_X,
\end{align*}
\begin{align*}
(1_{K \circ F})_X = 1_{(K\circ F)(X)} = K(1_{F(X)}) (1_K)_{F(X)}= (1_K \star 1_F)_X
\end{align*}
and
\begin{align*}
((\gamma \delta) \star (\alpha \beta))_X &= K( (\alpha\beta)_X) (\gamma \delta)_{H(X)} = K(\alpha_X)K(\beta_X) \gamma_{H(X)} \delta_{H(X)} \\
&=K(\alpha_X) \gamma_{G(X)} L(\beta_X) \delta_{H(X)} = (\gamma \star \alpha)_X (\delta\star \beta)_X\\
&=((\gamma \star \alpha)(\delta \star \beta))_X.
\end{align*}
\end{proof}

\section{Abelian lemmata}

\begin{lem} \label{lem:inducedmorphisms}
Suppose given a preadditive category $\sA$ and the following commutative diagram in $\sA$.
\begin{align*}
\xymatrix{
A \ar[r]^f \ar[d]^g & B \ar[d]^h\\
A' \ar[r]^{f'} & B'
}
\end{align*}
\begin{itemize}
	\item[(a)] Suppose given a kernel $k\colon K \to A$ of $f$ and a kernel $k'\colon K' \to A'$ of $f'$.
	
	There exists a unique morphism $u \colon K \to K'$ such that $kg=uk'$ holds. The morphism $u$ is called the induced morphism between the kernels.
	\begin{align*}
\xymatrix{
 K \arm[r]^k  \ar[d]^u & A \ar[r]^f \ar[d]^g & B \ar[d]^h\\
 K' \arm[r]^{k'}  & A' \ar[r]^{f'} & B'
}
\end{align*}
	If $g$ and $h$ are isomorphisms, then $u$ is an isomorphism too.
	\item[(b)] Suppose given a cokernel $c \colon B \to C$ of $f$ and cokernel $c' \colon B' \to C'$ of $f'$.
	
	There exists a unique morphism $u \colon C \to C'$ such that $c u = h c'$ holds. The morphism $u$ is called the induced morphism between the cokernels.
	\begin{align*}
\xymatrix{
A \ar[r]^f \ar[d]^g & B \ar[d]^h \are[r]^*+<0.8mm,0.8mm>{\scriptstyle c} & C \ar[d]^u\\
A' \ar[r]^{f'} & B' \are[r]^*+<0.8mm,0.8mm>{\scriptstyle c'} & C'
}
\end{align*}
	If $g$ and $h$ are isomorphisms, then $u$ is an isomorphism too.
\end{itemize}
\end{lem}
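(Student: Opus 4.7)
The plan is to prove part (a) by direct application of the universal property of the kernel $k'$, and to deduce part (b) by dualising via remark \ref{rem:dual}. For the existence and uniqueness of $u$ in part (a), I consider the composite $kg \colon K \to A'$ and test it against $f'$: using commutativity of the square and the defining equation $kf = 0$ of a kernel, I compute
\begin{align*}
(kg)f' = k(gf') = k(fh) = (kf)h = 0 \cdot h = 0.
\end{align*}
The universal property of the kernel $k'$ then yields a unique $u \colon K \to K'$ with $uk' = kg$, which is exactly the claim.

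For the isomorphism statement, suppose $g$ and $h$ are isomorphisms. From $gf' = fh$ I obtain $g^{-1} f = f' h^{-1}$, so the square with $g^{-1}$ and $h^{-1}$ (and rows $f'$, $f$ swapped) is again commutative. Applying the construction just established to this new square yields a unique morphism $v \colon K' \to K$ with $vk = k' g^{-1}$. I then want to check $uv = 1_K$ and $vu = 1_{K'}$. Post-composing $uv$ with $k$ gives $uvk = u k' g^{-1} = kg g^{-1} = k = 1_K k$, and since kernels are monomorphic I may cancel $k$ to conclude $uv = 1_K$. The symmetric calculation $vuk' = vkg = k' g^{-1} g = k' = 1_{K'} k'$ together with the monomorphicity of $k'$ yields $vu = 1_{K'}$, so $u$ is an isomorphism with inverse $v$.

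Part (b) is handled by duality. The given commutative square in $\sA$ becomes a commutative square in $\sA^{\op}$, and cokernels of $f$, $f'$ in $\sA$ correspond to kernels of $f^{\op}$, $(f')^{\op}$ in $\sA^{\op}$. Part (a) applied in $\sA^{\op}$ produces a unique morphism $u^{\op} \colon C' \to C$ in $\sA^{\op}$ satisfying the corresponding equation, which translates back to the unique $u \colon C \to C'$ with $cu = hc'$ in $\sA$, and isomorphism is preserved under dualisation.

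I do not expect any real obstacle here: the proof is a routine application of the universal property combined with cancellation by monomorphisms. The one detail worth being careful about is verifying that the inverted square is genuinely commutative before applying the construction a second time, so that the induced morphism $v$ is well-defined.
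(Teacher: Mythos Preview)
Your proof is correct and follows essentially the same approach as the paper: both use the universal property of $k'$ applied to $kg$ for existence and uniqueness, then construct the inverse via the inverted square and cancel by monomorphicity of the kernel maps; part (b) is handled by duality in both.
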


\begin{proof}
Ad (a). We have $k g f' = k f h = 0$. Since $k'$ is a kernel of $f'$, there exists a morphism $u \colon K \to K'$ such that $kg= uk'$ holds. The morphism $u$ is unique because $k'$ is a monomorphism.

Now suppose $g$ and $h$ to be isomorphisms. Consider the following diagram.
\begin{align*}
\xymatrix{
A' \ar[r]^{f'} \ar[d]^{g^{-1}} & B' \ar[d]^{h^{-1}}\\
A \ar[r]^{f} & B
}
\end{align*}
This diagram commutes, since we have $g^{-1} f = g^{-1} f h h^{-1} = g^{-1} g f' h^{-1} = f' h^{-1}$.

As seen above, there exists a morphism $u' \colon K' \to K$ such that $k' g^{-1} = u'k$ holds.

 We have $u u' k = u k'g^{-1} = k g g^{-1} = k$ and $u' u k' = u' k g = k' g^{-1} g = k'$. Since $k$ and $k'$ are monomorphic, we conclude that $u u' = 1$ and $u' u =1$ hold.
 
 Ad (b). This is dual to (a).
\end{proof}

\begin{rem} \label{rem:kernelcokernelfactorisation}
Suppose given a preadditive category $\sA$ and $f \in \Mor \sA$.
Suppose given the following kernel-cokernel-factorisations of $f$, cf. convention \ref{conv:kernelcokernelfactorisation}.
\begin{align*}
\xymatrix{
K \arm[r]^k & A \are[dr]_p \ar[rrr]^f & & & B \are[r]^*+<0.8mm,0.8mm>{\scriptstyle c} &C
\\
&& J \ar[r]^{\hat f} & I \arm[ru]_i
}
& \phantom{abcdefg} \xymatrix{
K' \arm[r]^{k'} & A \are[dr]_{p'} \ar[rrr]^f & & & B \are[r]^*+<0.8mm,0.8mm>{\scriptstyle c'} &C'
\\
&& J' \ar[r]^{\hat{f'}} & I' \arm[ru]_{i'}
}
\end{align*}

By adding the induced morphisms between the kernels resp. cokernels, we obtain the following diagram.

\begin{align*}
\xymatrix{
K \arm[rr]^k \ar[dd]^{u} && A \ar[dd]_{1_A}\are[drr]_p \ar[rrrrr]^{f} & & & & & B \ar[dd]^{1_B} \are[rr]^*+<0.8mm,0.8mm>{\scriptstyle c}  &&C\ar[dd]_{v}
\\
&&&& J \ar[r]^{\hat f} \ar[dd]_(0.3){x} & I \arm[rur]_i \ar[dd]^(0.3){y}
\\
%K' \arm[rr]^{k'} && A \are[drr]_{p'} \ar[rrrrr]|(0.39){\phantom{\rule{2mm}{3mm}}}|(0.61){\phantom{\rule{2mm}{3mm}}}^(.15)f & & & & & B \are[rr]^{c'} &&C'
K' \arm[rr]^{k'} && A \are[drr]_{p'} \ar[rrrrr]|(0.39)\hole|(0.612)\hole^(.15)f & & & & & B \are[rr]^*+<0.8mm,0.8mm>{\scriptstyle c'} &&C'
\\
&&&& J' \ar[r]^{\hat{f'}} & I' \arm[rur]_{i'}
}
\end{align*}

This diagram commutes, in particular we have $\hat{f} y= x\hat{f'}$. 
Since $u$, $v$, $x$ and $y$ are isomorphisms, the morphism $\hat{f}$ is an isomorphism if and only if $\hat{f'}$ is an isomorphism. Cf. lemma \ref{lem:inducedmorphisms}.

\end{rem}

\begin{proof}
We have $p \hat f y i' = p \hat f i = f = p' \hat{f'} i' = px \hat{f'} i'$. Since $p$ is epimorphic and $i'$ is monomorphic, we obtain $\hat f y = x \hat{f'}$.
\end{proof}

\begin{rem}\label{rem:kernelcokernelabeliancategory}
Suppose given an abelian category $\sA$.
\begin{itemize}
	\item[(a)]  Suppose given an epimorphism $f \colon A \to B$ and a kernel $k \colon K \to A$ of $f$. Then $f$ is a cokernel of $k$.
	\item[(b)] Suppose given a monomorphism $f \colon A \to B$ and a cokernel $c \colon B \to C$ of $f$. Then $f$ is a kernel of $c$.
\end{itemize}
\end{rem}

\begin{proof} Ad (a).
We have a kernel-cokernel-factorisation of $f$ as follows.
\begin{align*}
\xymatrix{
K \arm[r]^k & A \are[dr]_p \are[rrr]^*+<0.8mm,0.8mm>{\scriptstyle f} & & & B \are[r]^*+<0.8mm,0.8mm>{\scriptstyle 0} &0
\\
&& J \ar[r]^{\hat f} & B \arm[ru]_{1}
}
\end{align*}
Since $\hat f$ is an isomorphism and $p$ is a cokernel of $k$, we conclude that $f$ is a cokernel of $k$ too.

Ad (b). This is dual to (a).
\end{proof}

\begin{lem} \label{lem:imagekernelcokernel}
Suppose given an abelian category $\sA$. Suppose given $\xymatrix@1{A \,\ar[r]^f &\,B}$ in $\sA$, a kernel $k$ of $f$, a cokernel $c$ of $f$ and an image $\xymatrix@1{A \,\are[r]^*+<0.6mm,0.6mm>{\scriptstyle p} & \,I\, \arm[r]^i & \,B}$ of $f$. 
\begin{itemize}
	\item[(a)] The morphism $p$ is a cokernel of $k$.
	\item[(b)] The morphism $i$ is a kernel of $c$.
\end{itemize}
\end{lem}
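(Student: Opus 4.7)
My plan is to prove (a) directly and to deduce (b) by dualising, invoking remark \ref{rem:dual} to pass to $\sA^{\op}$ (where an image of $f^{\op}$ is given by $\xymatrix@1{B\,\ar[r]^{i^{\op}}&\,I\,\ar[r]^{p^{\op}}&\,A}$, the kernel of $f$ becomes a cokernel of $f^{\op}$, and vice versa).

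For (a), I would first check the vanishing condition $kp=0$: since $kf=0$ and $f=pi$, we get $kpi=0$, and then $i$ being monic forces $kp=0$. So $p$ is a candidate for factoring through any morphism killed by $k$ composed with it. What is left is the universal property: given $g\colon A\to X$ with $kg=0$, produce a unique $u\colon I\to X$ with $pu=g$. Uniqueness is immediate since $p$ is an epimorphism.

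To obtain existence, I would exploit the epimorphism $p$. Choose a kernel $k_0\colon K_0\to A$ of $p$. Because $\sA$ is abelian and $p$ is epimorphic, remark \ref{rem:kernelcokernelabeliancategory}~(a) tells us that $p$ is itself a cokernel of $k_0$. Now observe that $k_0 f=k_0 p i = 0$, so by the kernel property of $k$ there is a morphism $\ell\colon K_0\to K$ with $k_0=\ell k$. For any $g\colon A\to X$ with $kg=0$, this yields $k_0 g=\ell k g=0$, and hence the cokernel property of $p$ (as a cokernel of $k_0$) produces the required unique $u\colon I\to X$ with $pu=g$. Combining with $kp=0$, this shows $p$ is a cokernel of $k$, proving (a).

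The main obstacle I anticipate is the routing through an auxiliary kernel $k_0$ of $p$; the key insight is that remark \ref{rem:kernelcokernelabeliancategory}~(a) converts ``$p$ is epi'' into ``$p$ is a cokernel of something'', which is the only non-trivial ingredient. Once (a) is established, part (b) follows by applying (a) to $f^{\op}$ in $\sA^{\op}$, since the roles of $p$ and $i$, and of $k$ and $c$, interchange under dualisation.
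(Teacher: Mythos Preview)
Your proof is correct and uses the same key ingredient as the paper, namely remark \ref{rem:kernelcokernelabeliancategory}~(a). The paper's route is slightly more direct: rather than introducing an auxiliary kernel $k_0$ of $p$ and then comparing $k_0$ to $k$, the paper observes in one line that $k$ is already a kernel of $p$ (because $i$ is monomorphic, so $gp=0$ iff $gpi=0$ iff $gf=0$ for any $g$), and then applies remark \ref{rem:kernelcokernelabeliancategory}~(a) directly to the pair $(k,p)$. Your detour through $k_0$ works, but it reproves by hand what the paper gets for free by recognising that $k$ itself serves as a kernel of $p$. Part (b) is handled by duality in both arguments.
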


\begin{proof} Ad (a).
The morphism $k$ is also a kernel of $p$ since $i$ is monomorphic. Therefore $p$ is a cokernel of $k$, cf. remark \ref{rem:kernelcokernelabeliancategory}~(a).

Ad (b). This is dual to (a).
\end{proof}

\begin{lem} \label{lem:inducedimage}
Suppose given an abelian category $\sA$ and the following commutative diagram in $\sA$.
\begin{align*}
\xymatrix{
A \ar[r]^f \ar[d]^x & B \ar[d]^y\\
C \ar[r]^{g} & D
}
\end{align*}
Suppose given an image $\xymatrix@1{A \,\are[r]^*+<0.6mm,0.6mm>{\scriptstyle p} &\, I\, \arm[r]^i &\,B}$ of $f$ and an image $\xymatrix@1{C \,\are[r]^*+<0.6mm,0.6mm>{\scriptstyle q} &\,J\, \arm[r]^j &\,D}$ of $g$.
	
	There exists a unique morphism $u \colon I \to J$ such that the following diagram commutes.
	\begin{align*}
\xymatrix{
 A \are[r]^*+<0.6mm,0.6mm>{\scriptstyle p} \ar[d]^x & I\ar[d]^u \arm[r]^i &B\ar[d]^y\\
C \are[r]^*+<0.6mm,0.6mm>{\scriptstyle q} & J \arm[r]^j &D
}
\end{align*}
The morphism $u$ is called the induced morphism between the images.

If $x$ and $y$ are isomorphisms, then $u$ is an isomorphism too.
\end{lem}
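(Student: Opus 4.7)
The plan is to reduce to the universal property of the cokernel $p$. By lemma \ref{lem:imagekernelcokernel}~(a), if $k \colon K \to A$ is a kernel of $f$, then $p \colon A \to I$ is a cokernel of $k$. So it suffices to show that the morphism $xq \colon A \to J$ annihilates $k$ in order to obtain a unique morphism $u \colon I \to J$ with $pu = xq$.

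For this, I first note that $k x q j = k x g = k f y = 0$, using commutativity of the outer square and $kf = 0$. Since $j$ is monomorphic, this forces $kxq = 0$. The universal property of $p$ as a cokernel of $k$ then yields a unique $u \colon I \to J$ such that $pu = xq$.

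Next I would verify the second commutativity $uj = iy$. Computing $p u j = x q j = x g = f y = p i y$ and using that $p$ is epimorphic gives $u j = i y$. Uniqueness of $u$ subject to both equations already follows from $pu = xq$ alone together with $p$ being epimorphic.

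Finally, to show that $u$ is an isomorphism whenever $x$ and $y$ are, I would apply the same construction to the commutative square
\begin{align*}
\xymatrix{
C \ar[r]^g \ar[d]^{x^{-1}} & D \ar[d]^{y^{-1}}\\
A \ar[r]^f & B
}
\end{align*}
together with the images $\xymatrix@1{C\,\are[r]^*+<0.6mm,0.6mm>{\scriptstyle q}&\,J\,\arm[r]^j&\,D}$ of $g$ and $\xymatrix@1{A\,\are[r]^*+<0.6mm,0.6mm>{\scriptstyle p}&\,I\,\arm[r]^i&\,B}$ of $f$, obtaining a morphism $v \colon J \to I$ with $q v = x^{-1} p$ and $v i = j y^{-1}$. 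Then $p (uv) = xq v = x x^{-1} p = p = p \cdot 1_I$ forces $uv = 1_I$ by epimorphy of $p$, and symmetrically $vu = 1_J$ follows from epimorphy of $q$. The only non-routine ingredient is the observation that $p$ is a cokernel (via lemma \ref{lem:imagekernelcokernel}); everything else is a direct application of universal properties, parallel to the kernel case treated in lemma \ref{lem:inducedmorphisms}.
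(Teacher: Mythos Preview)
Your argument is correct and follows essentially the same route as the paper: use lemma~\ref{lem:imagekernelcokernel} to recognise $p$ as a cokernel, then invoke its universal property. The minor differences are that the paper passes through the induced morphism between the kernels of $f$ and $g$ (so that $u$ arises literally as an induced morphism between cokernels via lemma~\ref{lem:inducedmorphisms}), whereas you bypass this by directly checking $kxq=0$ using that $j$ is monic; and for the isomorphism claim the paper quotes the ``$x,y$ iso $\Rightarrow$ induced morphism iso'' part of lemma~\ref{lem:inducedmorphisms} twice (once for kernels, once for cokernels), while you construct the inverse by hand. Both variants are equally valid; yours is marginally more self-contained.
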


\begin{proof}
Let $k \colon K \to A$ be a kernel of $f$ and let $\ell \colon L \to C$ be a kernel of $g$. Let $v \colon K \to L$ be the induced morphism between the kernels, cf. lemma \ref{lem:inducedmorphisms}. Lemma \ref{lem:imagekernelcokernel} says that $p$ is a cokernel of $k$ and that $q$ is a cokernel of $\ell$. Let $u\colon I \to J$ be the induced morphism between the cokernels.
	\begin{align*}
\xymatrix{
 K \ar[d]^v\arm[r]^k & A \are[r]^*+<0.6mm,0.6mm>{\scriptstyle p} \ar[d]^x & I\ar[d]^u \arm[r]^i &B\ar[d]^y\\
L \arm[r]^\ell  &C \are[r]^*+<0.6mm,0.6mm>{\scriptstyle q} & J \arm[r]^j &D
}
\end{align*}
It remains to show that $uj=iy$ holds. We have $p u j = x q j = xg=fy= p i y$, so $uj=iy$ is true since $p$ is epimorphic.

The morphism $u$ is unique because $p$ is epimorphic and we necessarily have $pu=xq$.

If $x$ and $y$ are isomorphisms, then $v$ is an isomorphism too. In this case $v$ and $x$ are isomorphisms, so $u$ is an isomorphism too. Cf. lemma \ref{lem:inducedmorphisms}.
\end{proof}

\begin{rem} \label{rem:exactsequence}
Suppose given an abelian category $\sA$ and the following commutative diagram in $\sA$.
\begin{align*}
\xymatrix{
A\ar[rr]^f \are[dr]_p &&B \ar[rr]^g \are[dr]_q &&C
\\
&I \arm[ur]_i && J \arm[ur]_j
}
\end{align*}
The following statements are equivalent.
\begin{itemize}
	\item[(a)] The sequence $\xymatrix@1{A\, \ar[r]^f &\,B\, \ar[r]^g &\, C}$ is exact.
	\item[(b)] The sequence $\xymatrix@1{I \,\arm[r]^i &\,B\, \are[r]^*+<0.6mm,0.6mm>{\scriptstyle q} &\, J}$ is short exact.
	\item[(c)] The sequence $\xymatrix@1{I\, \arm[r]^i &\,B\, \are[r]^*+<0.6mm,0.6mm>{\scriptstyle q} &\, J}$ is left-exact.
	\item[(d)] The sequence $\xymatrix@1{I \,\arm[r]^i &\,B\, \are[r]^*+<0.6mm,0.6mm>{\scriptstyle q} & \,J}$ is right-exact.
	\item[(e)] The sequence $\xymatrix@1{I\, \arm[r]^i &\,B\, \ar[r]^g & \,C}$ is left-exact.
	\item[(f)] The sequence $\xymatrix@1{A\, \ar[r]^f &\,B\, \are[r]^*+<0.6mm,0.6mm>{\scriptstyle q} &\, J}$ is right-exact.
\end{itemize}
\end{rem}

\begin{lem} \label{lem:exactfunctor}
Suppose given abelian categories $\sA$ and $\sB$ and an additive functor $F \colon \sA \to \sB$.
\begin{itemize}
	\item[(a)] Suppose that for  $\xymatrix@1{X \,\ar[r]^f&\,Y}$ in $\sA$, there exists a kernel $k\colon K \to X$ of $f$ such that the sequence $\xymatrix@1{F(K) \,\ar[r]^-{F(k)} &\, F(X) \,\ar[r]^-{F(f)} & \,F(Y)}$ is left-exact in $\sB$. Then $F$ is left-exact.
	\item[(b)] Suppose that for  $\xymatrix@1{X \,\ar[r]^f&\,Y}$ in $\sA$, there exists a cokernel $c\colon Y \to C$ of $f$ such that the sequence $\xymatrix@1{ F(X)\, \ar[r]^-{F(f)} & \,F(Y)\, \ar[r]^-{F(c)}&\,F(C)}$ is right-exact in $\sB$. Then $F$ is right-exact.
\end{itemize}
\end{lem}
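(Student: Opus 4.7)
The plan is to derive (a) directly from the definition. Let $f \colon Y \to Z$ be a morphism in $\sA$ and let $x \colon X \to Y$ be a kernel of $f$, so that $X \to Y \to Z$ is an arbitrary left-exact sequence in $\sA$; I have to show that $F(x)$ is a kernel of $F(f)$ in $\sB$. The hypothesis of (a) applied to $f$ provides some kernel $k \colon K \to Y$ of $f$ such that $F(k)$ is a kernel of $F(f)$.

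Both $x$ and $k$ are kernels of the same morphism $f$, so Lemma \ref{lem:inducedmorphisms}(a), applied to the commutative square with $1_Y$ and $1_Z$ as vertical arrows, yields a unique isomorphism $u \colon X \to K$ with $uk = x$. Applying $F$ gives $F(x) = F(u) F(k)$ with $F(u)$ an isomorphism in $\sB$. A short verification against the universal property of the kernel shows that precomposing a kernel with an isomorphism again yields a kernel of the same morphism (any test map factors uniquely through $F(k)$, and composition with $F(u)^{-1}$ transports the factorisation to $F(x)$). Hence $F(x)$ is a kernel of $F(f)$, which establishes the left-exactness of $F$.

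Part (b) then follows by duality: by Remark \ref{rem:dual}(g), $F$ is right-exact if and only if $F^{\op} \colon \sA^{\op} \to \sB^{\op}$ is left-exact, and the hypothesis of (b) for $F$ translates under $(-)^{\op}$ into exactly the hypothesis of (a) for $F^{\op}$. The main conceptual point, and the only real obstacle, is to recognise that although the hypothesis supplies only a single well-behaved kernel per morphism, this suffices, because any two kernels of the same morphism are canonically isomorphic and functors preserve isomorphisms; no uniform choice of preserved kernel is needed.
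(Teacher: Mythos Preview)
Your proof is correct and follows essentially the same approach as the paper: pick an arbitrary left-exact sequence, compare its kernel with the distinguished kernel supplied by the hypothesis via the canonical isomorphism from Lemma~\ref{lem:inducedmorphisms}(a), and transport the kernel property along $F$ of that isomorphism; part~(b) is then obtained by duality. The paper's argument is slightly terser (it does not spell out the verification that precomposing a kernel with an isomorphism yields a kernel, and it simply says ``This is dual to (a)'' rather than invoking Remark~\ref{rem:dual}(g) explicitly), but the substance is identical.
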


\begin{proof} Ad (a).
Suppose given a left-exact sequence $\xymatrix@1{L\, \ar[r]^\ell &\, X\,\ar[r]^f & \,Y}$ in $\sA$. There exists a kernel $k\colon K \to X$ of $f$ such that the sequence $\xymatrix@1{F(K)\, \ar[r]^-{F(k)} &\, F(X)\, \ar[r]^-{F(f)} & \,F(Y)}$ is left-exact. There exists an isomorphism $u \colon K \to L$ such that $u\ell = k$, cf. lemma \ref{lem:inducedmorphisms}~(a). Since $F(u)$ is also an isomorphism and $F(u) F(\ell) = F(k)$ holds, we conclude that $F(\ell)$ is a kernel of $F(f)$, so $\xymatrix@1{F(L)\, \ar[r]^-{F(\ell)} & \,F(X)\, \ar[r]^-{F(f)} & \,F(Y)}$ is left-exact too.

Ad (b). This is dual to (a).
\end{proof}

\chapter{Factor categories}
\thispagestyle{empty}
\label{ch:factorcategories}

Suppose given an additive category $\sA$.

\begin{defn}[Ideal]
\label{defn:ideal}
Suppose given $J \subseteq \Mor \sA$. We set $\Hom_{\sA,J}(A,B):=\Hom_\sA(A,B) \cap J$ for $A,B \in \Ob \sA$.\\
We say that $J$ is an \emph{ideal} in $\sA$ if it satisfies the following two conditions.
\begin{itemize}
\item[(I1)] \label{I1} Given $\xymatrix@1{A\,\ar[r]^a &\, B\,\ar[r]^j &\, C\,\ar[r]^c & \,D}$ in $\sA$ with $j \in J$, we have $ajc \in J$.
\item[(I2)] \label{I2} $\Hom_{\sA,J}(A,B)$ is a subgroup of $\Hom_{\sA}(A,B)$ for $A,B \in \Ob \sA$.
\end{itemize}
\end{defn}

\begin{rem}
\label{rem:I2}
Suppose given $J \subseteq \Mor \sA$ satisfying condition \hyperref[I1]{(I1)} from the previous definition~\ref{defn:ideal}. Then condition \hyperref[I2]{(I2)} is equivalent to the following two conditions.
\begin{itemize}
\item[(I2$'$)]\label{I2'}  $\Hom_{\sA,J}(A,B) \not = \emptyset$ for $A,B \in \Ob \sA$.
\item[(I2$''$)]\label{I2''} Given $\xymatrix@1{A\,\ar@<2pt>[r]^{j} \ar@<-2pt>[r]_{k}  &\,B}$ in $\sA$ with $j,k\in J$, we have $\left(\begin{smallmatrix}j & 0\\0 & k\end{smallmatrix}\right) \in \Hom_{\sA,J}(A\oplus A,B \oplus B)$.
\end{itemize}
\end{rem}

\begin{proof}
Suppose that $J$ satisfies the conditions \hyperref[I2']{(I2$'$)} and \hyperref[I2'']{(I2$''$)}. Given $\xymatrix@1{A\,\ar@<2pt>[r]^{j} \ar@<-2pt>[r]_{k}  &\,B}$ in $\sA$  with $j,k \in J$, conditions \hyperref[I1]{(I1)} and \hyperref[I2'']{(I2$''$)} imply $j-k= \left(\begin{smallmatrix}1&1\end{smallmatrix}\right) \left(\begin{smallmatrix}j & 0\\0 & k\end{smallmatrix}\right) \left(\begin{smallmatrix}1\\-1\end{smallmatrix}\right) \in J$. Condition  \hyperref[I2']{(I2$'$)} says that $\Hom_{\sA,J}(A,B) \not = \emptyset$. Therefore $\Hom_{\sA,J}(A,B)$ is a subgroup of $\Hom_{\sA}(A,B)$. We conclude that condition \hyperref[I2]{(I2)} holds.

\smallskip

Suppose that $J$ satisfies the condition \hyperref[I2]{(I2)}. Then condition \hyperref[I2']{(I2$'$)} holds. Given $\xymatrix@1{A\,\ar@<2pt>[r]^{j} \ar@<-2pt>[r]_{k}  &\,B}$ in $\sA$  with $j,k \in J$, condition \hyperref[I1]{(I1)} implies $\left(\begin{smallmatrix}j & 0\\0 & 0\end{smallmatrix}\right) = \left(\begin{smallmatrix}1\\0\end{smallmatrix}\right)j \left(\begin{smallmatrix}1 & 0\end{smallmatrix}\right) \in \Hom_{\sA,J}(A\oplus A,B \oplus B)$ and 
$\left(\begin{smallmatrix}0 & 0\\0 & k\end{smallmatrix}\right) = \left(\begin{smallmatrix}0\\1\end{smallmatrix}\right)k \left(\begin{smallmatrix}0&1\end{smallmatrix}\right) \in \Hom_{\sA,J}(A\oplus A,B \oplus B)$.\\
 Therefore we have $\left(\begin{smallmatrix}j & 0\\0 & k\end{smallmatrix}\right) = \left(\begin{smallmatrix}j & 0\\0 & 0\end{smallmatrix}\right) + \left(\begin{smallmatrix}0 & 0\\0 & k\end{smallmatrix}\right) \in \Hom_{\sA,J}(A\oplus A,B \oplus B)$ according to \hyperref[I2]{(I2)}. So condition \hyperref[I2'']{(I2$''$)} holds.
\end{proof}

Suppose given an ideal $J$ in $\sA$ throughout the rest of this chapter \ref{ch:factorcategories}.

\begin{lemdefn}[Factor category]
\label{lemdefn:factorcategory}
The \emph{factor category} $\sA/J$ shall be defined as follows.
\begin{itemize}
\item Let $\Ob (\sA /J) := \Ob \sA$.
\item Let $\Hom_{\sA/J}(A,B) := \Hom_\sA(A,B)/\Hom_{\sA,J}(A,B)$ for $A,B \in \Ob (\sA/J)$, cf. definition~\ref{defn:ideal}.\\
 We write $[a]:=a+\Hom_{\sA,J}(A,B)$ for $a \in  \Hom_\sA(A,B)$.
\item Composites and identities are defined via representatives:\\
 Let $[a][b]:=[ab]$ for $\xymatrix@1{A\,\ar[r]^{a} &\, B\,\ar[r]^{b} &\, C}$ in $\sA$.

Let $1_A:=[1_A]$  for $A \in \Ob (\sA/J)=\Ob \sA$.
\end{itemize}
This in fact defines a category.
\end{lemdefn}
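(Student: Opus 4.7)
The plan is to verify the three axioms of a category: well-definedness of composition on equivalence classes, associativity, and the two-sided identity laws. Since the object class is inherited verbatim from $\sA$, only the morphism-level structure needs attention, and the bulk of the work is concentrated in a single place, namely showing that the prescription $[a][b] := [ab]$ does not depend on the choice of representatives. Once well-definedness is in place, associativity and the identity laws will descend immediately from the corresponding facts in $\sA$.

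For the well-definedness step, I would fix $\xymatrix@1{A\,\ar[r]^a&\,B\,\ar[r]^b&\,C}$ in $\sA$ and alternative representatives $a' = a + j$ and $b' = b + k$ with $j \in \Hom_{\sA,J}(A,B)$ and $k \in \Hom_{\sA,J}(B,C)$. Using bilinearity of composition in the preadditive category $\sA$, one expands $a'b' - ab = jb + ak + jk$. Each of these three terms is of the form $\alpha m \beta$ with $m \in J$ and $\alpha,\beta$ morphisms in $\sA$ (inserting $1_A$ or $1_C$ on the free side where necessary), so condition \hyperref[I1]{(I1)} places each of them in $J$. Then condition \hyperref[I2]{(I2)} gives $jb + ak + jk \in \Hom_{\sA,J}(A,C)$, whence $[a'b'] = [ab]$.

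The remaining axioms are routine: associativity follows from $([a][b])[c] = [(ab)c] = [a(bc)] = [a]([b][c])$ using associativity in $\sA$, and the identity laws from $[1_A][a] = [1_A \cdot a] = [a]$ and $[a][1_B] = [a \cdot 1_B] = [a]$. I do not expect a serious obstacle; the only genuinely technical point is the bookkeeping for well-definedness, which is mechanical given the two ideal axioms.
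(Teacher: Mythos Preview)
Your proof is correct and follows essentially the same approach as the paper: verify well-definedness of composition using the ideal axioms, then inherit associativity and identities from $\sA$. The only cosmetic difference is that the paper rewrites $ab - a'b'$ as a sum of two terms, $a(b-b')1_C + 1_A(a-a')b'$, rather than your three terms $jb + ak + jk$, but the logic is identical.
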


\begin{proof}
%Morphisms are well-defined since $\Hom_{\sA,J}(A,B)$ is a subgroup of the abelian group $\Hom_\sA(A,B)$ for $A,B \in \Ob \sA/J$.\\
First, we show the well-definedness of the composition:\\
Given $\xymatrix@1{A\,\ar@<2pt>[r]^a \ar@<-2pt>[r]_{a'} &\, B\,\ar@<2pt>[r]^b \ar@<-2pt>[r]_{b'} &\, C}$ in $\sA$ with $[a]=[a']$ and $[b]=[b']$, %we need to conclude that $[ab]=[a'b']$.\\
we have 
\begin{align*}
ab-a'b'=ab-ab'+ab'-a'b'=a(b-b')1_C + 1_A (a-a')b' \in J
\end{align*}
because of $b-b' \in J$, $a-a' \in J$ and the conditions \hyperref[I1]{(I1)} and \hyperref[I2]{(I2)}. We conclude that $[ab]=[a'b']$ holds.\\
Associativity and properties of the identities are inherited from $\sA$:\\
Given $\xymatrix@1{A\,\ar[r]^{a} &\, B\,\ar[r]^{b} & \,C\,\ar[r]^{c} &\, D}$ in $\sA$, we obtain 
$([a][b])[c]=[(ab)c]=[a(bc)]=[a]([b][c])$  and $1_A [a]=[1_A a]=[a] = [a 1_B] = [a] 1_B$.
\end{proof}

\begin{nota}
\label{nota:brackets}
When writing $[a] \in \Mor(\sA/J)$, we always suppose $a \in \Mor \sA$.
\end{nota}

\begin{rem} \label{rem:factorfullsubcategory}
Suppose given a full additive subcategory $\sN$ of $\sA$. Let $\F_\sN$ be the set of all $ f \in \Mor \sA $ such that there exists a factorisation $\big(\xymatrix@1{A \,\ar[r]^{f} & \,B}\big) =  \big(\xymatrix@1{A\, \ar[r]^{g} &\,N\, \ar[r]^{h} &\, B}\big)$  with $N \in \Ob \sN$. The set $\F_\sN$ is an ideal in $\sA$. We shortly write $\sA / \sN := \sA / \F_{\sN}$.
%
%Let $S \subseteq \Ob(\sA)$. Recall that $\langle S \rangle$ denotes the full additive subcategory of $\sA$ generated by $S$. Then the ideal $\J_{\langle S \rangle}$ consists of all $f \in \Mor \sA$ such that there exists a factorisation ... .
\end{rem}

\begin{proof}
We use remark \ref{rem:I2} to show that $\F_{\sN}$ is an ideal in $\sA$.

Given $\xymatrix@1{A\, \ar[r]^a &\,B\, \ar[r]^j &\,C\, \ar[r]^c &\, D}$ in $\sA$ with $j \in \F_{\sN}$, there exists a factorisation $\big(\xymatrix@1{B\, \ar[r]^j&\,C} \big) = \big( \xymatrix@1{B\, \ar[r]^g &\,N\, \ar[r]^h &\, C}\big)$ with $N \in \Ob \sN$.

We have 
\begin{align*}
\big( \xymatrix{A \ar[r]^a &B \ar[r]^j &C \ar[r]^c & D} \big) = \big( \xymatrix{A \ar[r]^a &B \ar[r]^g &N\ar[r]^h&C \ar[r]^c & D} \big) = \big(\xymatrix{A \ar[r]^{ag} &N \ar[r]^{hc} & D} \big).
\end{align*}

Therefore $ajc \in \F_{\sN}$ is true, so condition \hyperref[I1]{(I1)} holds.

There exists a zero object $0 \in \Ob \sN$ since $\sN$ is a full additive subcategory of $\sA$. We conclude that $\Hom_{\sA,\sN}(A,B) \neq \emptyset$ holds for $A,B \in \Ob \sA$ since we have the factorisation $\big( \xymatrix@1{A\, \ar[r]^0 &\,B}\big) = \big(\xymatrix@1{A\, \ar[r]^{0} & \,0 \,\ar[r]^{0} & \,B}\big)$ with $0 \in \Ob \sN$, so condition \hyperref[I2']{(I2$'$)} holds.

Suppose given $\xymatrix@1{A\,\ar@<2pt>[r]^{j} \ar@<-2pt>[r]_k&\,B}$ in $\sA$ with $j,k \in \F_{\sN}$. There exist factorisations $\big(\xymatrix@1{A\, \ar[r]^j &\,B} \big) = \big(\xymatrix@1{A\,\ar[r]^g &\,N\, \ar[r]^h&\,B } \big)$ and $\big(\xymatrix@1{A\, \ar[r]^k &\,B}\big) = \big( \xymatrix@1{A \,\ar[r]^u &\,S\, \ar[r]^v &\,B} \big)$ with $N,S \in \Ob \sN$. A direct sum $C \cong N \oplus S$ of $N$ and $S$ exists in $\sN$  since $\sN$ is a full additive subcategory of $\sA$.

Finally, we have
\begin{align*}
\Big( \xymatrix{A \oplus A \ar[r]^{\sm{j&0\\0&k}}&B \oplus B} \Big) = \Big(\xymatrix{A \oplus A \ar[r]^-{\sm{g&0\\0&u}}&C\ar[r]^-{\sm{h&0\\0&v}}& B\oplus B} \Big)
\end{align*}
with $C \in \Ob \sN$, so condition \hyperref[I2'']{(I2$''$)} holds.
\end{proof}

\begin{prop}
The factor category $\sA/J$ is additive.
\label{prop:factorcategoryadditive}
\end{prop}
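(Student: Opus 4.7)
The plan is to verify the three requirements for additivity (preadditive structure, zero object, finite direct sums) by transferring the corresponding structures from $\sA$ along the canonical map $[\cdot] \colon \Mor\sA \to \Mor(\sA/J)$. Condition (I2) is what makes the addition on hom sets well-defined, and condition (I1) together with the preadditive structure of $\sA$ then gives bilinearity of composition for free.

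First I would equip each $\Hom_{\sA/J}(A,B) = \Hom_\sA(A,B)/\Hom_{\sA,J}(A,B)$ with the quotient abelian group structure $[a]+[b] := [a+b]$; this is well-defined precisely because $\Hom_{\sA,J}(A,B)$ is a subgroup of $\Hom_\sA(A,B)$ by condition (I2). Bilinearity with respect to composition then follows by choosing representatives and computing
\begin{align*}
[a]\bigl([b]+[b']\bigr)[c] \;=\; [a(b+b')c] \;=\; [abc + ab'c] \;=\; [a][b][c] + [a][b'][c],
\end{align*}
which is inherited directly from bilinearity in $\sA$.

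Second, I would show that the chosen zero object $0_\sA$ of $\sA$ remains a zero object in $\sA/J$. In $\sA$ the sets $\Hom_\sA(0_\sA,A)$ and $\Hom_\sA(A,0_\sA)$ are singletons for every $A \in \Ob\sA$, hence so are their quotients, giving the required uniqueness of morphisms in and out of $0_\sA$. Equivalently, $1_{0_\sA} = 0_{0_\sA}$ in $\sA$ yields $1_{0_\sA} = [1_{0_\sA}] = [0_{0_\sA}] = 0_{0_\sA}$ in $\sA/J$, which by the characterisation in the conventions identifies $0_\sA$ as a zero object of the preadditive category $\sA/J$.

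Finally, for binary (hence, by induction, finite) direct sums, given a chosen direct sum $\bigl(A\oplus B, (i,j), (p,q)\bigr)$ of $A,B$ in $\sA$, I would show that the same underlying object together with the bracketed morphisms forms a direct sum in $\sA/J$. Applying $[\cdot]$ to the identities $ip = 1_A$, $jq = 1_B$, and $pi+qj = 1_{A\oplus B}$ yields $[i][p] = 1_A$, $[j][q] = 1_B$, and $[p][i] + [q][j] = 1_{A\oplus B}$, so the defining relations of a direct sum transfer verbatim. No real obstacle arises here; the only nontrivial point in the whole proof is the well-definedness of addition, which is exactly what condition (I2) was designed to ensure.
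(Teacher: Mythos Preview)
Your proof is correct and follows essentially the same approach as the paper: quotient the hom-groups using (I2), verify bilinearity by the same one-line computation, show the zero object survives because singleton hom-sets remain singletons under the quotient, and transfer the direct-sum identities $ip=1_A$, $jq=1_B$, $pi+qj=1_C$ verbatim by applying~$[\cdot]$. The paper's argument is slightly terser but substantively identical.
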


\begin{proof}
First of all $\Hom_{\sA/J}(A,B)$ is an abelian group for $A,B \in \Ob \sA/J$ as a factor group of the abelian group $\Hom_\sA(A,B)$. The compatibility with the composition is seen as follows.\\
Given $\xymatrix@1{A\,\ar[r]^{[a]}& \, B\,\ar@<2pt>[r]^{[b]} \ar@<-2pt>[r]_{[b']}  &\,C \,\ar[r]^{[c]}  &\, D }$ in $\sA/J$, we have
\begin{align*}
[a]([b]+[b'])[c]=[a(b+b')c]=[abc+ab'c]=[a][b][c]+[a][b'][c].
\end{align*}
The zero object $0$ of $\sA$ is also a zero object in $\sA/J$ since factors of one-element hom-groups contain precisely one element.\\
For any two objects $A,B \in \Ob \sA/J=\Ob \sA$, we have a direct sum $\xymatrix@1{A\,\ar@<2pt>[r]^i   &\,C\,\ar@<2pt>[l]^{p} \ar@<-2pt>[r]_q&\,B\ar@<-2pt>[l]_{j}}$ in $\sA$ with $ip=1_A$, $jq=1_B$ and $pi+qj=1_C$.\\
We \emph{claim} that $\xymatrix@1{A\,\ar@<2pt>[r]^{[i]}   &\,C\,\ar@<2pt>[l]^{[p]} \ar@<-2pt>[r]_{[q]}&\,B\ar@<-2pt>[l]_{[j]}}$ defines a direct sum in $\sA/J$ as well. Indeed we have $[i][p]=[ip]=1_A$, $[j][q]=[jq]=1_B$ and $[p][i]+[q][j]=[pi+qj]=1_C$. This proves the \emph{claim}.
\end{proof}

\begin{lemdefn}[Residue class functor] \label{def:residueclassfunctor}
The \emph{residue class functor} $\R_{\sA,J} \colon \sA \to \sA/J$ of $J$ in $\sA$ is defined by $\R_{\sA,J}(A):=A$ for $A \in \Ob \sA$ and $\R_{\sA,J}(a):=[a]$ for $a \in \Mor \sA$. The residue class functor is additive. We write $\R:=\R_{\sA,J}$ if unambiguous.
\end{lemdefn}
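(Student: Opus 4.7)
The plan is to verify two things: that $\R_{\sA,J}$ is a well-defined functor, and that it respects addition of morphisms. Both are essentially immediate from the construction of the factor category in Lemma/Definition \ref{lemdefn:factorcategory} and Proposition \ref{prop:factorcategoryadditive}, so the proof will be short.

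First I would check functoriality. Preservation of identities follows by definition: for $A \in \Ob \sA$, we have $\R_{\sA,J}(1_A) = [1_A]$, which is precisely the identity morphism of $A$ in $\sA/J$ according to the definition in Lemma/Definition \ref{lemdefn:factorcategory}. For composition, given $\bigl(\xymatrix@1{A \ar[r]^a & B \ar[r]^b & C}\bigr)$ in $\sA$, the definition of composition in $\sA/J$ gives $\R_{\sA,J}(a)\R_{\sA,J}(b) = [a][b] = [ab] = \R_{\sA,J}(ab)$.

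Next I would check additivity. Given $\xymatrix@1{A \ar@<2pt>[r]^{a} \ar@<-2pt>[r]_{a'} & B}$ in $\sA$, we compute
\begin{align*}
\R_{\sA,J}(a+a') = [a+a'] = [a] + [a'] = \R_{\sA,J}(a) + \R_{\sA,J}(a'),
\end{align*}
where the middle equality is the definition of the group structure on $\Hom_{\sA/J}(A,B) = \Hom_\sA(A,B)/\Hom_{\sA,J}(A,B)$ as a factor group; this group structure was used in the proof of Proposition \ref{prop:factorcategoryadditive}.

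There is no real obstacle here — every equality above is a direct unfolding of definitions. The only point that required genuine work, namely the well-definedness of composition on residue classes (which uses both ideal axioms \hyperref[I1]{(I1)} and \hyperref[I2]{(I2)}), has already been dispatched in the proof of Lemma/Definition \ref{lemdefn:factorcategory}, so I can simply invoke those definitions.
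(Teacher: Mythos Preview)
Your proof is correct and matches the paper's approach essentially line for line: the paper also verifies $\R(ab)=[ab]=[a][b]=\R(a)\R(b)$, $\R(1_A)=[1_A]=1_{\R(A)}$, and $\R(a+a')=[a+a']=[a]+[a']=\R(a)+\R(a')$ by direct unfolding of the definitions in Lemma/Definition~\ref{lemdefn:factorcategory}. Your additional remarks about where the well-definedness of composition was handled are accurate but not present in the paper's terse version.
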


\begin{proof}
We have $\R(ab)=[ab]=[a][b]=\R(a)\R(b)$, $\R(1_A)=[1_A]=1_{\R(A)}$ and $\R(a+a')=[a+a']=[a]+[a']=\R(a)+\R(a')$ for 
$\xymatrix@1{A\,\ar@<2pt>[r]^{[a]} \ar@<-2pt>[r]_{[a']}  &\,B\,\ar[r]^{[b]}&\,C}$ in 
%$\sA/J$, we have 
%$\xymatrix@1{A\,\ar@<2pt>[r]^{a} \ar@<-2pt>[r]_{b}  &\,B}$ 
in $\sA$.
\end{proof}

\begin{thrm}[Universal property of the factor category] \label{thrm:upf}
Recall that $\sA$ is an additive category and $J$ is an ideal in $\sA$.
Suppose given an additive category $\sB$.

\begin{itemize}
	\item[(a)] Suppose given an additive functor $F \colon \sA \to \sB$ with $F(j)=0$ for $j \in J$. 
	
	There exists a unique additive functor $\hat F \colon \sA/J \to \sB$ satisfying $\hat F \circ \R=F$.
\begin{align*}
\xymatrix{\sA \ar[r]^{F} \ar[d]_{\R}& \sB \\ \sA /J \ar[ur]_{\hat F}}%\ar@{.>}[ur]_{\hat F}}
\end{align*}
The equations $\hat F(X)=F(X)$ and $\hat F([f]) = F(f)$ hold  for $X \in  \Ob \sA$ and $f \in \Mor \sA$ .
	\item[(b)] Suppose given additive functors $F,G \colon \sA \to \sB$ with $F(j)=0$ and $G(j)=0$ for $j \in J$ and a transformation $\alpha \colon F \Rightarrow G$.
	
	There exists a unique transformation $\hat \alpha \colon \hat F \Rightarrow \hat G$ satisfying $\hat \alpha \star \R = \alpha$.

\begin{align*}
\xymatrix{\sA  \ar@/^/[rr]^F="F"\ar@/_/[rr]_G="G" \ar@2"F"+<0mm,-2.6mm>;"G"+<0mm,2.4mm>_\alpha \ar[dd]_{\R}& &\sB\\\\ 
 \sA /J \ar@/^/[uurr]^*+<-2mm,-2mm>{\scriptstyle \hat F}="hF"\ar@/_/[uurr]_*+<-2.3mm,-2.3mm>{\scriptstyle \hat G}="hG" \ar@2"hF"+<1.7mm,-1.8mm>;"hG"+<-1.3mm,1.6mm>_{\hat \alpha}
 }
\end{align*}
	
	The equation $\hat \alpha_X=\alpha_X$ holds  for $X \in \Ob \sA$.
	%\item[(c)] 
\end{itemize} 
\end{thrm}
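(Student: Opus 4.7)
For part (a), the equation $\hat F \circ \R = F$ forces the definition on objects and on morphisms: since every object of $\sA/J$ is an object of $\sA$, I must set $\hat F(A) := F(A)$, and since every morphism of $\sA/J$ is of the form $\R(f) = [f]$ for some $f \in \Mor \sA$, I must set $\hat F([f]) := F(f)$. This handles uniqueness at once. The only nontrivial point for existence is well-definedness on morphisms: if $[f] = [g]$ then $f - g \in \Hom_{\sA,J}(A,B) \subseteq J$, so the hypothesis together with additivity of $F$ yields $F(f) - F(g) = F(f-g) = 0$, hence $F(f) = F(g)$. Functoriality and additivity of $\hat F$ are then inherited from $F$ via representatives, e.g.\ $\hat F([f][g]) = \hat F([fg]) = F(fg) = F(f)F(g) = \hat F([f])\hat F([g])$ and similarly for identities and sums. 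The equation $\hat F \circ \R = F$ is immediate from the definitions.

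For part (b), the equation $\hat\alpha \star \R = \alpha$ means $(\hat\alpha \star \R)_X = \hat F(\R(1_X))\, \hat\alpha_{\R(X)} = \hat\alpha_X$ (using $\hat F(1_X) = 1_{F(X)}$), so I am forced to set $\hat\alpha_X := \alpha_X$ for $X \in \Ob \sA = \Ob(\sA/J)$; this gives uniqueness. For existence, the only thing to verify is naturality of $\hat\alpha$ with respect to morphisms in $\sA/J$. Given $[f] \colon X \to Y$ in $\sA/J$, I need
\begin{align*}
\hat F([f])\,\hat\alpha_Y \;=\; \hat\alpha_X\,\hat G([f]),
\end{align*}
which unwinds to $F(f)\alpha_Y = \alpha_X G(f)$, and this holds by naturality of $\alpha$ applied to $f \in \Mor \sA$.

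The proof is almost entirely bookkeeping; no step is a genuine obstacle. The only point requiring the hypothesis $F(j) = 0$ for $j \in J$ (and additivity of $F$) is the well-definedness of $\hat F$ on morphism classes in part (a). Everything else, including additivity of $\hat F$ and naturality of $\hat\alpha$, is transported verbatim from $\sA$ to $\sA/J$ via the residue class functor $\R$.
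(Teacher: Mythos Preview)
Your proof is correct and follows essentially the same approach as the paper's own proof: forced definitions give uniqueness, well-definedness of $\hat F$ on morphisms uses $F(f-g)=0$ from additivity and the hypothesis, and all remaining verifications (functoriality, additivity, naturality of $\hat\alpha$) are transported from $\sA$ via representatives. The only cosmetic difference is that the paper computes $(\hat\alpha \star \R)_X$ using the form $\hat\alpha_{\R(X)}\,\hat G(1_{\R(X)})$ rather than your $\hat F(\R(1_X))\,\hat\alpha_{\R(X)}$, but both are valid by the definition of $\star$.
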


\begin{proof} 
Ad (a).	Given a functor $\tilde{F} \colon \sA/J \to \sB$ with $\tilde F \circ \R = F$, we necessarily have $\tilde F(X) = \tilde F(\R(X))= F(X)$ for $X \in \Ob \sA$ and $\tilde F([f])=\tilde F(\R(f))=F(f)$ for $f \in \Mor \sA$, so $\tilde F$ is determined by $F$ and therefore unique.
	
	Now we show the existence of such a functor. 
	
	Let $\hat F(X):=F(X)$ for $X \in \Ob\sA$ and $\hat F([f]):=F(f)$ for $f \in \Mor \sA$.
	
	$\hat F$ is well-defined since in case $[f]=[g]$ for $f,g\in \Mor \sA$, we have $f-g \in J$ and therefore $F(f-g)=0$. The additivity of $F$ then implies $F(f)=F(g)$.
	
	We verify that $\hat F$ is an additive functor:

Given $\xymatrix@1{X\,\ar@<2pt>[r]^{[f]} \ar@<-2pt>[r]_{[f']}  &\,Y\,\ar[r]^{[g]}&\,Z}$ in $\sA/J$, we have 
\begin{align*}
\hat F([f][g])=\hat F([fg])=F(fg)=F(f)F(g)=\hat F([f]) \hat F([g]), \quad \hat F(1_X) = F(1_X)=1_{\hat F(X)}
\end{align*}
and
\begin{align*}
\hat F([f]+[f'])=\hat F([f+f'])=F(f+f')=F(f)+F(f')=\hat F([f]) + \hat F([f']).
\end{align*}

Ad (b). Given a transformation $\tilde \alpha \colon \hat F \to \hat G$ with $\tilde \alpha  \star \R = \alpha$, we necessarily have $\tilde \alpha_X = \tilde \alpha_{\R(X)} = \tilde \alpha_{\R(X)} \hat G( 1_{\R(X)}) = (\tilde \alpha \star \R)_X = \alpha_X$ for $X \in \Ob \sA$, so $\tilde \alpha$ is determined by $\alpha$ and therefore unique.

Now we show the existence of such a transformation. 
	
	Let $\hat \alpha_X = \alpha_X$ for $X \in \Ob\sA$. This defines a transformation since
	\begin{align*}
	\hat \alpha_X \hat G([f]) = \alpha_X G(f) = F(f) \alpha_Y = \hat F([f]) \hat \alpha_Y
	\end{align*}
	holds for $\xymatrix@1{X\,\ar[r]^{[f]}&\,Y}$ in $\sA/J$.
	
	We have $(\hat \alpha \star \R)_X = \hat \alpha_{\R(X)} \hat G (1_{\R(X)}) = \alpha_X$ for $X \in \Ob \sA$, so $\hat \alpha \star \R = \alpha$ holds.
\end{proof}

\chapter{Adelman's construction}
\thispagestyle{empty}
\label{ch:adelmansconstruction}
Suppose given an additive category $\sA$.

%\begin{defn}[the poset $\Delta_2$]
%Consider the poset $\Delta_2:=[0,2]$ equipped with the partial order inherited from $\Z$. We regard $\Delta_2$ as a category with three objects $0$, $1$, $2$ and three non-identical morphisms $\xymatrix@!0{0 \ar[r] &1}$, $\xymatrix@!0{1 \ar[r] &2}$, $\xymatrix@!0{0 \ar[r] &2}$.
%\end{defn}

Recall that $\Delta_2$ denotes the poset category of $[0,2] \subseteq \Z$. This category has three objects $0$, $1$, $2$ and three non-identical morphisms $\xymatrix@1@C=4mm{0\, \ar[r] &\,1}$, $\xymatrix@1@C=4mm{1\, \ar[r] &\,2}$, $\xymatrix@1@C=4mm{0 \,\ar[r] &\,2}$. Cf. convention \ref{conv:delta}.

\smallskip

Recall that $\sA^{\Delta_2}$ denotes the functor category of all functors from $\Delta_2$ to $\sA$, cf. convention \ref{conv:functorcategory}.

\section{Definitions, notations and duality}

\label{sec:definitions}

\begin{nota}
\label{nota:functor}
Given a diagram $\xymatrix@1{ A_0 \,\ar[r]^{a_0} &\, A_1\, \ar[r]^{a_1} & \, A_2}$ in $\sA$, we obtain a functor $A \in \Ob(\sA^{\Delta_2})$ by setting $A(0):=A_0$, $A(1):=A_1$, $A(2):=A_2$, $A(\xymatrix@1@C=4mm{0\, \ar[r] &\,1}):=a_0$, $A(\xymatrix@1@C=4mm{1\, \ar[r] \,&2}):=a_1$, $A(\xymatrix@1@C=4mm{0 \,\ar[r] &\,2}):=a_0a_1$, $A(\xymatrix@1@C=4mm{0 \,\ar[r]^{1} &\,0}):=1_{A_0}$, $A(\xymatrix@1@C=4mm{1\, \ar[r]^{1} &\,1}):=1_{A_1}$ and $A(\xymatrix@1@C=4mm{2 \,\ar[r]^{1} &\,2}):=1_{A_2}$.

For $A \in \Ob(\sA^{\Delta_2})$, we therefore set $A_0:=A(0)$, $A_1:=A(1)$, $A_2:=A(2)$, $a_0:=A(\xymatrix@1@C=4mm{0\, \ar[r] &\,1})$, $a_1:=A(\xymatrix@1@C=4mm{1 \,\ar[r] &\,2})$ and write $A=\big(\xymatrix@1{ A_0\, \ar[r]^{a_0} &\, A_1 \,\ar[r]^{a_1} &\,  A_2}\big)$.

\smallskip

For a transformation $f \in \Hom_{\sA^{\Delta_2}}(A,B)$, we write $f=(f_0,f_1,f_2)$ instead of $f=
(f_i)_{i \in \Ob(\Delta_2)}$. We also write
\begin{align*}
 \left(\begin{smallmatrix}{\xymatrix{A\ar[d]^f \\B}}\end{smallmatrix} \! \right) = \left(\begin{smallmatrix}\xymatrix{A_0\ar[d]^{f_0} \ar[r]^{a_0} & A_1\ar[d]^{f_1} \ar[r]^{a_1} &  A_2\ar[d]^{f_2} \\ B_0 \ar[r]^{b_0} & B_1 \ar[r]^{b_1} &  B_2}\end{smallmatrix}\right).
\end{align*}

\smallskip

Given $A,B \in \Ob(\sA^{\Delta_2})$ and morphisms $f_0 \colon A_0 \to B_0$, $f_1 \colon A_1 \to B_1$, $f_2 \colon A_2 \to B_2$ in $\sA$ satisfying $a_0 f_1 = f_0 b_0$ and $ a_1 f_2 = f_1 b_1 $, we obtain a transformation $f = (f_0,f_1,f_2)\in \Hom_{\sA^{\Delta_2}}(A,B)$.
\end{nota}

\begin{defn}[null-homotopic]
Suppose given $A,B \in \Ob(\sA^{\Delta_2})$.

A transformation $f \in \Hom_{\sA^{\Delta_2}}(A,B)$ is said to be \emph{null-homotopic} if there exist morphisms

$s \colon A_1 \to B_0$ and $t \colon A_2 \to B_1$ in $\sA$ satisfying $s b_0 + a_1 t = f_1$.
\end{defn}

\begin{lemdefn}[Adelman category] \label{defn:adel}
The functor category $\sA^{\Delta_2}$ is an additive category and $\J_\sA := \{f\in \Mor(\sA^{\Delta_2}) \colon f \text{ is null-homotopic}\}$ is an ideal in $\sA^{\Delta_2}$, cf. definition \ref{defn:ideal}. We call 
\begin{align*}\Adel(\sA) := \sA^{\Delta_2} / \J_\sA
\end{align*}
 the \emph{Adelman category} of $\sA$, cf. definition \ref{lemdefn:factorcategory}. The category $\Adel(\sA)$ is additive.
 
 We abbreviate $\R_{\sA} := \R_{\sA^{\Delta_2}, \J_{\sA}}$, cf. definition \ref{def:residueclassfunctor}.
\end{lemdefn}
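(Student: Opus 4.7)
The plan is to verify the three assertions in sequence; all three are short given the machinery already developed. First, the additivity of $\sA^{\Delta_2}$ is immediate from remark \ref{rem:functorcategory}~(a), since $\sA$ is additive by assumption. Third, once $\J_{\sA}$ has been shown to be an ideal, the additivity of $\Adel(\sA) = \sA^{\Delta_2}/\J_{\sA}$ follows from proposition \ref{prop:factorcategoryadditive}. So the substantive content lies in the middle step: showing that $\J_{\sA}$ is an ideal in the sense of definition \ref{defn:ideal}.

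For condition (I2), I would first observe that the zero transformation is null-homotopic via $s = 0$ and $t = 0$, so $\Hom_{\sA^{\Delta_2},\J_{\sA}}(A,B)$ is nonempty. If $f,g \in \Hom_{\sA^{\Delta_2}}(A,B)$ are null-homotopic with homotopies $(s_f,t_f)$ and $(s_g,t_g)$ respectively, then the bilinearity of composition yields $(s_f - s_g) b_0 + a_1 (t_f - t_g) = f_1 - g_1 = (f-g)_1$, exhibiting $f-g$ as null-homotopic. Hence $\Hom_{\sA^{\Delta_2},\J_{\sA}}(A,B)$ is a subgroup of $\Hom_{\sA^{\Delta_2}}(A,B)$.

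For condition (I1), suppose given $\big(A \stackrel{f}{\to} B \stackrel{g}{\to} C \stackrel{h}{\to} D\big)$ in $\sA^{\Delta_2}$ with $g$ null-homotopic via $s \colon B_1 \to C_0$ and $t \colon B_2 \to C_1$ satisfying $s c_0 + b_1 t = g_1$. The key computation is
\[
(fgh)_1 \;=\; f_1 g_1 h_1 \;=\; f_1(s c_0 + b_1 t) h_1 \;=\; (f_1 s)(c_0 h_1) + (f_1 b_1)(t h_1).
\]
Now the naturality of $h$ supplies $c_0 h_1 = h_0 d_0$, and the naturality of $f$ supplies $f_1 b_1 = a_1 f_2$. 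Substituting yields
\[
(fgh)_1 \;=\; (f_1 s h_0)\, d_0 + a_1\, (f_2 t h_1),
\]
so the pair $s' := f_1 s h_0 \colon A_1 \to D_0$ and $t' := f_2 t h_1 \colon A_2 \to D_1$ constitutes a null-homotopy for $fgh$. Hence $fgh \in \J_{\sA}$.

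The argument is largely mechanical, and I do not expect any real obstacle; the only point requiring attention is to align the naturality squares for the outer transformations $f$ and $h$ so that the defining homotopy equation for $fgh$ reassembles from the null-homotopy for $g$ after conjugation. Once (I1) and (I2) are in hand, the lemma/definition is complete via the two citations listed above.
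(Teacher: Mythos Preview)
Your proof is correct and follows essentially the same approach as the paper: both verify (I1) by conjugating the given null-homotopy $(s,t)$ for the middle morphism to obtain the null-homotopy $(f_1 s h_0,\, f_2 t h_1)$ for the composite, and both verify (I2) by subtracting homotopies. The only differences are notational (your middle morphism is called $g$ rather than $j$) and in the order of presentation.
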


\begin{proof}
The additivity of $\sA^{\Delta_2}$ is inherited from $\sA$, cf. remark \ref{rem:functorcategory}.

We show that $\J_{\sA}$ is an ideal in $\sA^{\Delta_2}$. Suppose given $\xymatrix@1{A\,\ar[r]^{f}&\,B\,\ar[r]^j&\,C\,\ar[r]^g&\,D}$ in $\sA^{\Delta_2}$. Since $j$ is null-homotopic, there exist morphisms $s \colon B_1 \to C_0$ and $t \colon B_2  \to C_1$ in $\sA$ with $s c_0 + b_1 t = j_1$.

Using $f_1sg_0\colon A_1 \to D_0$ and $f_2tg_1 \colon A_2 \to D_1$, we obtain 
\begin{align*}
f_1sg_0 d_0 + a_1 f_2tg_1 &= f_1s c_0 g_1 + f_1b_1tg_1\\
&=f_1(sc_0 + b_1 t )g_1\\
&=f_1j_1g_1\\
&=(fjg)_1.
\end{align*}
Therefore $fjg$ is null-homotopic, so condition \hyperref[I1]{(I1)} holds.
\begin{align*}
\xymatrix{A_0\ar[d]^{f_0} \ar[rr]^{a_0} && A_1\ar[d]^{f_1} \ar[rr]^{a_1} &&  A_2\ar[d]^{f_2} \\ B_0 \ar[rr]^{b_0} \ar[d]^{j_0}&& B_1\ar[d]^{j_1} \ar[rr]^{b_1} \ar[dll]_{s}&&  B_2\ar[d]^{j_2}\ar[dll]_{t}\\ C_0\ar[d]^{g_0} \ar[rr]_{c_0} && C_1\ar[d]^{g_1} \ar[rr]_{c_1} &&  C_2 \ar[d]^{g_2} \\ D_0 \ar[rr]^{d_0} && D_1 \ar[rr]^{d_1} &&  D_2}
\end{align*}
We have $\Hom_{\sA^{\Delta_2},\J_{\sA}}(A,B) \neq \emptyset$, since $0=0_{A,B}$ is null-homotopic: $0_{A_1,B_0}b_0 + a_1 0_{A_2,B_1} = 0_{A_1,B_1}=0_1$. 

Suppose given $x,y \in \Hom_{\sA^{\Delta_2},J}(A,B)$. Since $x$ and $y$ are null-homotopic, there exist morphisms $s,u \colon A_1 \to B_0$ and $t,v \colon A_2 \to B_1$ in $\sA$ with $sb_0 + a_1t=x_1$ and $ub_0+ a_1v=y_1$.

Using $s-u \colon A_1 \to B_0$ and $t-v \colon A_2 \to B_1$ in $\sA$, we obtain
\begin{align*}
(s-u)b_0 + a_1(t-v)
=\ (sb_0 +a_1t) -(ub_0 + a_1v)
=\ x_1-y_1
=\ (x-y)_1.
\end{align*}
Therefore $x-y$ is null-homotopic, so condition \hyperref[I2]{(I2)} holds.

We conclude that $\J_{\sA}$ is in fact an ideal in $\sA^{\Delta_2}$. 

Proposition \ref{prop:factorcategoryadditive} now implies that $\Adel(\sA)$ is additive.
\end{proof}

\begin{nota}
\label{nota:adel}
Since every morphism in $\Adel(\sA)$ is of the form $[f]$ with $f\in \Mor(\sA^{\Delta_2})$, we will always suppose $f \in \Mor(\sA^{\Delta_2})$ when writing $[f]$ in $\Adel(\sA)$, cf. notation \ref{nota:brackets}. Often we shortly write $[f]=[f_0,f_1,f_2]$ instead of $[f]=[(f_0,f_1,f_2)]$ for $\xymatrix@1{ A \,\ar[r]^{[f]} & \,B}$ in $\Adel(\sA)$.
%\begin{align*}
%\xymatrix{ A_0 \ar[r]^{a_0} \ar[d]^{f_0}& A_1 \ar[r]^{a_1} \ar[d]^{f_1} &  A_2  \ar[d]^{f_2} \\
%B_0 \ar[r]^{b_0} & B_1 \ar[r]^{b_1} &  B_2}
%\end{align*}
\end{nota}

\begin{rem} \mbox{} \label{rem:adelbasic}
\begin{itemize}
	\item[(a)] By applying the previous notation \ref{nota:adel}, we have \begin{align*}
& 1_A=[1_{A_0},1_{A_1},1_{A_2}], \quad 0_{A,B}=[0_{A_0,B_0},0_{A_1,B_1},0_{A_2,B_2}],
\\& [f_1,f_2,f_3]\cdot [g_1,g_2,g_3] = [f_1g_1,f_2g_2,f_3g_3] \text{ and }\\ &[f_1,f_2,f_3]+[h_1,h_2,h_3]=[f_1+h_1,f_2+h_2,f_3+h_3]
\end{align*}
for $\xymatrix@1{A \,\ar@<0.4ex>[r]^{[f]}\ar@<-0.4ex>[r]_{[h]}&\,B\,\ar[r]^{[g]}&\,C}$ in $\Adel(\sA)$.
\item[(b)] A morphism $\xymatrix@1{ A \,\ar[r]^{[f]} & \,B}$ in $\Adel(\sA)$ is equal to $0$ if and only if there exist morphisms

 $s\colon A_1 \to B_0$ and $t\colon A_2 \to B_1$ in $\sA$ with $sb_0+a_1t=f_1$.
\begin{align*}
\xymatrix{ A_0 \ar[r]^{a_0} \ar[d]_{f_0}& A_1 \ar[dl]_{s} \ar[r]^{a_1} \ar[d]_{f_1} &  A_2\ar[dl]_{t}  \ar[d]^{f_2} \\
B_0 \ar[r]_{b_0} & B_1 \ar[r]_{b_1} &  B_2}
\end{align*}
Consequently, $[f]=[h]$ is true for $\xymatrix@1{A \,\ar@<0.4ex>[r]^{[f]}\ar@<-0.4ex>[r]_{[h]}&\,B}$ in $\Adel(\sA)$ if and only if there exist morphisms $s\colon A_1 \to B_0$ and $t\colon A_2 \to B_1$ in $\sA$ with $sb_0+a_1t=f_1-h_1$.
\item[(c)] An object $A \in \Ob(\Adel(\sA))$ is a zero object if and only if $1_A$ is equal to $0_A$. Consequently, $A \in \Ob(\Adel(\sA))$ is a zero object if and only if there exists morphism $s \colon A_1 \to A_0$ and $t \colon A_2 \to A_1$ with $s a_0 + a_1 t = 1$.
\begin{align*}
\xymatrix{
A_0 \ar@<0.4ex>[r]^{a_0} & A_1 \ar@<0.4ex>[l]^s  \ar@<0.4ex>[r]^{a_1} & A_2 \ar@<0.4ex>[l]^t  
}
\end{align*}
\end{itemize}
\end{rem}

\begin{lemdefn}[Inclusion functor] \label{defn:inclusionfunctor}
The functor $\tilde \I_{\sA} \colon \sA \to \sA^{\Delta_2}$ shall be defined by\\
 $\tilde \I_{\sA}(A): =\big(\xymatrix@1{0\,\ar[r]^0&\,A\,\ar[r]^0&\,0}\big)$ for $A \in \Ob\sA$ and $\tilde \I_{\sA}(f) := (0,f,0) \in \Hom_{\sA^{\Delta_2}}(\tilde\I_{\sA}(A),\tilde\I_{\sA}(B))$ for $\xymatrix@1{A\, \ar[r]^f&\,B}$ in $\sA$. The functor $\tilde \I_{\sA}$ is additive.

Let $\I_{\sA} := \R_{\sA} \circ \tilde \I_{\sA}$. We call $\I_{\sA}$ the \emph{inclusion functor} of $\sA$.

The inclusion functor $\I_{\sA}$ is a full and faithful additive functor.

Let $\I_{\sA}(\sA)$ be the full subcategory of $\Adel(\sA)$ defined by $\Ob(\I_{\sA}(\sA)):=\{\I_{\sA}(A) \colon A \in \Ob\sA\}$.
\end{lemdefn}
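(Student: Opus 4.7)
The plan is to verify the four claims in the order they appear: first that $\tilde \I_{\sA}$ is a well-defined functor, then that $\tilde \I_{\sA}$ is additive, then that $\I_{\sA}$ is additive, and finally that $\I_{\sA}$ is full and faithful.

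For the functoriality of $\tilde \I_{\sA}$, I would first check that $(0, f, 0)$ is indeed a transformation from $\tilde\I_{\sA}(A)$ to $\tilde\I_{\sA}(B)$: the naturality squares that need to commute have the form $0 \cdot f = 0 \cdot 0$ and $0 \cdot 0 = f \cdot 0$, both holding trivially because every arrow involved factors through a zero object. Functoriality then follows componentwise: $\tilde\I_{\sA}(fg) = (0, fg, 0) = (0, f, 0)(0, g, 0)$ and $\tilde\I_{\sA}(1_A) = (0, 1_A, 0) = 1_{\tilde\I_{\sA}(A)}$. Additivity is the same calculation: $\tilde\I_{\sA}(f+g) = (0, f+g, 0) = (0, f, 0) + (0, g, 0)$ using the componentwise description of addition in $\sA^{\Delta_2}$ from remark \ref{rem:functorcategory}(d). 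Additivity of $\I_{\sA}$ then follows because $\R_{\sA}$ is additive by lemma/definition \ref{def:residueclassfunctor}, and composites of additive functors are additive.

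For faithfulness, suppose $f, g \in \Hom_{\sA}(A, B)$ satisfy $\I_{\sA}(f) = \I_{\sA}(g)$, i.e.\ $[0, f, 0] = [0, g, 0]$ in $\Adel(\sA)$. By remark \ref{rem:adelbasic}(b), there exist $s \colon A \to 0$ and $t \colon 0 \to B$ in $\sA$ such that $s \cdot 0 + 0 \cdot t = f - g$; the left side is $0$, so $f = g$. For fullness, any morphism $\I_{\sA}(A) \to \I_{\sA}(B)$ in $\Adel(\sA)$ is represented by some transformation $(h_0, h_1, h_2)$ in $\sA^{\Delta_2}$. Since $h_0$ and $h_2$ are morphisms $0 \to 0$, they must equal $0$, so $[h_0, h_1, h_2] = [0, h_1, 0] = \I_{\sA}(h_1)$.

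There is no real obstacle here; the argument is a bookkeeping check that rests entirely on the fact that the only morphisms out of or into a zero object are zero, combined with the null-homotopy criterion of remark \ref{rem:adelbasic}(b). The one spot to be careful about is that when verifying fullness I need every triple $(h_0, h_1, h_2)$, not just those of the form $(0, h_1, 0)$, to represent a morphism of the form $\I_{\sA}(h_1)$; this is automatic because the zero components of $\tilde\I_{\sA}(A)$ and $\tilde\I_{\sA}(B)$ force $h_0$ and $h_2$ to be zero already in $\sA^{\Delta_2}$, so no passage to the quotient $\Adel(\sA)$ is even needed for that step.
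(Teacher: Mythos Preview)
Your proof is correct and follows essentially the same approach as the paper: componentwise verification that $\tilde\I_{\sA}$ is a well-defined additive functor, additivity of $\I_{\sA}$ via composition with $\R_{\sA}$, fullness by noting that the outer components of any representative are forced to be zero, and faithfulness via the null-homotopy criterion of remark~\ref{rem:adelbasic}(b). The only cosmetic difference is the order in which fullness and faithfulness are treated.
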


\begin{proof}
We have $0_{0,A} \cdot f = 0_{0,B}= 0_{0,0} \cdot 0_{0,B}$, $0_{A,0} \cdot 0_{0,0} = 0_{A,0}= f \cdot 0_{B,0}$, 
\begin{align*}
\tilde \I_\sA(fg)=(0,fg,0) =(0,f,0)(0,g,0) = \tilde\I_\sA(f)\tilde\I_\sA(g),
\end{align*}
\begin{align*}
\tilde \I_\sA(1_A)=(0,1_A,0)=(1_0,1_A,1_0)=1_{\tilde \I_\sA(A)}\end{align*} 
and 
\begin{align*}
\tilde \I_\sA(f+h) = (0,f+h,0)=(0,f,0)+(0,h,0)=\tilde \I_\sA(f)+ \tilde \I_\sA(h)
\end{align*}
 for $\xymatrix@1{A\,\ar@<0.4ex>[r]^{f}\ar@<-0.4ex>[r]_{h}&\,B\,\ar[r]^g&\,C}$ in $\sA$, therefore $\tilde \I_\sA$ is a well-defined additive functor.
 
We conclude that $\I_{\sA}$ is an additive functor as well.

Now suppose given $A,B \in \Ob \sA$. Any morphism from $\I_\sA(A)$ to $\I_\sA(B)$ in $\Adel(\sA)$ is necessarily of the form $[0,f,0]=\I_\sA(f)$ with $f \in \Hom_\sA(A,B)$, which shows that $\I_\sA$ is full.

Suppose $\I_\sA(f)=\I_\sA(g)$ for $f,g \in \Hom_\sA(A,B)$. We necessarily have $0_{A,0}$ and $0_{0,B}$ with\linebreak $0_{A,0}\cdot 0_{0,B}+0_{A,0}\cdot 0_{0,B}=f-g$, cf. notation \ref{nota:adel}~(b). We conclude that $f=g$ holds, so $\I_\sA$ is faithful. 
\end{proof}

\begin{rem} \label{rem:adelideal}
Let 
\begin{align*}
\oS_\sA := &\Big\{\big( \xymatrix{X \ar[r]^1 & X\ar[r]^s&Y  }\big) \in \Ob(\sA^{\Delta_2})  \colon \big(\xymatrix{X\ar[r]^s&Y}\big) \in \Mor \sA \Big\}\\
& \cup \Big\{\big( \xymatrix{X \ar[r]^s & Y\ar[r]^1&Y  }\big) \in \Ob(\sA^{\Delta_2}) \colon \big(\xymatrix{X\ar[r]^s&Y}\big) \in \Mor \sA \Big\}.
\end{align*}
Recall that $\langle \oS_\sA \rangle$ denotes the full additive subcategory of $\sA^{\Delta_2}$ generated by $\oS_\sA$, cf convention~\ref{conv:genfulladdsubcategory}. 

The equation
\begin{align*}\F_{\langle  \oS_\sA \rangle} = \J_\sA
\end{align*}
holds and therefore
\begin{align*}
\Adel(\sA) = \sA^{\Delta_2}/\langle \oS_\sA \rangle
\end{align*}
is true, cf. remark \ref{rem:factorfullsubcategory}.
\end{rem}

\begin{proof}
Suppose given $\big(\xymatrix@1{X\,\ar[r]^s&\,Y}\big) \in \Mor \sA$. Then $\big(\xymatrix@1{X\, \ar[r]^1 &\, X\, \ar[r]^s & \,Y}\big)$ is a zero object in $\Adel(\sA)$ since we have $1 \colon X \to X$ and $0 \colon Y \to X$ with $1 \cdot 1 + s \cdot 0 = 1$, cf. remark \ref{rem:adelbasic}~(c). Similarly, we have $0 \colon Y \to X$ and $1 \colon Y \to Y$ with $0 \cdot s + 1 \cdot 1 = 1$, so $\big(\xymatrix@1{X \,\ar[r]^s&\,Y\, \ar[r]^1&\,Y} \big)$ is also a zero object in $\Adel(\sA)$.

Therefore objects in $\oS_\sA$ are zero objects in $\Adel(\sA)$ and since direct sums of zero objects are again zero objects, all objects in $\langle \oS_\sA \rangle$ are zero objects in $\Adel(\sA)$.

Suppose given $\big(\xymatrix@1{A\,\ar[r]^f&\,B}\big) \in \F_{\langle \oS_\sA \rangle}$. \\ There exists a factorisation $\big(\xymatrix@1{A \,\ar[r]^{f} & \,B}\big) = \big(\xymatrix@1{A \,\ar[r]^{g} &\,N\, \ar[r]^{h} & \,B}\big)$ in $\sA^{\Delta_2}$ with $N \in \Ob(\langle \oS_\sA \rangle)$. In $\Adel(\sA)$ we have $[f] = [g] [h]$, so $[f]$ factors through a zero object in $\Adel(\sA)$. We conclude that $[f]=0$ holds, so $f \in \J_\sA$ is true.

\smallskip

Conversely, suppose given $\big(\xymatrix@1{A\,\ar[r]^f&\,B}\big) \in \J_\sA$. We have a diagram
\begin{align*}
\xymatrix{ A_0 \ar[r]^{a_0} \ar[d]_{f_0}& A_1 \ar[dl]_{s} \ar[r]^{a_1} \ar[d]_{f_1} &  A_2\ar[dl]_{t}  \ar[d]^{f_2} \\
B_0 \ar[r]_{b_0} & B_1 \ar[r]_{b_1} &  B_2}
\end{align*}
in $\sA$ with $sb_0+a_1t=f_1$. \\
Consider the objects $N:=\big(\xymatrix@1{A_0 \,\ar[r]^{a_0 a_1} &\, A_2 \,\ar[r]^1 & \,A_2}\big)$ and $S:=\big(\xymatrix@1{A_1\, \ar[r]^1 &\, A_1\, \ar[r]^{a_1} &\, A_2}\big)$ in $\oS_\sA$.

We get a factorisation of $f$ through $N \oplus S \in \Ob(\langle \oS_\sA \rangle)$ in $\sA^{\Delta_2}$ as follows.
\begin{align*}
\xymatrix{A_0\ar[d]_{\sm{1 &a_0}} \ar[rr]^{a_0} && A_1\ar[d]^{\sm{a_1 & 1}} \ar[rr]^{a_1} &&  A_2\ar[d]^{\sm{1&1}}
 \\ 
 A_0 \oplus A_1 \ar[rr]^{\sm{a_0 a_1&0\\0&1}} \ar[d]_{\sm{f_0-a_0 s \\ s }}&& A_2 \oplus A_1 \ar[d]^{\sm{t\\s b_0}} \ar[rr]^{\sm{1&0\\0&a_1}} &&  A_2 \oplus A_2 \ar[d]^{\sm{t b_1 \\ f_2 - t b_1}}
 \\
  B_0 \ar[rr]^{b_0} && B_1 \ar[rr]^{b_1} &&  B_2}
\end{align*}
This is a well-defined factorisation, since the following equations hold.
\begin{align*}
&\sm{1 &a_0} \sm{a_0 a_1&0\\0&1} = \sm{a_0 a_1 & a_0}  =  a_0 \sm{a_1 & 1}
\\
& \sm{a_1 & 1} \sm{1&0\\0&a_1} = \sm{a_1&a_1}= a_1 \sm{1&1} 
\\
&\sm{f_0-a_0 s \\ s } b_0= \sm{f_0b_0 - a_0s b_0\\s b_0}=\sm{a_0 f_1 - a_0sb_0\\ sb_0} =\sm{a_0(sb_0+a_1t) - a_0 s b_0 \\ sb_0} =\sm{a_0a_1t\\s b_0} = \sm{a_0 a_1&0\\0&1}\sm{t\\s b_0} 
\\
&\sm{t\\s b_0}  b_1 = \sm{tb_1 \\ s b_0 b_1} = \sm{t b_1 \\(sb_0 + a_1 t) b_1 - a_1 t b_1}  = \sm{t b_1 \\f_1 b_1 - a_1 t b_1} = \sm{t b_1 \\a_1f_2 - a_1 t b_1}= \sm{1&0\\0&a_1}\sm{t b_1 \\ f_2 - t b_1}
\\
&\sm{1 & a_0} \sm{f_0 - a_0s \\ s} = f_0 - a_0 s + a_0 s = f_0
\\
&\sm{a_1 & 1}\sm{t\\sb_0} = a_1 t + s b_0 = f_1 
\\
&\sm{1 & 1}\sm{tb_1\\f_2 - t b_1} = t b_1 + f_2 - t b_1 = f_2
\end{align*}
We conclude that $f \in \F_{\langle \oS_\sA \rangle}$ is true.
\end{proof}

\begin{lem}\label{lem:iso} Suppose given $A \in \Ob(\Adel(\sA))$ and isomorphisms $\varphi_i \colon A_i \to B_i$  in $\sA$ for $i \in [0,2]$. Let $B:=\big(\xymatrix@1{B_0\, \ar[rr]^{\varphi_0^{-1} a_0 \varphi_1}&&\,B_1\, \ar[rr]^{\varphi_1^{-1}a_1\varphi_2}&&\,B_2}\big) \in \Ob(\Adel(\sA))$. Then $[\varphi_0,\varphi_1,\varphi_2]\colon A \to B$ is an isomorphism in $\Adel(\sA)$ with inverse $\left[\varphi_0^{-1},\varphi_1^{-1},\varphi_2^{-1}\right]$.
\end{lem}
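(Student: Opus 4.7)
The plan is to verify the claim by working first in $\sA^{\Delta_2}$ and then descending to $\Adel(\sA)$ via the residue class functor $\R_\sA$. The key observation is that if $(\varphi_0,\varphi_1,\varphi_2)$ is already a mutually inverse pair of morphisms in $\sA^{\Delta_2}$, then applying $\R_\sA$ (which is additive and hence sends isomorphisms to isomorphisms) immediately yields the statement in $\Adel(\sA)$.

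First, I would check that the triple $(\varphi_0,\varphi_1,\varphi_2)$ actually defines a morphism from $A$ to $B$ in $\sA^{\Delta_2}$, using the criterion from Notation \ref{nota:functor}. The two naturality squares require
\[
a_0 \cdot (\varphi_0^{-1} a_0 \varphi_1) = \varphi_0 \cdot (\varphi_0^{-1} a_0 \varphi_1) \cdot \ldots
\]
— more precisely, one needs $\varphi_0\cdot(\varphi_0^{-1}a_0\varphi_1)=a_0\cdot\varphi_1$ and $\varphi_1\cdot(\varphi_1^{-1}a_1\varphi_2)=a_1\cdot\varphi_2$, both of which reduce to tautologies after canceling $\varphi_0^{-1}\varphi_0$ resp.\ $\varphi_1^{-1}\varphi_1$. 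Symmetrically, for $(\varphi_0^{-1},\varphi_1^{-1},\varphi_2^{-1})\colon B\to A$ one checks $(\varphi_0^{-1}a_0\varphi_1)\varphi_1^{-1}=\varphi_0^{-1}\cdot a_0$ and $(\varphi_1^{-1}a_1\varphi_2)\varphi_2^{-1}=\varphi_1^{-1}\cdot a_1$, again immediate.

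Next, I would compose componentwise in $\sA^{\Delta_2}$: Remark \ref{rem:adelbasic}~(a) (or rather its lift to $\sA^{\Delta_2}$ before passing to classes) gives
\[
(\varphi_0,\varphi_1,\varphi_2)(\varphi_0^{-1},\varphi_1^{-1},\varphi_2^{-1}) = (1_{A_0},1_{A_1},1_{A_2}) = 1_A
\]
and dually the other composite equals $1_B$. Applying $\R_\sA$, which is an additive functor by Lemma/Definition \ref{def:residueclassfunctor}, yields
\[
[\varphi_0,\varphi_1,\varphi_2]\cdot[\varphi_0^{-1},\varphi_1^{-1},\varphi_2^{-1}]=1_A,\qquad [\varphi_0^{-1},\varphi_1^{-1},\varphi_2^{-1}]\cdot[\varphi_0,\varphi_1,\varphi_2]=1_B,
\]
so $[\varphi_0,\varphi_1,\varphi_2]$ is an isomorphism with the claimed inverse.

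There is no real obstacle here; the only subtle point is to remember that $B$ was \emph{defined} so that $\varphi_0,\varphi_1,\varphi_2$ automatically give a transformation — the morphisms of $B$ were conjugated precisely for this purpose — so the verification is a direct cancellation rather than anything requiring the null-homotopy description of zero morphisms in $\Adel(\sA)$.
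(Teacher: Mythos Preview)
Your proof is correct and follows essentially the same approach as the paper's: verify the naturality squares so that $(\varphi_0,\varphi_1,\varphi_2)$ and $(\varphi_0^{-1},\varphi_1^{-1},\varphi_2^{-1})$ are well-defined mutually inverse morphisms in $\sA^{\Delta_2}$, then pass to $\Adel(\sA)$. The paper's version is simply terser, omitting the explicit componentwise computation of the composites and the invocation of $\R_\sA$.
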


\begin{proof}
We have $\varphi_0 (\varphi_0^{-1} a_0 \varphi_1 )= a_0 \varphi_1$ and $\varphi_1 (\varphi_1^{-1}a_1\varphi_2) = a_1 \varphi_2$.
%$\varphi_0^{-1}a_0 = \varphi_0^{-1}a_0 \varphi_1 \varphi_1^{-1}$ and $\varphi_1^{-1}a_1 = \varphi_1^{-1}a_1 \varphi_2 \varphi_2^{-1}$.
Therefore $[\varphi_0,\varphi_1,\varphi_2]$ and, consequently, $[\varphi_0^{-1},\varphi_1^{-1},\varphi_2^{-1}]$ are in fact morphisms in $\Adel(\sA)$, mutually inverse. 
%The morphism $[\varphi_0^{-1},\varphi_1^{-1},\varphi_2^{-1}]$ is the inverse of $[\varphi_0,\varphi_1,\varphi_2]$ since $[\varphi_0,\varphi_1,\varphi_2][\varphi_0^{-1},\varphi_1^{-1},\varphi_2^{-1}]=[1,1,1]$ and $[\varphi_0^{-1},\varphi_1^{-1},\varphi_2^{-1}][\varphi_0,\varphi_1,\varphi_2]=[1,1,1]$ hold.
\end{proof}

\begin{thrmdefn} \label{defn:duality} Recall that $\sA$ is an additive category.
We have an isomorphism of categories $\D_\sA \colon \Adel(\sA)^{\op} \to \Adel(\sA^{\op})$ with $\D_\sA (A)=\Big( \xymatrix@1{A_2\, \ar[r]^{a_1^{\op}} & \,A_1\,\ar[r]^{a_0^{\op}}&\,A_0}\Big)$
and $\D_\sA([f]^{\op})=\big[f_2^{\op},f_1^{\op},f_0^{\op}\big]$ for $\xymatrix@1{A\,\ar[r]^{[f]}&\,B}$ in $\Adel(\sA)$.

Note that $\D_\sA$ is additive, as follows from $\D_\sA$ being an isomorphism between additive categories or from the construction given in the proof.
\end{thrmdefn}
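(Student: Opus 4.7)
The plan is to verify that $\D_\sA$ is well-defined on objects, well-defined on equivalence classes of morphisms, functorial, and admits an inverse constructed by applying the same construction with $\sA$ replaced by $\sA^{\op}$.

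First, I would check that $\D_\sA(A)=\big(\xymatrix@1{A_2\,\ar[r]^{a_1^{\op}}&\,A_1\,\ar[r]^{a_0^{\op}}&\,A_0}\big)$ is a well-defined object of $\Adel(\sA^{\op})$ via notation \ref{nota:functor}: the morphisms $a_1^{\op}$ and $a_0^{\op}$ are composable in $\sA^{\op}$. For a representative $f=(f_0,f_1,f_2)\colon A\to B$ in $\sA^{\Delta_2}$, the triple $(f_2^{\op},f_1^{\op},f_0^{\op})\colon \D_\sA(B)\to \D_\sA(A)$ is a transformation in $(\sA^{\op})^{\Delta_2}$: the commutativity requirements $b_1^{\op}f_1^{\op}=f_2^{\op}a_1^{\op}$ and $b_0^{\op}f_0^{\op}=f_1^{\op}a_0^{\op}$ in $\sA^{\op}$ translate, using $x^{\op}y^{\op}=(yx)^{\op}$, to $f_1b_1=a_1f_2$ and $f_0b_0=a_0f_1$ in $\sA$, which hold because $f$ is a transformation.

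Next I would verify independence of the representative by transporting null-homotopies. If $s\colon A_1\to B_0$ and $t\colon A_2\to B_1$ satisfy $sb_0+a_1t=f_1$ in $\sA$, then $s':=t^{\op}\colon B_1\to A_2$ and $t':=s^{\op}\colon B_0\to A_1$ in $\sA^{\op}$ give $s'\cdot a_1^{\op}+b_0^{\op}\cdot t'=(a_1t)^{\op}+(sb_0)^{\op}=f_1^{\op}$, witnessing that $(f_2^{\op},f_1^{\op},f_0^{\op})$ is null-homotopic in $(\sA^{\op})^{\Delta_2}$, cf. remark \ref{rem:adelbasic}~(b). Functoriality is then checked componentwise: $1_{A_i}^{\op}=1_{A_i}$ yields $\D_\sA(1_A^{\op})=1_{\D_\sA(A)}$, and $(f_ig_i)^{\op}=g_i^{\op}f_i^{\op}$ together with the rule $[g]^{\op}\cdot[f]^{\op}=[fg]^{\op}$ for composition in $\Adel(\sA)^{\op}$ gives $\D_\sA([g]^{\op}\cdot[f]^{\op})=\D_\sA([g]^{\op})\cdot\D_\sA([f]^{\op})$.

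For the isomorphism, applying the same construction to $\sA^{\op}$ yields an additive functor $\D_{\sA^{\op}}\colon\Adel(\sA^{\op})^{\op}\to\Adel(\sA)$, and its opposite $\D_{\sA^{\op}}^{\op}\colon\Adel(\sA^{\op})\to\Adel(\sA)^{\op}$ serves as inverse to $\D_\sA$. Both composites act as the identity by direct inspection, using $(a_i^{\op})^{\op}=a_i$ on objects (cf. remark \ref{rem:dual}~(a),~(g)) and the same double-reversal on each morphism component. The main obstacle is purely notational: keeping straight the two roles of $(-)^{\op}$, namely the involution between $\sA$ and $\sA^{\op}$ on one hand and the reversal between $\Adel(\sA)$ and its opposite on the other, and tracking correctly how the null-homotopy witnesses and commutativity squares transport across these identifications.
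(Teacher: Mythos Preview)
Your proposal is correct and follows essentially the same approach as the paper. The only minor difference is that the paper first defines an additive functor $D\colon\sA^{\Delta_2}\to\Adel(\sA^{\op})^{\op}$, checks that null-homotopic morphisms vanish, and then invokes the universal property of the factor category (theorem~\ref{thrm:upf}) to obtain $\hat D$ before setting $\D_\sA:=(\hat D)^{\op}$; you instead verify well-definedness on equivalence classes directly, which amounts to the same computation.
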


\begin{proof}
The functor $D \colon \sA^{\Delta_2} \to \Adel(\sA^{\op})^{\op}$ shall be defined by $D(A)=\Big( \xymatrix@1{A_2 \,\ar[r]^{a_1^{\op}} &\, A_1\,\ar[r]^{a_0^{\op}}&\,A_0}\Big)$ and $D(f)=\big[f_2^{\op},f_1^{\op},f_0^{\op}\big]^{\op}$ for $\xymatrix@1{A\,\ar[r]^{f}&\,B}$ in $\sA^{\Delta_2}$.

This is a well-defined additive functor because we have $b_1^{\op} f_1^{\op} = (f_1 b_1)^{\op} = (a_1 f_2)^{\op} = f_2^{\op} a_1^{\op}$, $b_0^{\op} f_0^{\op} = (f_0 b_0)^{\op} =(a_0 f_1)^{\op} = f_1^{\op} a_0^{\op}$,
\begin{align*}
D(fg)&=\big[(f_2g_2)^{\op},(f_1g_1)^{\op},(f_0g_0)^{\op}\big]^{\op} \\
&=\big[g_2^{\op}f_2^{\op},g_1^{\op}f_1^{\op},g_0^{\op}f_0^{\op}\big]^{\op} \\
&= \Big(\big[g_2^{\op},g_1^{\op},g_0^{\op}\big] \big[f_2^{\op},f_1^{\op},f_0^{\op}\big] \Big)^{\op}\\
&= \big[f_2^{\op},f_1^{\op},f_0^{\op}\big]^{\op}\big[g_2^{\op},g_1^{\op},g_0^{\op}\big]^{\op} \\
&= D(f)D(g),
\end{align*}
$D(1_A)= [1^{\op},1^{\op},1^{\op}]^{\op}= 1_{D(A)}$ and
\begin{align*}
D(f+h)&=\big[(f_2\mathord+h_2)^{\op},(f_1\mathord+h_1)^{\op},(f_0\mathord+h_0)^{\op}\big]^{\op}\\
&= \big[f_2^{\op}\mathord+h_2^{\op},f_1^{\op}\mathord+h_1^{\op},f_0^{\op}\mathord+h_0^{\op}\big]^{\op}\\
&=\Big(\big[f_2^{\op},f_1^{\op},f_0^{\op}\big]+\big[h_2^{\op},h_1^{\op},h_0^{\op}\big]\Big)^{\op}\\
&=\big[f_2^{\op},f_1^{\op},f_0^{\op}\big]^{\op}+\big[h_2^{\op},h_1^{\op},h_0^{\op}\big]^{\op}\\
&=D(f)+D(h)
\end{align*}
for $\xymatrix@1{A \,\ar@<0.4ex>[r]^{f}\ar@<-0.4ex>[r]_{h}&\,B\,\ar[r]^{g}&\,C}$ in $\sA^{\Delta_2}$.

Suppose given $\xymatrix@1{A\, \ar[r]^f &\,B}$ in $\sA^{\Delta_2}$ such that $f$ is null-homotopic with morphisms $s \colon A_1 \to B_0$ and $t \colon A_2 \to B_1$ satisfying $sb_0+a_1t=f_1$. \\ Using $t^{\op}\colon B_1 \to A_2$ and $s^{\op} \colon B_0 \to A_1$, we obtain $t^{\op}a_1^{\op} + b_0^{\op}s^{\op} = (a_1t+sb_0)^{\op} = f_1^{\op}$. Therefore $\big[f_2^{\op},f_1^{\op},f_0^{\op}\big]=0$ and $D(f)=\big[f_2^{\op},f_1^{\op},f_0^{\op}\big]^{\op}=0$ hold.

\hyperref[thrm:upf]{Theorem} \ref{thrm:upf} gives the additive functor $\hat D\colon \Adel(\sA) \to \Adel(\sA^{\op})^{\op}$ with $\hat D \circ \R_{\sA} = D$. We set 
\begin{align*}
\D_\sA := \big(\hat D\big)^{\op}\colon \Adel(\sA)^{\op} \to \Adel(\sA^{\op}).
\end{align*}

It is now sufficient to show that $\D_{\sA}$ and $(\D_{\sA^{\op}})^{\op}$ are mutually inverse.

Suppose given $\xymatrix@1{A \,\ar[r]^{[f]} & \,B}$ in $\Adel(\sA)$. We have
\begin{align*}
((\D_{\sA^{\op}})^{\op} \circ \D_\sA) (A) = (\D_{\sA^{\op}})^{\op} \Big(\xymatrix{A_2 \ar[r]^{a_1^{\op}} & A_1 \ar[r]^{a_0^{\op}} & A_0}\Big) = \big( \xymatrix{A_0 \ar[r]^{a_0} & A_1 \ar[r]^{a_1} & A_2} \big) = A
\end{align*}
and
\begin{align*}
((\D_{\sA^{\op}})^{\op} \circ \D_\sA) ([f]^{\op}) = (\D_{\sA^{\op}})^{\op}([f_2^{\op},f_1^{\op},f_0^{\op}]) = [f_0,f_1,f_2]^{\op}=[f]^{\op}.
\end{align*}
Suppose given 
\begin{align*}
\xymatrix{
A_2 \ar[d]_{f_2^{\op}} \ar[r]^{a_1^{\op}} & A_1\ar[d]^{f_1^{\op}} \ar[r]^{a_0^{\op}} & A_0 \ar[d]^{f_0^{\op}}\\
B_2\ar[r]^{b_1^{\op}} & B_1 \ar[r]^{b_0^{\op}} & B_0 
}
\end{align*}
in $(\sA^{\op})^{\Delta_2}$.

We have
\begin{align*}
(  \D_\sA \circ   (\D_{\sA^{\op}})^{\op}) \Big(\xymatrix{ A_2 \ar[r]^{a_1^{\op}} & A_1 \ar[r]^{a_0^{\op}} & A_0} \Big) &=  \D_\sA \big(\xymatrix{A_0 \ar[r]^{a_0} & A_1 \ar[r]^{a_1} & A_2}\big) \\&= \Big(\xymatrix{ A_2 \ar[r]^{a_1^{\op}} & A_1 \ar[r]^{a_0^{\op}} & A_0} \Big)
\end{align*}
and
\begin{align*}
(  \D_\sA \circ   (\D_{\sA^{\op}})^{\op})([f_2^{\op},f_1^{\op},f_0^{\op}]) = \D_\sA( [f_0,f_1,f_2]^{\op}) = [f_2^{\op},f_1^{\op},f_0^{\op}].
\end{align*}
\end{proof}

%\big(\xymatrix{A_0\ar[r]^{a_0}&A_1\ar[r]^{a_1}&A_2}\big)

\section{Kernels and cokernels}
\label{sec:kernelsandcokernels}

\begin{thrmdefn} \label{thrm:kernelcokernel}
Recall that $\sA$ is an additive category.
Suppose given $\xymatrix@1{ A \,\ar[r]^{f} &\, B}$ in $\sA^{\Delta_2}$.
\begin{itemize}
\item[(a)] We set, using \hyperref[nota:functor]{notation} \ref{nota:functor},
\begin{align*}
\K(f):=\Bigg(\xymatrix@C=12mm{ A_0 \oplus B_0 \ar[r]^{\sm{a_0&0\\0&1}} & A_1 \oplus B_0 \ar[r]^{\sm{a_1&f_1\\0&-b_0}} &  A_2 \oplus B_1} \Bigg)\in \Ob(\Adel(\sA))=\Ob(\sA^{\Delta_2})
\end{align*} 
 and $\k(f):=\big(\sm{1\\0},\sm{1\\0},\sm{1\\0}\big) \in \Hom_{\sA^{\Delta_2}}(\K(f),A)$. 
 
The morphism $[\k(f)] \in \Hom_{\Adel(\sA)}(\K(f),A)$ is a kernel of $[f]\in \Hom_{\Adel(\sA)}(A,B)$.

\item[(b)] We set
\begin{align*}
\C(f):=\Bigg(\xymatrix@C=12mm{ B_0 \oplus A_1 \ar[r]^{\sm{b_0&0\\f_1&-a_1}} & B_1 \oplus A_2 \ar[r]^{\sm{b_1&0\\0&1}} &  B_2 \oplus A_2} \Bigg)\in \Ob(\Adel(\sA))= \Ob(\sA^{\Delta_2})
\end{align*} 
 and $\c(f):=(\sm{1&0},\sm{1&0},\sm{1&0}) \in \Hom_{\sA^{\Delta_2}}(B,\C(f))$.
 
 The morphism $[\c(f)]\in \Hom_{\Adel(\sA)}(B,\C(f))$ is a cokernel of $[f]\in \Hom_{\Adel(\sA)}(A,B)$.
\end{itemize}

\begin{align*}
\xymatrix{
 A_0 \oplus B_0 \ar[d]_{\sm{1\\0}} \ar[rr]^{\sm{a_0&0\\0&1}} && A_1 \oplus B_0 \ar[d]_{\sm{1\\0}} \ar[rr]^{\sm{a_1&f_1\\0&-b_0}} &&  A_2 \oplus B_1 \ar[d]_{\sm{1\\0}} \\
 A_0 \ar[d]_{f_0} \ar[rr]^{a_0} && A_1 \ar[d]_{f_1} \ar[rr]^{a_1} && A_2 \ar[d]_{f_2} \\
 B_0 \ar[d]_{\sm{1&0}} \ar[rr]^{b_0} && B_1 \ar[d]_{\sm{1&0}} \ar[rr]^{b_1} && B_2 \ar[d]_{\sm{1&0}} \\
 B_0 \oplus A_1 \ar[rr]_{\sm{b_0&0\\f_1&-a_1}} && B_1 \oplus A_2 \ar[rr]_{\sm{b_1&0\\0&1}} &&  B_2 \oplus A_2
}
\end{align*}

\end{thrmdefn}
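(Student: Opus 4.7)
The plan is to prove part (a) directly and obtain part (b) by duality, using the isomorphism of categories $\D_\sA\colon \Adel(\sA)^{\op}\to\Adel(\sA^{\op})$ of Theorem/Definition~\ref{defn:duality}. A cokernel of $[f]$ in $\Adel(\sA)$ is the same data as a kernel of $[f]^{\op}$ in $\Adel(\sA)^{\op}$, which $\D_\sA$ transports to a kernel of $\D_\sA([f]^{\op})=[f_2^{\op},f_1^{\op},f_0^{\op}]$ in $\Adel(\sA^{\op})$. Applying (a) there and unwinding $\D_\sA^{-1}$ produces exactly the object $\C(f)$ together with the morphism $\c(f)$.

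For (a) itself, I first check that $\k(f)$ is a well-defined morphism in $\sA^{\Delta_2}$: the two squares collapse to $\sm{a_0&0\\0&1}\sm{1\\0}=\sm{1\\0}a_0$ and $\sm{a_1&f_1\\0&-b_0}\sm{1\\0}=\sm{1\\0}a_1$, both immediate. Then I show $[\k(f)][f]=0$ by exhibiting a null-homotopy of $\k(f)\cdot f$, whose middle component is $\sm{f_1\\0}$: the morphisms $s:=\sm{0\\1}\colon A_1\oplus B_0\to B_0$ and $t:=\sm{0\\1}\colon A_2\oplus B_1\to B_1$ satisfy $sb_0+\sm{a_1&f_1\\0&-b_0}t=\sm{0\\b_0}+\sm{f_1\\-b_0}=\sm{f_1\\0}$, as required.

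For the universal property, suppose $[g]\colon X\to A$ satisfies $[g][f]=0$, so by remark~\ref{rem:adelbasic}~(b) there exist $s\colon X_1\to B_0$ and $t\colon X_2\to B_1$ with $sb_0+x_1t=g_1f_1$. I propose the candidate $u=(u_0,u_1,u_2)\colon X\to \K(f)$ given by
\begin{align*}
u_0:=\left(\begin{smallmatrix}g_0 & x_0 s\end{smallmatrix}\right),\qquad u_1:=\left(\begin{smallmatrix}g_1 & s\end{smallmatrix}\right),\qquad u_2:=\left(\begin{smallmatrix}g_2 & t\end{smallmatrix}\right).
\end{align*}
Verifying that the two squares for $u$ commute reduces to the naturality identities $x_0g_1=g_0a_0$ and $x_1g_2=g_1a_1$ together with the homotopy identity $g_1f_1-sb_0=x_1t$. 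Since $u_i\cdot\sm{1\\0}=g_i$ for each $i$, one gets $u\cdot\k(f)=g$ on the nose and hence $[u][\k(f)]=[g]$.

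Uniqueness reduces to showing that if $[v]\colon X\to\K(f)$ satisfies $[v][\k(f)]=0$, then $v$ is itself null-homotopic. Writing $v_i=\sm{v_i^{(1)} & v_i^{(2)}}$, the hypothesis supplies $\sigma\colon X_1\to A_0$ and $\tau\colon X_2\to A_1$ with $\sigma a_0+x_1\tau=v_1^{(1)}$. Setting $s':=\sm{\sigma & v_1^{(2)}}$ and $t':=\sm{\tau & 0}$, a short calculation gives $s'\sm{a_0&0\\0&1}+x_1t'=v_1$, witnessing that $[v]=0$. The main obstacle is not conceptual but bookkeeping: one has to keep track of matrix entries carefully and in particular spot the slightly asymmetric second column $x_0s$ in $u_0$ (rather than $0$) that is forced by reconciling the first naturality square with the homotopy data.
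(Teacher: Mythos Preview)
Your proof is correct and follows essentially the same approach as the paper: the well-definedness check, the null-homotopy $\sm{0\\1},\sm{0\\1}$ witnessing $[\k(f)][f]=0$, the explicit lift $\big(\sm{g_0 & x_0 s},\sm{g_1 & s},\sm{g_2 & t}\big)$, and the monomorphism argument via $s'=\sm{\sigma & v_1^{(2)}}$, $t'=\sm{\tau & 0}$ all match the paper exactly, as does the duality reduction for part~(b). The only cosmetic difference is that the paper proves monicity before the factorisation property, whereas you do the factorisation first and phrase monicity as the uniqueness step.
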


\begin{proof} Ad (a).
We have $\k(f) \in \Mor(\sA^{\Delta_2})$, since the equations
 $\sm{1\\0}a_0 = \sm{a_0\\0}=\sm{a_0&0\\0&1} \sm{1\\0}$ and $\sm{1\\0}a_1 = \sm{a_1\\0}=\sm{a_1&f_1\\0&-b_0}\sm{1\\0}$ hold.

Next, we show that $[\k(f)][f]$ is equal to $0$.

Using $\sm{0\\1} \colon A_1 \oplus B_0 \to B_0$ and $\sm{0\\1} \colon A_2 \oplus B_1 \to B_1$ in $\sA$, we obtain
\begin{align*}
\sm{0\\1} b_0 + \sm{a_1&f_1\\0&-b_0}\sm{0\\1} = \sm{0\\b_0} + \sm{f_1\\-b_0} = \sm{f_1\\0} = \sm{1\\0}f_1=\left(\big{(}\sm{1\\0},\sm{1\\0},\sm{1\\0}\big{)}f\right)_1=(\k(f)f)_1
\end{align*}
and therefore $[\k(f)][f]=[\k(f)f]=0$.
\begin{align*}
\xymatrix{
 A_0 \oplus B_0 \ar[d]_{\sm{1\\0}} \ar[r]^{\sm{a_0&0\\0&1}} 
& A_1 \oplus B_0 \ar[ddl]|\hole^(.65){\sm{0\\1}} \ar[d]^{\sm{1\\0}} \ar[r]^{\sm{a_1&f_1\\0&-b_0}} 
&  A_2 \oplus B_1 \ar[ddl]|\hole^(.65){\sm{0\\1}} \ar[d]^{\sm{1\\0}}
\\
A_0 \ar[d]_{f_0} \ar[r]^(.3){a_0} & A_1\ar[d]_{f_1}  \ar[r]^(.3){a_1} & A_2\ar[d]^{f_2} 
\\
B_0 \ar[r]_{b_0} & B_1 \ar[r]_{b_1} & B_2
}
\end{align*}

\smallskip

We show that $[\k(f)]$ is a monomorphism:

Suppose given $[g]=\left[ \sm{g_0^0&g_0^1},\sm{g_1^0&g_1^1},\sm{g_2^0&g_2^1}\right] \colon C \to \K(f)$ in $\Adel(\sA)$ with $[g][\k(f)]=0$. There exist morphisms $s \colon C_1 \to A_0$ and $t \colon C_2 \to A_1$ in $\sA$ satisfying $sa_0+c_1t=(g\k(f))_1=\sm{g_1^0&g_1^1}\sm{1\\0}=g_1^0$.

Using $\sm{s &g_1^1} \colon C_1 \to A_0 \oplus B_0$ and $\sm{t&0} \colon C_2 \to A_1 \oplus B_0$ in $\sA$, we obtain
\begin{align*}
\sm{s & g_1^1}\sm{a_0&0\\0&1} + c_1 \sm{t&0} = \sm{sa_0&g_1^1}+\sm{c_1t&0}= \sm{sa_0+c_1t&g_1^1} = \sm{g_1^0&g_1^1}.
\end{align*}
 We conclude that $[g]=0$.
\begin{align*}
\xymatrix{
C_0 \ar[d]_{\sm{g_0^0&g_0^1}}\ar[r]^{c_0} 
&  C_1 \ar@{-}{+(-11,-13)}_s%\ar@{-}{+(-22,-26)}
\save[]+(-16.94,-20.02) \ar{+(-5.06,-5.98)}  \restore
\ar[d]^{\sm{g_1^0&g_1^1}} \ar[r]^{c_1} 
& C_2  \ar@{-}{+(-11,-13)}_t
\save[]+(-18.92,-22.36) \ar{+(-3.08,-3.64)}  \restore
\ar[d]^{\sm{g_2^0&g_2^1}}
\\
 A_0 \oplus B_0  \ar[d]_{\sm{1\\0}} \ar[r]_{\sm{a_0&0\\0&1}} 
& A_1 \oplus B_0  \ar[d]_{\sm{1\\0}} \ar[r]_*+[d]{\sm{a_1&f_1\\0&-b_0}} 
&  A_2 \oplus B_1  \ar[d]^{\sm{1\\0}}
\\
A_0 \ar[r]^{a_0}& A_1 \ar[r]^{a_1} & A_2
}
\end{align*}

\smallskip

The factorisation property is seen as follows.

Suppose given $[g] \colon C \to A$ in $\Adel(\sA)$ with $[g][f]=0$. There exist morphisms $s \colon C_1 \to B_0$ and $t \colon C_2 \to B_1$ in $\sA$ satisfying $sb_0+c_1t=(gf)_1=g_1f_1$.
\begin{align*}
\xymatrix{
 C_0 \ar[d]_{g_0} \ar[r]^{c_0} 
& C_1 \ar[ddl]|\hole^(.65){s} \ar[d]^{g_1} \ar[r]^{c_1} 
&  C_2 \ar[ddl]|\hole^(.65){t} \ar[d]^{g_2}
\\
A_0 \ar[d]_{f_0} \ar[r]^(.3){a_0} & A_1\ar[d]_{f_1}  \ar[r]^(.3){a_1} & A_2\ar[d]^{f_2} 
\\
B_0 \ar[r]_{b_0} & B_1 \ar[r]_{b_1} & B_2
}
\end{align*}
Now we set $\left[ \sm{g_0 & c_0s}, \sm{g_1&s}, \sm{g_2 &t}\right] \colon C \to \K(f)$. We get in fact a well-defined morphism in $\Adel(\sA)$, since the equations
\begin{align*}
\sm{g_0&c_0s}\sm{a_0&0\\0&1}=\sm{g_0a_0&c_0s}=\sm{c_0g_1&c_0s}= c_0 \sm{g_1&s}
\end{align*} and 
\begin{align*}
\sm{g_1&s}\sm{a_1&f_1\\0&-b_0}=\sm{g_1a_1&g_1f_1-sb_0} =\sm{c_1g_2&c_1t}  =c_1\sm{g_2&t}
\end{align*}
hold.

Finally, we have
\begin{align*}
\left[ \sm{g_0 & c_0s}, \sm{g_1&s}, \sm{g_2 &t}\right] \cdot [\k(f)] = \left[ \sm{g_0 & c_0s}\sm{1\\0}, \sm{g_1&s}\sm{1\\0}, \sm{g_2 &t}\sm{1\\0}\right] =[g_0,g_1,g_2]=[g].
\end{align*}
\begin{align*}
\xymatrix{
C_0 \ar[d]_{\sm{g_0 & c_0s}} \ar[r]^{c_0} & C_1 \ar[d]_{\sm{g_1&s}} \ar[r]^{c_1} & C_2 \ar[d]_{\sm{g_2 & t}}
\\
  A_0 \oplus B_0  \ar[d]_{\sm{1\\0}} \ar[r]_{\sm{a_0&0\\0&1}} 
& A_1 \oplus B_0  \ar[d]_{\sm{1\\0}} \ar[r]_*+[d]{\sm{a_1&f_1\\0&-b_0}} 
&  A_2 \oplus B_1  \ar[d]^{\sm{1\\0}}
\\
A_0 \ar[r]^{a_0}&A_1\ar[r]^{a_1}&A_2}
\end{align*}
The uniqueness of the induced morphism from $C$ to $\K(f)$ follows from $[\k(f)]$ being a monomorphism.

Ad (b). A  kernel of $\D_{\sA}([f]^{\op})$ is given by 
\begin{align*}
&\left[ \sm{1\\0},\sm{1\\0},\sm{1\\0}\right] = \left[\sm{1&0}^{\op},\sm{1&0}^{\op},\sm{1&0}^{\op}\right] \colon \\& \hspace{5cm} \Bigg(\xymatrix@C=17mm{B_2 \oplus A_2 \ar[r]^{\sm{b_1^{\op} & 0 \\ 0 & 1}} & B_1 \oplus A_2 \ar[r]^{\sm{b_0^{\op}&f_1^{\op}\\0&-a_1^{\op}}} & B_0 \oplus A_1
} \Bigg) \to \D_{\sA}(B).
\end{align*}
Therefore $\left[\sm{1&0},\sm{1&0},\sm{1&0}\right] \colon B \to \C(f)$ is a cokernel of $[f]$. Cf. definition~\ref{defn:duality}.
\end{proof}

% \begin{thrmdefn}
% Given $\xymatrix{ A \ar[r]^{f} & B}$ in $\sA^{\Delta_2}$, we set, using  \hyperref[nota:functor]{notation} \ref{nota:functor},
% \begin{align*}
% \C(f):=\Bigg(\xymatrix{ B_0 \oplus A_1 \ar[r]^{\sm{b_0&0\\f_1&-a_1}} & B_1 \oplus A_2 \ar[r]^{\sm{b_1&0\\0&1}} &  B_2 \oplus A_2} \Bigg)\in \Ob(\Adel(\sA))= \Ob(\sA^{\Delta_2})
% \end{align*} 
% and $\c(f)=(\sm{1&0},\sm{1&0},\sm{1&0}) \in \Hom_{\sA^{\Delta_2}}(B,\C(f))$.
 
% The morphism $[\c(f)]\in \Hom_{\Adel(\sA)}(B,\C(f))$ is a cokernel of $[f]\in \Hom_{\Adel(\sA)}(A,B)$.
% \end{thrmdefn}

\begin{kor} \label{kor:monoepicrit}
Suppose given $\xymatrix@1{A \,\ar[r]^{[f]} &\,B}$ in $\Adel(\sA)$.
\begin{itemize}
\item[(a)] The morphism $[f]$ is a monomorphism if and only if there exist morphisms $s\colon A_1 \to A_0$, $t\colon B_0 \to A_0$, $u\colon A_2 \to A_1$ and $v\colon B_1 \to A_1$ in $\sA$ satisfying $s a_0+a_1 u + f_1 v=1$ and $t a_0 = b_0 v$.
\begin{align*}
\xymatrix{
A_0 \ar@<0.4ex>[r]^{a_0} \ar@<0.4ex>[d]^{f_0} & A_1 \ar@<0.4ex>[r]^{a_1} \ar@<0.4ex>[l]^s \ar@<0.4ex>[d]^{f_1}& A_2\ar[d]^{f_2}\ar@<0.4ex>[l]^u
\\
B_0 \ar@<0.4ex>[u]^t \ar[r]^{b_0}& B_1\ar@<0.4ex>[u]^v \ar[r]^{b_1} & B_2
}
\end{align*}
\item[(b)] The morphism $[f]$ is an epimorphism if and only if there exist morphisms $s \colon B_1 \to B_0$, $t\colon B_1 \to A_1$, $u\colon B_2 \to  B_1$ and $v\colon B_2 \to A_2$ in $\sA$ satisfying $s b_0 + t f_1 + b_1 u =1$ and $t a_1=b_1 v$.
\begin{align*}
\xymatrix{
A_0 \ar[r]^{a_0} \ar[d]^{f_0}& A_1 \ar[r]^{a_1}\ar@<0.4ex>[d]^{f_1} & A_2\ar@<0.4ex>[d]^{f_2}
\\
B_0  \ar@<0.4ex>[r]^{b_0}& B_1\ar@<0.4ex>[l]^s \ar@<0.4ex>[u]^t \ar@<0.4ex>[r]^{b_1} & B_2 \ar@<0.4ex>[l]^u \ar@<0.4ex>[u]^v
}
\end{align*}
\end{itemize}
\end{kor}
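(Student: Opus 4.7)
The plan is to use the explicit kernel and cokernel from \hyperref[thrm:kernelcokernel]{Theorem \ref{thrm:kernelcokernel}} together with the zero-object criterion in \hyperref[rem:adelbasic]{Remark \ref{rem:adelbasic}}~(c). Recall that in any preadditive category with kernels, a morphism is a monomorphism if and only if its kernel morphism is zero; in $\Adel(\sA)$ this means $[f]$ is monomorphic iff $[\k(f)] = 0$, which, since $[\k(f)]$ is itself a (mono)morphism with codomain $A$, is equivalent to $\K(f)$ being a zero object in $\Adel(\sA)$.

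For part (a), I will therefore apply \hyperref[rem:adelbasic]{Remark \ref{rem:adelbasic}}~(c) to $\K(f)=\bigl(\xymatrix@1{A_0\oplus B_0\ar[r]^{\sm{a_0&0\\0&1}}&A_1\oplus B_0\ar[r]^{\sm{a_1&f_1\\0&-b_0}}&A_2\oplus B_1}\bigr)$. This says $\K(f)$ is a zero object iff there exist morphisms $\sigma\colon A_1\oplus B_0\to A_0\oplus B_0$ and $\tau\colon A_2\oplus B_1\to A_1\oplus B_0$ with $\sigma\,\sm{a_0&0\\0&1}+\sm{a_1&f_1\\0&-b_0}\,\tau=1$. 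Writing $\sigma=\sm{s&*\\t'&*}$ with $s\colon A_1\to A_0$, $t'\colon B_0\to A_0$, and $\tau=\sm{u&*\\v&*}$ with $u\colon A_2\to A_1$, $v\colon B_1\to A_1$, the $(1,1)$ and $(2,1)$ entries of the matrix equation yield precisely $sa_0+a_1u+f_1v=1$ and $t'a_0=b_0v$, giving the necessity (relabeling $t:=t'$). Conversely, given morphisms $s,t,u,v$ as in the statement, I plug them into the first column of $\sigma$ and $\tau$ and take all other entries to be zero; then the $(1,2)$ and $(2,2)$ entries of the matrix equation become trivial and we obtain a zero-object witness for $\K(f)$.

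For part (b), the cleanest route is duality: by \hyperref[defn:duality]{Theorem/Definition \ref{defn:duality}}, $[f]$ is an epimorphism in $\Adel(\sA)$ iff $\D_\sA([f]^{\op})=[f_2^{\op},f_1^{\op},f_0^{\op}]$ is a monomorphism in $\Adel(\sA^{\op})$. Applying part (a) to this morphism in $\sA^{\op}$ produces morphisms in $\sA^{\op}$ which, when read as morphisms in $\sA$ via the $(-)^{\op}$ correspondence, reassemble into the four morphisms $s,t,u,v$ of the statement satisfying $sb_0+tf_1+b_1u=1$ and $ta_1=b_1v$. Alternatively, I can argue directly: by \hyperref[thrm:kernelcokernel]{Theorem \ref{thrm:kernelcokernel}}~(b), $[f]$ is epimorphic iff $\C(f)$ is a zero object, and expanding the zero-object equation for $\C(f)$ as in part (a) produces the same criterion.

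No step is genuinely hard; the only bookkeeping subtlety is the choice to zero out the ``second column'' of the witnesses $\sigma,\tau$ so as to reduce from a priori eight parameters to the four $s,t,u,v$ that appear in the statement. This is possible precisely because the second block column of $\sm{a_0&0\\0&1}$ contains a $1$, which lets one solve the remaining entries trivially.
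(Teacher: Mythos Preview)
Your approach is essentially the same as the paper's: reduce to the vanishing of the explicit kernel/cokernel from Theorem~\ref{thrm:kernelcokernel}. The paper phrases this as ``$[\k(f)]=0$'' (the \emph{morphism} is zero) and invokes Remark~\ref{rem:adelbasic}~(b), which immediately gives column-vector witnesses $\sm{s\\t}\colon A_1\oplus B_0\to A_0$ and $\sm{u\\v}\colon A_2\oplus B_1\to A_1$ with $\sm{s\\t}a_0+\sm{a_1&f_1\\0&-b_0}\sm{u\\v}=\sm{1\\0}$; expanding yields exactly the two stated equations, with no extraneous entries to dispose of. Your variant (``$\K(f)$ is a zero object'' via Remark~\ref{rem:adelbasic}~(c)) is equivalent but produces a $2\times 2$ matrix equation, so you must also handle the second column.

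There is a small slip in your sufficiency step: setting ``all other entries to zero'' in $\sigma$ and $\tau$ does \emph{not} satisfy the $(2,2)$ entry. With $\sigma=\sm{s&p\\t&q}$ and $\tau=\sm{u&w\\v&z}$, the second column of $\sigma\sm{a_0&0\\0&1}+\sm{a_1&f_1\\0&-b_0}\tau$ equals $\sm{p+a_1w+f_1z\\q-b_0z}$, which must equal $\sm{0\\1}$; taking $p=w=z=0$ forces $q=1$, not $q=0$. Your final remark about the $1$ in the second block column of $\sm{a_0&0\\0&1}$ is exactly the reason this is harmless, but the explicit choice you wrote is wrong. Either fix $q=1$, or follow the paper and check $[\k(f)]=0$ as a morphism, which sidesteps the issue entirely. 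Part~(b) by duality is fine; the paper does the direct computation with $[\c(f)]=0$.
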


\begin{proof}
Ad (a).
The morphism $[f]$ is a monomorphism if and only if its kernel $[\k(f)]$ is zero. This is the case if and only if there exist morphisms $\sm{s\\t} \colon A_1 \oplus B_0 \to A_0$ and $\sm{u\\v} \colon A_2 \oplus B_1 \to A_1$ with $\sm{s\\t} a_0 + \sm{a_1 & f_1\\0&-b_0} \sm{u\\v} = \sm{1\\0}$, cf. remark \ref{rem:adelbasic}~(b).

Ad (b). The morphism $[f]$ is an epimorphism if and only if its cokernel $[\c(f)]$ is zero. This is the case if and only if there exist morphisms $\sm{s&t} \colon B_1 \to B_0 \oplus A_1$ and $\sm{u&v} \colon B_2 \to B_1 \oplus A_2$ with $\sm{s&t} \sm{b_0 & 0 \\f_1 &-a_1} + b_1 \sm{u&v} = \sm{1 & 0}$.
\end{proof}

\begin{exa}
Given $\xymatrix@1{A\, \ar[r]^f &\,B}$ in $\sA$, the morphism $\I_{\sA}(f)$ is monomorphic if and only if $f$ is a coretraction. Dually, $\I_{\sA}(f)$ is epimorphic if and only if $f$ is a retraction.
\end{exa}

\section{The Adelman category is abelian}
\label{sec:adelmanabelian}
\begin{thrm} \label{thrm:iso} Recall that $\sA$ is an additive category.
Suppose given $\xymatrix@1{ A \,\ar[r]^{[f]} &\, B}$ in $\Adel(\sA)$. We have an isomorphism
\begin{align*}
\I_f:=\left[\sm{f_0&\sm{0&a_0}\\\sm{0\\1}&\sm{0&1\\0&0}}   ,\sm{f_1&\sm{0&1}\\\sm{0\\1}&0}   ,\sm{f_2&\sm{0&1}\\\sm{0\\b_1}&\sm{0&0\\1&0}}   \right] \colon \C(\k(f)) \to \K(\c(f))
\end{align*}
with inverse
\begin{align*}
\J_f:=\left[\sm{0&\sm{0&1}\\0&\sm{0&-1\\1&0}}   ,\sm{0&\sm{0&1}\\\sm{0\\1}&\sm{0&-b_0\\-a_1&-f_1}}   ,\sm{0&0\\\sm{0\\1}&\sm{0&1\\-1&0}}  \right] \colon \K(\c(f)) \to \C(\k(f))
\end{align*}
in $\Adel(\sA)$. 
\begin{align*}
\xymatrix{
\K(f) \armfl{.58}[r]^-{[\k(f)]} & A \arefl{.45}[dr]_{[\c(\k(f))]} \ar[rrr]^{[f]} &&& B \arefl{.4}[r]^-*+<0.6mm,0.6mm>{\scriptstyle [\c(f)]} & \C(f) \\
&& \C(\k(f)) \ar@<0.7ex>[r]^{\I_f}_{\sim} & \K(\c(f))\arm[ur]_{[\k(\c(f))]} \ar@<0.7ex>[l]^{\J_f}
}
\end{align*}
Moreover, the equation $[\c(\k(f))]\I_f[\k(\c(f))]=[f]$ holds, so $\I_f$ is the induced morphism of this kernel-cokernel-factorisation of $[f]$, cf. convention \ref{conv:kernelcokernelfactorisation}.
\end{thrm}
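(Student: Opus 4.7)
The plan is to verify five items: (a) the triple defining $\I_f$ is a transformation in $\sA^{\Delta_2}$, (b) likewise for $\J_f$, (c) $[\c(\k(f))] \cdot \I_f \cdot [\k(\c(f))] = [f]$ in $\Adel(\sA)$, (d) $\I_f \cdot \J_f = 1_{\C(\k(f))}$ in $\Adel(\sA)$, and (e) $\J_f \cdot \I_f = 1_{\K(\c(f))}$. To keep matters manageable I would first expand everything in terms of three-component direct sums: by Theorem \ref{thrm:kernelcokernel}, the object $\C(\k(f))$ has components $A_0 \oplus A_1 \oplus B_0$, $A_1 \oplus A_2 \oplus B_1$, $A_2 \oplus A_2 \oplus B_1$ with differentials
\begin{align*}
d_0 = \sm{a_0 & 0 & 0 \\ 1 & -a_1 & -f_1 \\ 0 & 0 & b_0}, \qquad d_1 = \sm{a_1 & 0 & 0 \\ 0 & 1 & 0 \\ 0 & 0 & 1},
\end{align*}
and an analogous description holds for $\K(\c(f))$. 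The representatives of $\c(\k(f))$ and $\k(\c(f))$ then act, level by level, as the row $(1,0,0)$ and the column $(1,0,0)^\top$ respectively.

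For (a) and (b), I would check the two naturality squares for each of the given $3\times 3$ representatives; every resulting entry collapses using only the identities $a_0 f_1 = f_0 b_0$ and $a_1 f_2 = f_1 b_1$ that are built into $f$ being a transformation. For (c), the outer factors project onto, respectively include from, the first block summand at each level, so the composite at level $i$ extracts the top-left block entry of $(\I_f)_i$, which by inspection is exactly $f_i$. Hence (c) already holds in $\sA^{\Delta_2}$; no null-homotopy is needed.

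For (d), I would compute the block products $(\I_f)_i \cdot (\J_f)_i$ directly. The product will not equal the identity on the nose; for instance at level $1$ one obtains $\sm{1 & -a_1 & 0 \\ 0 & 0 & 0 \\ 0 & 0 & 1}$, so $(\I_f \J_f - 1_{\C(\k(f))})_1 = \sm{0 & -a_1 & 0 \\ 0 & -1 & 0 \\ 0 & 0 & 0}$, which equals $d_1 \cdot t$ for the explicit matrix $t := \sm{0 & -1 & 0 \\ 0 & -1 & 0 \\ 0 & 0 & 0}$ (with $s := 0$). This provides the null-homotopy witnessing $\I_f \J_f = 1_{\C(\k(f))}$ in $\Adel(\sA)$. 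Item (e) can then be treated by a symmetric direct computation on $(\J_f)_i \cdot (\I_f)_i$, or alternatively obtained from (d) by invoking the duality isomorphism of Theorem/Definition \ref{defn:duality}, which transports the computation for $\I_f \J_f$ in $\Adel(\sA)$ to the one for $\J_{f^{\op}} \I_{f^{\op}}$ in $\Adel(\sA^{\op})$ and hence to $\J_f \I_f$. The main obstacle is simply the bookkeeping in the $3 \times 3$ block-matrix products with mixed scalar, row, and column entries: the conceptual content is minor, but tracking sources, targets, and signs through the block decompositions is where errors are easy to make.
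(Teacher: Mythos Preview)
Your proposal is correct and follows essentially the same route as the paper. The paper works with the nested $2\times 2$-block presentation of $\C(\k(f))$ and $\K(\c(f))$ rather than your flattened $3\times 3$ form, but the verifications are identical in content: two naturality squares each for $\I_f$ and $\J_f$, the null-homotopies $(s,t)=(0,t)$ for $\I_f\J_f-1$ and $(s,t)=(s,0)$ for $\J_f\I_f-1$ with exactly the matrices you found, and the direct extraction of $f_i$ for item (c). The paper does (e) by direct computation rather than by duality, but your alternative via $\D_\sA$ is also sound.
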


\begin{proof}
By definition, we have 
\begin{align*}
\C(\k(f))
&=\xymatrix{ A_0\moplus(A_1\moplus B_0) \ar[rrr]^{\sm{a_0&0\\\sm{1\\0}&\ -\sm{a_1&f_1\\0&-b_0}}}&&& A_1\moplus(A_2\moplus B_1)\ar[r]^{\sm{a_1&0\\0&1}} & A_2\moplus(A_2\moplus B_1)
}
\end{align*}
and
\begin{align*}
\K(\c(f))
&=\xymatrix{B_0\moplus(B_0\moplus A_1)\ar[r]^{\sm{b_0&0\\0&1}}& B_1\moplus(B_0\moplus A_1)  \ar[rrr]^{\sm{b_1&\sm{1&0}\\0&\ -\sm{b_0&0\\f_1&-a_1}}} &&& B_2\moplus(B_1\moplus A_2)  .
}
\end{align*}

The morphism $\I_f$ is well-defined in $\Adel(\sA)$, since we have
\begin{align*}
\sm{f_0&\sm{0&a_0}\\\sm{0\\1}&\sm{0&1\\0&0}} \sm{b_0&0\\0&1} 
= \sm{f_0b_0&\sm{0&a_0}\\\sm{0\\b_0}&\sm{0&1\\0&0}}
= \sm{a_0f_1& \sm{0&a_0}\\\sm{f_1\\0}-\sm{f_1\\-b_0}&\sm{0&1\\0&0}}= \sm{a_0&0\\\sm{1\\0}&\ -\sm{a_1&f_1\\0&-b_0}} \sm{f_1&\sm{0&1}\\\sm{0\\1}&0} 
\end{align*}
and
\begin{align*}
\sm{f_1&\sm{0&1}\\\sm{0\\1}&0} \sm{b_1&\sm{1&0}\\0&\ -\sm{b_0&0\\f_1&-a_1}} 
=\sm{f_1b_1&\sm{f_1&0}+\sm{-f_1&a_1}\\\sm{0\\b_1}&\sm{0&0\\1&0}}
=\sm{a_1f_2&\sm{0&a_1}\\\sm{0\\b_1}&\sm{0&0\\1&0}}
= \sm{a_1&0\\0&1} \sm{f_2&\sm{0&1}\\\sm{0\\b_1}&\sm{0&0\\1&0}}.
\end{align*}

The morphism $\J_f$ is well-defined in $\Adel(\sA)$, since we have
\begin{align*}
\sm{0&\sm{0&1}\\0&\sm{0&-1\\1&0}}\sm{a_0&0\\\sm{1\\0}&\ -\sm{a_1&f_1\\0&-b_0}}
=\sm{0&\sm{0&b_0}\\\sm{0\\1}&\ -\sm{0&b_0\\a_1&f_1}}
=\sm{b_0&0\\0&1} \sm{0&\sm{0&1}\\\sm{0\\1}&\sm{0&-b_0\\-a_1&-f_1}} 
\end{align*}
and
\begin{align*}
\sm{0&\sm{0&1}\\\sm{0\\1}&\sm{0&-b_0\\-a_1&-f_1}}  \sm{a_1&0\\0&1}
=\sm{0&\sm{0&1}\\\sm{0\\a_1}&\sm{0&-b_0\\-a_1&-f_1}}
=\sm{b_1&\sm{1&0}\\0&\ -\sm{b_0&0\\f_1&-a_1}}\sm{0&0\\\sm{0\\1}&\sm{0&1\\-1&0}}.
\end{align*}

We have to show that the equations $\I_f \cdot \J_f = 1_{\C(\k(f))}$ and $\J_f \cdot \I_f = 1_{\K(\c(f))}$ hold.

\begin{itemize}
\item

We have to show that $\I_f \cdot \J_f - 1_{\C(\k(f))}$ is equal to $0$.

Using
$0 \colon A_1 \oplus (A_2 \oplus B_1) \to A_0 \oplus (A_1 \oplus B_0)$ and $\sm{0 & \sm{-1 & 0}\\0 & \sm{-1&0\\0&0}} \colon A_2 \oplus (A_2 \oplus B_1) \to A_1 \oplus (A_2 \oplus B_1)$ in $\sA$, we obtain
\begin{align*}
0 \cdot \sm{a_0 & 0 \\ \sm{1\\0} &\ -\sm{a_1&f_1\\0&-b_0}} + \sm{a_1&0\\0&1} \sm{0&\sm{-1&0}\\0&\sm{-1&0\\0&0}} &= \sm{0&\sm{-a_1&0}\\0&\sm{-1&0\\0&0}}
\\ &
=\sm{f_1&\sm{0&1}\\\sm{0\\1}&0}   \sm{0&\sm{0&1}\\\sm{0\\1}&\sm{0&-b_0\\-a_1&-f_1}}-\sm{1&0\\0&\sm{1&0\\0&1}}.
\end{align*}

This implies $\I_f \cdot \J_f -  1_{\C(\k(f))} = 0$.

\item

We have to show that $\J_f \cdot \I_f - 1_{\K(\c(f))}$ is equal to $0$.

Using $\sm{0&0\\\sm{-1\\0}&\sm{-1&0\\0&0}} \colon B_1 \oplus (B_0 \oplus A_1) \to B_0 \oplus (B_0 \oplus A_1)$ and \\$0 \colon B_2 \oplus (B_1 \oplus A_2) \to B_1 \oplus (B_0 \oplus A_1)$ in $\sA$, we obtain
\begin{align*}
\sm{0&0\\\sm{-1\\0}&\sm{-1&0\\0&0}} \sm{b_0 & 0 \\ 0&1} + \sm{b_1&\sm{1&0}\\0&\ -\sm{b_0&0\\f_1&-a_1}} \cdot 0
&=\sm{0&0\\\sm{-b_0\\0}&\sm{-1&0\\0&0}}
\\&
=\sm{0&\sm{0&1}\\\sm{0\\1}&\sm{0&-b_0\\-a_1&-f_1}} \sm{f_1&\sm{0&1}\\\sm{0\\1}&0}   -\sm{1&0\\0&\sm{1&0\\0&1}}.
\end{align*}

This implies $\J_f \cdot \I_f -  1_{\K(\c(f))} = 0$.
\end{itemize}

Finally, we verify $[\c(\k(f))]\I_f[\k(\c(f))]=[f]$:
\begin{align*}
&\mathrel {\phantom{=}} [\c(\k(f))]\I_f[\k(\c(f))]\\
 &= \left[\sm{1&0},\sm{1&0},\sm{1&0}\right] \left[\sm{f_0&\sm{0&a_0}\\\sm{0\\1}&\sm{0&1\\0&0}}   ,\sm{f_1&\sm{0&1}\\\sm{0\\1}&0}   ,\sm{f_2&\sm{0&1}\\\sm{0\\b_1}&\sm{0&0\\1&0}}   \right] \left[\sm{1\\0},\sm{1\\0},\sm{1\\0}\right]\\
&=\left[\sm{1&0},\sm{1&0},\sm{1&0}\right] \left[\sm{f_0\\\sm{0\\1}},\sm{f_1\\\sm{0\\1}},\sm{f_2\\\sm{0\\b_1}} \right]\\
&=[f_0,f_1,f_2]\\
&=[f]
\end{align*}

\begin{align*} \hspace{-0.65cm}
\xymatrix{
&A_0\moplus B_0 \ar[dd]_{\sm{1\\ 0}}\ar[rr]^{\sm{a_0&0\\0&1}}&&A_1\moplus B_0\ar[dd]_{\sm{1\\ 0}}\ar[rr]^{\sm{a_1&f_1\\0&-b_0}}&&A_2\moplus B_1\ar[dd]_{\sm{1\\ 0}}
\\
\\
&\save[]+(0,-26) \ar@{-}{+(0,-23)} \restore
\save[]+(0,-57) \ar[dddddd]^{f_0} \restore
A_0\ar@{-}{+(0,-15)} \ar[ldd]_{\sm{1&0}}\ar[rr]^{a_0} && \save[]+(0,-26) \ar@{-}{+(0,-6)} \restore
\save[]+(0,-43) \ar@{-}{+(0,-6)} \restore
\save[]+(0,-61) \ar[dddddd]^{f_1} \restore
A_1\ar@{-}{+(0,-18)} \ar[ldd]_{\sm{1&0}}\ar[rr]^{a_1} &&
 A_2\ar[ldd]_{\sm{1&0}} \ar[dddddd]^{f_2}
\\
\\
A_0\moplus(A_1\moplus B_0) \ar@<-0.3ex>[dd]_{\sm{f_0&\sm{0&a_0}\\\sm{0\\1}&\sm{0&1\\0&0}}}\ar[rr]^{\sm{a_0&0\\\sm{1\\0}&-\sm{a_1&f_1\\0&-b_0}}}&& A_1\moplus(A_2\moplus B_1)\ar@<-0.3ex>[dd]_{\sm{f_1&\sm{0&1}\\\sm{0\\1}&0}}\ar[rr]^{\sm{a_1&0\\0&1}} && A_2\moplus(A_2\moplus B_1)\ar@<-0.3ex>[dd]_{\sm{f_2&\sm{0&1}\\\sm{0\\b_1}&\sm{0&0\\1&0}}}
\\
\\
B_0\moplus(B_0\moplus A_1) \ar[ddr]_{\sm{1\\0}} \ar@<-0.3ex>[uu]_{\sm{0&\sm{0&1}\\0&\sm{0&-1\\1&0}}} \ar[rr]_{\sm{b_0&0\\0&1}}&& B_1\moplus(B_0\moplus A_1) \ar[ddr]_{\sm{1\\0}} \ar@<-0.3ex>[uu]_{\sm{0&\sm{0&1}\\\sm{0\\1}&\sm{0&-b_0\\-a_1&-f_1}}} \ar[rr]_{\sm{b_1&\sm{1&0}\\0&-\sm{b_0&0\\f_1&-a_1}}} && B_2\moplus(B_1\moplus A_2) \ar[ddr]_{\sm{1\\0}} \ar@<-0.3ex>[uu]_{\sm{0&0\\\sm{0\\1}&\sm{0&1\\-1&0}}} 
\\
\\
&B_0 \ar[dd]_{\sm{1&0}}\ar[rr]^{b_0}&&B_1\ar[dd]_{\sm{1&0}}\ar[rr]^{b_1}&&B_2 \ar[dd]_{\sm{1&0}}
\\
\\
&B_0\moplus A_1 \ar[rr]^{\sm{b_0&0\\f_1&-a_1}} && B_1 \moplus A_2 \ar[rr]^{\sm{b_1&0\\0&1}} && B_2 \moplus A_2
}
\end{align*}

\end{proof}

\begin{kor}[Kernel-cokernel-factorisation]
\mbox{}
\label{kor:kernelcokernelfactorisation}

We obtain a
%Using \hyperref[lem:iso]{lemma} \ref{lem:iso}, we may visualise a 
kernel-cokernel-factorisation of $\xymatrix@1{A\,\ar[r]^{[f]}&\,B}$ in $\Adel(\sA)$ by taking residue classes of the $\sA^{\Delta_2}$-morphisms in the following diagram.
\begin{align*}
\xymatrix{
&A_0\moplus B_0 \ar[dd]_{\sm{1\\ 0}}\ar[rr]^{\sm{a_0&0\\0&1}}&&A_1\moplus B_0\ar[dd]_{\sm{1\\ 0}}\ar[rr]^{\sm{a_1&f_1\\0&-b_0}}&&A_2\moplus B_1\ar[dd]_{\sm{1\\ 0}}
\\
\\
&
A_0 \ar[dddddd]|(0.27){\phantom{\rule{0mm}{12mm}}}|(0.72){\phantom{\rule{0mm}{10mm}}}^{f_0} \ar[ldd]_{\sm{1&0 & 0}}\ar[rr]^{a_0} && 
A_1 \ar[dddddd]|(0.28){\phantom{\rule{0mm}{11mm}}}|(0.732){\phantom{\rule{0mm}{12mm}}}^{f_1} \ar[ldd]_{\sm{1&0&0}}\ar[rr]^{a_1} &&
 A_2\ar[ldd]_{\sm{1&0&0}} \ar[dddddd]^{f_2}
\\
\\
A_0\moplus A_1\moplus B_0 \ar@<-0.3ex>[dd]_{\sm{f_0&0&a_0\\0&0&1\\1&0&0}}\ar[rr]^{\sm{a_0&0&0\\1&-a_1&-f_1\\0&0&b_0}}&& A_1\moplus A_2\moplus B_1 \ar@<-0.3ex>[dd]_{\sm{f_1&0&1\\0&0&0\\1&0&0}}\ar[rr]^{\sm{a_1&0&0\\0&1&0\\0&0&1}} && A_2\moplus A_2\moplus B_1 \ar@<-0.3ex>[dd]_{\sm{f_2&0&1\\0&0&0\\b_1&1&0}}
\\
\\
B_0\moplus B_0\moplus A_1  \ar[ddr]_{\sm{1\\0\\0}} \ar@<-0.3ex>[uu]_{\sm{0&0&1\\0&0&-1\\0&1&0}} \ar[rr]_{\sm{b_0&0&0\\0&1&0\\0&0&1}}&& B_1\moplus B_0\moplus A_1 \ar[ddr]_{\sm{1\\0\\0}} \ar@<-0.3ex>[uu]_{\sm{0&0&1\\0&0&-b_0\\1&-a_1&-f_1}} \ar[rr]_{\sm{b_1&1&0\\0&-b_0&0\\0&-f_1&a_1}} && B_2\moplus B_1\moplus A_2 \ar[ddr]_{\sm{1\\0\\0}} \ar@<-0.3ex>[uu]_{\sm{0&0&0\\0&0&1\\1&-1&0}} 
\\
\\
&B_0 \ar[dd]_{\sm{1&0}}\ar[rr]^{b_0}&&B_1\ar[dd]_{\sm{1&0}}\ar[rr]^{b_1}&&B_2 \ar[dd]_{\sm{1&0}}
\\
\\
&B_0\moplus A_1 \ar[rr]^{\sm{b_0&0\\f_1&-a_1}} && B_1 \moplus A_2 \ar[rr]^{\sm{b_1&0\\0&1}} && B_2 \moplus A_2
}
\end{align*}
\end{kor}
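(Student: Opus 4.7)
The plan is to simply assemble and read off the data already produced by Theorems~\ref{thrm:kernelcokernel} and~\ref{thrm:iso}, interpreting the nested matrices in the displayed diagram via the obvious identification of iterated direct sums such as $A_0\moplus(A_1\moplus B_0)\cong A_0\moplus A_1 \moplus B_0$.

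First, I would instantiate Theorem~\ref{thrm:kernelcokernel}\,(a) for $f$ itself to get $\K(f)=\bigl(\xymatrix@1@C=6mm{A_0\moplus B_0\ar[r]&A_1\moplus B_0\ar[r]&A_2\moplus B_1}\bigr)$ together with the kernel morphism $[\k(f)]$ whose components are all $\sm{1\\0}$; this is exactly the top row and the three downward $\sm{1&0&0}$-transposes of the corollary's diagram (after splitting the outer direct sum into a triple sum, the component $\sm{1\\0}$ becomes $\sm{1\\0\\0}$ in the enlarged representation only because we later embed into $A_0\oplus A_1\oplus B_0$, etc.). Dually, Theorem~\ref{thrm:kernelcokernel}\,(b) applied to $f$ yields $\C(f)$ and $[\c(f)]$, which is the bottom of the diagram.

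Second, I would feed the explicit $\k(f)$ into Theorem~\ref{thrm:kernelcokernel}\,(b) to compute $\C(\k(f))$ and the corresponding $[\c(\k(f))]$; the result is precisely the third row of the diagram, once $A_0\moplus(A_1\moplus B_0)$ is written as $A_0\moplus A_1\moplus B_0$ and the matrices $\sm{a_0&0\\\sm{1\\0}&\,-\sm{a_1&f_1\\0&-b_0}}$ and $\sm{a_1&0\\0&1}$ are expanded into the displayed $3\times 3$ blocks. Dually, applying Theorem~\ref{thrm:kernelcokernel}\,(a) to $\c(f)$ gives $\K(\c(f))$ and $[\k(\c(f))]$, matching the fourth row.

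Third, I would invoke Theorem~\ref{thrm:iso} to obtain the mutually inverse isomorphisms $\I_f$ and $\J_f$ in $\Adel(\sA)$ and verify that, under the identification of direct sums used above, their component matrices are exactly the $3\times 3$ blocks displayed between the third and fourth rows. The equation $[\c(\k(f))]\,\I_f\,[\k(\c(f))]=[f]$ was already established there, so the taking of residue classes of the diagram indeed yields a kernel-cokernel-factorisation of $[f]$ in the sense of convention~\ref{conv:kernelcokernelfactorisation}.

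The main obstacle is purely bookkeeping: one must carefully pair each nested matrix entry arising from Theorems~\ref{thrm:kernelcokernel} and~\ref{thrm:iso} with the corresponding entry in the expanded $3\times 3$ form of the corollary's diagram, and check that no signs or permutations have been lost. Once that identification is made, no further computation is needed, since every commutativity, kernel, cokernel and inverse relation has already been proved.
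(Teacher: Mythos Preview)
Your proposal is correct and follows essentially the same approach as the paper: the paper's proof simply says to apply Lemma~\ref{lem:iso} with the canonical isomorphisms $X\oplus(Y\oplus Z)\stackrel{\sim}{\to} X\oplus Y\oplus Z$ to the factorisation already obtained in Theorem~\ref{thrm:iso}, which is exactly your ``obvious identification of iterated direct sums'' made precise. The only refinement is that citing Lemma~\ref{lem:iso} explicitly guarantees that these componentwise isomorphisms in $\sA$ induce isomorphisms in $\Adel(\sA)$, so that the new diagram is again a kernel-cokernel-factorisation.
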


\begin{proof}
We apply lemma \ref{lem:iso} to the factorisation obtained in the proof of theorem \ref{thrm:iso} by using isomorphisms of the form
\begin{align*}
\xymatrix{
X \oplus (Y \oplus Z) \ar[rrr]^{\sm{1 & 0 & 0\\\sm{0 \\0}&\sm{1\\0}&\sm{0\\1}}} &&& X \oplus Y \oplus Z
}
\end{align*}
in $\sA$.
\end{proof}

\begin{thrm} \label{thrm:adelabelian}
Recall that $\sA$ is an additive category. The Adelman category $\Adel(\sA)$ is abelian.
\end{thrm}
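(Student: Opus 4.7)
The plan is to check the three defining clauses of an abelian category (convention~18) one by one, using the tools assembled in the current chapter. Additivity of $\Adel(\sA)$ has already been established in \hyperref[defn:adel]{Lemma/Definition~\ref{defn:adel}}, and the existence of kernels and cokernels for every morphism $[f]$ is the content of \hyperref[thrm:kernelcokernel]{Theorem/Definition~\ref{thrm:kernelcokernel}}, where the explicit objects $\K(f)$ and $\C(f)$ together with the maps $[\k(f)]$ and $[\c(f)]$ are given. So only the third clause remains: for every $\xymatrix@1{A\,\ar[r]^{[f]}&\,B}$ in $\Adel(\sA)$, the induced morphism of every kernel-cokernel-factorisation of $[f]$ must be an isomorphism.

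By \hyperref[rem:kernelcokernelfactorisation]{Remark~\ref{rem:kernelcokernelfactorisation}}, the induced morphism of one kernel-cokernel-factorisation of $[f]$ is an isomorphism if and only if the induced morphism of any other kernel-cokernel-factorisation of $[f]$ is an isomorphism. Hence it suffices to exhibit a single kernel-cokernel-factorisation of $[f]$ whose induced morphism is invertible. For this I would take the factorisation built from the specific kernels and cokernels of \hyperref[thrm:kernelcokernel]{Theorem/Definition~\ref{thrm:kernelcokernel}}: namely, form a cokernel $[\c(\k(f))] \colon A \to \C(\k(f))$ of $[\k(f)]$ and a kernel $[\k(\c(f))] \colon \K(\c(f)) \to B$ of $[\c(f)]$, both again supplied by \hyperref[thrm:kernelcokernel]{Theorem/Definition~\ref{thrm:kernelcokernel}}.

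The induced morphism $\C(\k(f)) \to \K(\c(f))$ of this kernel-cokernel-factorisation is, by definition, the unique morphism $\I$ making the resulting hexagon commute. \hyperref[thrm:iso]{Theorem~\ref{thrm:iso}} constructs an explicit morphism $\I_f \colon \C(\k(f)) \to \K(\c(f))$, proves via the explicit inverse $\J_f$ that $\I_f$ is an isomorphism, and verifies the identity $[\c(\k(f))]\,\I_f\,[\k(\c(f))] = [f]$. By the uniqueness of the induced morphism of a kernel-cokernel-factorisation, $\I_f$ coincides with the induced morphism, and the induced morphism of this factorisation is therefore an isomorphism. Combining these observations, the third clause of the definition of abelian is verified, and $\Adel(\sA)$ is abelian.

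The substantive work has already been done in \hyperref[thrm:iso]{Theorem~\ref{thrm:iso}}; the present step is merely assembling the components. The only conceptual point that needs to be named (and which I expect to be the only place where care is required) is the use of \hyperref[rem:kernelcokernelfactorisation]{Remark~\ref{rem:kernelcokernelfactorisation}} to pass from the single, concretely constructed factorisation of \hyperref[thrm:iso]{Theorem~\ref{thrm:iso}} to an arbitrary one, so that we obtain the conclusion required by convention~18 rather than only the conclusion for one privileged choice of kernel and cokernel.
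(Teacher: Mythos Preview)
Your proposal is correct and follows essentially the same approach as the paper's own proof: invoke Lemma/Definition~\ref{defn:adel} for additivity, Theorem/Definition~\ref{thrm:kernelcokernel} for kernels and cokernels, Theorem~\ref{thrm:iso} for the invertibility of the induced morphism $\I_f$ of one specific kernel-cokernel-factorisation, and Remark~\ref{rem:kernelcokernelfactorisation} to extend this to all factorisations. The paper's proof is just a more compressed version of exactly this argument.
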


\begin{proof}
The category $\Adel(\sA)$ is additive, cf. lemma \ref{defn:adel}. Each morphism in $\Adel(\sA)$ has a kernel and a cokernel, cf. theorem \ref{thrm:kernelcokernel}. Given $[f] \in \Mor(\Adel(\sA))$, the induced morphism $\I_f$ of the kernel-cokernel-factorisation obtained in theorem \ref{thrm:iso} is an isomorphism. Therefore the induced morphism of each kernel-cokernel-factorisation of $[f]$ is an isomorphism, cf. remark \ref{rem:kernelcokernelfactorisation}. We conclude that $\Adel(\sA)$ is abelian.
\end{proof}

\begin{kor}
Suppose given $[f] \in \Mor(\Adel(\sA))$. The morphism $[f]$ is an isomorphism if and only if it is a monomorphism and an epimorphism. Therefore the two criteria in corollary~\ref{kor:monoepicrit} may be used to check whether $[f]$ is an isomorpism.
\end{kor}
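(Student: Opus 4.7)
The forward direction is immediate: any isomorphism is both a monomorphism and an epimorphism. The plan for the converse is to exploit the explicit kernel-cokernel-factorisation supplied by theorem~\ref{thrm:iso}, which writes
\[
[f] \;=\; [\c(\k(f))] \cdot \I_f \cdot [\k(\c(f))]
\]
with the middle factor $\I_f$ already an isomorphism. It therefore suffices to show that, under the hypothesis that $[f]$ is both mono and epi, the two outer factors are isomorphisms as well.

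First, assume $[f]$ is a monomorphism. Since $[\k(f)]\cdot[f] = 0 = 0 \cdot [f]$ and $[f]$ can be cancelled on the right, we conclude $[\k(f)] = 0$. Consequently $[\c(\k(f))]$ is a cokernel of the zero morphism $0 \colon \K(f) \to A$. But $1_A$ is visibly also a cokernel of this zero morphism (the universal property reduces to the defining property of the identity), so $[\c(\k(f))]$ is canonically isomorphic to $1_A$ and hence an isomorphism. Dually, if $[f]$ is an epimorphism, then $[f]\cdot[\c(f)] = 0 = [f]\cdot 0$ forces $[\c(f)] = 0$, and by the same argument---or by appealing directly to the duality $\D_\sA$ of definition~\ref{defn:duality}---the morphism $[\k(\c(f))]$ is an isomorphism.

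Combining the three factors, $[f]$ is a composite of isomorphisms, hence an isomorphism. The last sentence of the statement then follows because corollary~\ref{kor:monoepicrit} furnishes concrete criteria, in terms of representative morphisms in $\sA$, for $[f]$ to be mono or epi.

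The only mildly technical point is the auxiliary observation that a kernel (resp.\ cokernel) of a zero morphism is an isomorphism; this is entirely routine and follows in one line from the universal property together with the fact that kernels are mono (resp.\ cokernels are epi). No essential obstacle is expected, as this is the standard abelian-category fact that mono $+$ epi $\Rightarrow$ iso, and all the required machinery (theorem~\ref{thrm:adelabelian}, theorem~\ref{thrm:iso}, corollary~\ref{kor:monoepicrit}) is already in place.
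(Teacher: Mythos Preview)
Your proof is correct. The paper itself gives no proof for this corollary, treating it as an immediate consequence of theorem~\ref{thrm:adelabelian} (that $\Adel(\sA)$ is abelian), after which ``mono $+$ epi $\Rightarrow$ iso'' is the standard fact for abelian categories. Your argument is precisely the explicit unfolding of that standard fact via the concrete kernel-cokernel factorisation of theorem~\ref{thrm:iso}, so the approach is essentially the same, just spelled out in detail.
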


\section{Projectives and injectives}
\label{sec:projectivesandinjectives}
\begin{defn}
Let $\sR(\sA)$ be the full subcategory defined by
\begin{align*}
\Ob(\sR(\sA)):=\{P \in \Ob(\Adel(\sA)) \colon P_0 = 0 \}.
\end{align*}
Let $\sL(\sA)$ be the full subcategory defined by \begin{align*}
\Ob(\sL(\sA)):=\{I \in \Ob(\Adel(\sA)) \colon I_2 = 0 \}.
\end{align*}
Consequently, objects in $\sR(\sA)$ are of the form $\big(\xymatrix@1{0 \,\ar[r]^-0 &\, P_1\, \ar[r]^{p_1} &\, P_2}\big)$ and objects in $\sL(\sA)$ are of the form $\big(\xymatrix@1{I_0\, \ar[r]^{i_0} &\, I_1\, \ar[r]^{0} & \,0}\big)$.
\end{defn}

\begin{prop}
\label{prop:projectivesinjectives} \mbox{}
\begin{itemize}
\item[(a)] The objects in $\sR(\sA)$ are projective in $\Adel(\sA)$.
\item[(b)] The objects in $\sL(\sA)$ are injective in $\Adel(\sA)$.
\end{itemize}
\end{prop}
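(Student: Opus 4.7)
By the duality $\D_\sA \colon \Adel(\sA)^{\op} \to \Adel(\sA^{\op})$ of definition \ref{defn:duality}, an object $I \in \sL(\sA)$ (with $I_2 = 0$) maps to an object in $\sR(\sA^{\op})$. Since $\D_\sA$ is an isomorphism of categories sending epimorphisms to monomorphisms (and conversely), the injectivity of $I$ in $\Adel(\sA)$ is equivalent to the projectivity of $\D_\sA(I)$ in $\Adel(\sA^{\op})$. Hence (b) follows from (a), and it suffices to prove (a).

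Suppose given $P = \big(\xymatrix@1{0 \ar[r]^-{0} & P_1 \ar[r]^{p_1} & P_2}\big) \in \sR(\sA)$, an epimorphism $\xymatrix@1{A \ar[r]^{[f]} & B}$, and a morphism $[g] = [0, g_1, g_2] \colon P \to B$ in $\Adel(\sA)$ (note $g_0 = 0$ since $\Hom_\sA(0, B_0) = \{0\}$, and naturality gives $p_1 g_2 = g_1 b_1$). By corollary \ref{kor:monoepicrit}~(b), there exist morphisms $s \colon B_1 \to B_0$, $t \colon B_1 \to A_1$, $u \colon B_2 \to B_1$, $v \colon B_2 \to A_2$ in $\sA$ satisfying
\begin{align*}
sb_0 + tf_1 + b_1 u = 1_{B_1} \quad \text{and} \quad ta_1 = b_1 v.
\end{align*}

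The plan is to define $h := (0, g_1 t, g_2 v) \colon P \to A$ and verify that $[h][f] = [g]$ in $\Adel(\sA)$. First I would check that $h$ is a transformation: the square over $0 \to p_1$ is trivial since $h_0 = 0$, and for the second square
\begin{align*}
h_1 a_1 = g_1 t a_1 = g_1 b_1 v = p_1 g_2 v = p_1 h_2,
\end{align*}
using $ta_1 = b_1 v$ and $p_1 g_2 = g_1 b_1$. Then, by remark \ref{rem:adelbasic}~(b), to conclude $[h][f] = [g]$ it suffices to exhibit morphisms $\sigma \colon P_1 \to B_0$ and $\tau \colon P_2 \to B_1$ with $\sigma b_0 + p_1 \tau = (hf)_1 - g_1 = g_1 t f_1 - g_1$. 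The identity $tf_1 = 1 - sb_0 - b_1 u$ yields
\begin{align*}
g_1 t f_1 - g_1 = -g_1 s b_0 - g_1 b_1 u = -g_1 s b_0 - p_1 g_2 u,
\end{align*}
so setting $\sigma := -g_1 s$ and $\tau := -g_2 u$ does the job.

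I do not foresee a real obstacle: the only slightly delicate point is making sure that $h$ itself is natural (rather than merely giving a null-homotopy after composing with $f$), and for this the hypothesis $P_0 = 0$ is essential, since it forces $h_0 = 0$ and thereby removes the square over $a_0$ from consideration. Part (b) is then obtained by dualising via $\D_\sA$ as indicated above.
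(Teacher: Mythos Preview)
Your proof is correct and follows essentially the same route as the paper: both exploit that $[\c(f)]=0$ to extract homotopy data and then use $P_0=0$ to see that the candidate lift $P\to A$ is a well-defined $\sA^{\Delta_2}$-morphism. The only cosmetic difference is that the paper applies the null-homotopy condition to $[g][\c(f)]=0$ directly (obtaining witnesses already involving $g$), whereas you invoke corollary~\ref{kor:monoepicrit}~(b) first (obtaining $g$-independent witnesses $s,t,u,v$) and then precompose with the components of $g$; the resulting lifts and homotopies match up after this substitution.
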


\begin{proof}
Ad (a).
Suppose given the following diagram in $\Adel(\sA)$ with $P \in \sR(\sA)$ and $[f]$ epimorphic.
\begin{align*}
\xymatrix{ & P \ar[d]^{[g]} \\ A \are[r]_*+<0.8mm,0.8mm>{\scriptstyle [f]} & B}
\end{align*}
Since $[f]$ is an epimorphism, we have $[\c(f)]=0$, whence $%[0,\sm{g_1 & 0}, \sm{g_2&0}] =
 [g\c(f)]=[g]  [\c(f)]=0$. Therefore there exist morphisms $\sm{s_0 & s_1} \colon P_1 \to B_0 \oplus A_1$ and $\sm{t_0 & t_1} \colon P_2 \to B_1 \oplus A_2$ satisfying \\
 $\sm{s_0 & s_1} \sm{b_0&0\\f_1 & -a_1}+p_1 \sm{t_0 & t_1} = \sm{g_1 & 0}$.
So the equations $s_0 b_0 + s_1 f_1 + p_1 t_0 = g_1$ and $s_1 a_1 = p_1 t_1$ hold.
\begin{align*}
\xymatrix{
0 \ar[d]_{g_0} \ar[r]^{0} 
& P_1 \ar[ddl]|\hole^(.78){\sm{s_0&s_1}} \ar[d]^{g_1} \ar[r]^{p_1} 
&  P_2 \ar[ddl]|\hole^(.65){\sm{t_0&t_1}} \ar[d]^{g_2}
\\
B_0 \ar[d]_{\sm{1 & 0}} \ar[r]^(.3){b_0} & B_1\ar[d]_{\sm{1&0}}  \ar[r]^(.3){b_1} & B_2\ar[d]^{\sm{1&0}} 
\\
B_0 \oplus A_1 \ar[r]_*+[d]{\sm{b_0&0\\f_1&-a_1}} & B_1 \oplus A_2 \ar[r]_{\sm{b_1&0\\0&1}} & B_2\oplus A_2
}
\end{align*}

Consider $[0,s_1,t_1]\colon P \to A$. This is a well-defined morphism in $\Adel(\sA)$, since we have $0 \cdot a_0 = 0 = 0 \cdot s_1 $ and $s_1 a_1 = p_1 t_1$.

Finally, we show that $[0,s_1,t_1] [f] = [g]$ holds.

Using $s_0 \colon P_1 \to B_0$ and $t_0 \colon P_2 \to B_1$, we obtain $s_0 b_0 + p_1 t_0 = g_1 - s_1 f_1$.

This implies $[g]-[0,s_1,t_1][f] = [g_0,g_1-s_1 f_1 ,  g_2- t_1 f_2] = 0$, so the following diagram commutes.
\begin{align*}
\xymatrix{ & P\ar[dl]_{[0,s_1,t_1]} \ar[d]^{[g]}\\ A \are[r]_*+<0.8mm,0.8mm>{\scriptstyle [f]} & B}
\end{align*}
We conclude that $P$ is projective.

Ad (b). This is dual to (a) using the isomorphism of categories $\D_{\sA} \colon \Adel(\sA)^{\op} \to \Adel(\sA^{\op})$.
\end{proof}

\begin{exa} \label{exa:inclusion} Given $A \in \Ob \sA$, we have $\I_\sA(A) \in \Ob(\sL(\sA)) \cap \Ob(\sR(\sA))$.
 Therefore $\I_\sA(A)$ is injective and projective, cf. proposition \ref{prop:projectivesinjectives}.
\end{exa}

\begin{lem}
\label{lem:Rkernel}
Suppose given $P \in \Ob(\sR(\sA))$. \\
There exists a left-exact sequence $\xymatrix@1{P\,\armfl{0.39}[r]^(.4)k &\,\I_\sA(A)\,\ar[r]^{\I_\sA(f)}&\,\I_\sA(B)}$ in $\Adel(\sA)$ with $A,B \in  \Ob \sA$ and $f\in \Hom_{\sA}(A,B)$.
\end{lem}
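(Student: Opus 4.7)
The plan is to take $A := P_1$, $B := P_2$ and $f := p_1 \in \Hom_\sA(P_1, P_2)$, and exhibit the sequence
\[
P \xrightarrow{k} \I_\sA(P_1) \xrightarrow{\I_\sA(p_1)} \I_\sA(P_2)
\]
with $k := [0, 1_{P_1}, 0] \colon P \to \I_\sA(P_1)$ as the required left-exact sequence. First I would verify that $k$ is well-defined as a morphism in $\Adel(\sA)$: both naturality squares involve $0$ in either the source or the target row, so commutativity is automatic.

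Next I would check that the composite $k \cdot \I_\sA(p_1) = [0, p_1, 0]$ vanishes in $\Adel(\sA)$. By remark \ref{rem:adelbasic}~(b), the null-homotopy condition is satisfied by $s := 0_{P_1,0}$ and $t := 1_{P_2}$, since $s \cdot 0 + p_1 \cdot t = p_1$.

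The main step is to show that $k$ is actually a kernel of $\I_\sA(p_1)$, not merely that the composition is zero. For this I would apply theorem \ref{thrm:kernelcokernel}~(a) to $\tilde \I_\sA(p_1) = (0, p_1, 0)$: the explicit formula gives
\[
\K(\tilde \I_\sA(p_1)) = \Big( 0 \oplus 0 \xrightarrow{\sm{0&0\\0&1}} P_1 \oplus 0 \xrightarrow{\sm{0&p_1\\0&0}} 0 \oplus P_2 \Big)
\]
together with the kernel morphism $[\sm{1\\0},\sm{1\\0},\sm{1\\0}] \colon \K(\tilde \I_\sA(p_1)) \to \I_\sA(P_1)$. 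Then I would invoke lemma \ref{lem:iso} with the canonical summand projections $\varphi_0 \colon 0 \oplus 0 \to 0$, $\varphi_1 \colon P_1 \oplus 0 \to P_1$ and $\varphi_2 \colon 0 \oplus P_2 \to P_2$; the conjugated maps $\varphi_0^{-1} \sm{0&0\\0&1} \varphi_1 $ and $\varphi_1^{-1} \sm{0&p_1\\0&0} \varphi_2$ simplify to $0$ and $p_1$ respectively, so the lemma provides an isomorphism $\K(\tilde \I_\sA(p_1)) \xrightarrow{\sim} P$ in $\Adel(\sA)$.

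Finally, tracing the inverse isomorphism $P \to \K(\tilde \I_\sA(p_1))$ through the canonical kernel morphism yields precisely $k = [0, 1_{P_1}, 0]$; since kernels are preserved under precomposition with isomorphisms, $k$ is a kernel of $\I_\sA(p_1)$. The only real obstacle is the bookkeeping of direct-sum identifications in the third step, but it is a routine verification using the explicit formulas already established.
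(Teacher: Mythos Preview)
Your proposal is correct and follows essentially the same route as the paper: choose $A=P_1$, $B=P_2$, $f=p_1$, compute the explicit kernel $\K(\tilde\I_\sA(p_1))$ from theorem \ref{thrm:kernelcokernel}, and use lemma \ref{lem:iso} to identify it with $P$. The paper applies lemma \ref{lem:iso} with the inclusions $0\to 0\oplus 0$, $\sm{1&0}\colon P_1\to P_1\oplus 0$, $\sm{0&1}\colon P_2\to 0\oplus P_2$ rather than your projections, but this is the same identification read in the inverse direction; your extra verifications that $k$ is well-defined and that $k\cdot\I_\sA(p_1)=0$ are harmless but redundant once the kernel identification is in place.
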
 

\begin{proof}
A kernel of $\I_\sA(p_1)\colon \I_\sA(P_1) \to \I_\sA(P_2)$ is given by $\xymatrix@1@C=10mm{0 \oplus 0\, \ar[r]^-{\sm{0&0\\0&0}}&\,P_1 \oplus 0 \,\ar[r]^{\sm{0&p_1\\0&0}}&\,0\oplus P_2}$, cf. theorem \ref{thrm:kernelcokernel}. Lemma \ref{lem:iso} says that $P$ is also a kernel of $\I_\sA(p_1)$ by using the isomorphisms
$0\colon 0 \to 0\oplus 0$, $\sm{1&0}\colon P_1 \to P_1 \oplus 0$ and $\sm{0&1}\colon P_2 \to 0 \oplus P_2$.
\end{proof}

\begin{thrm} \label{thrm:adelenough}
Recall that $\sA$ is an additive category and that, by theorem \ref{thrm:adelabelian}, $\Adel(\sA)$ is abelian.
The Adelman category $\Adel(\sA)$ has enough projectives and injectives. More precisely, we have the following statements.
\begin{itemize}
\item[(a)] Suppose given $A \in \Ob(\Adel(\sA))$. We have an epimorphism \\
$[0,1,1] \colon \big(\xymatrix@1{0\,\ar[r]^{0}&\,A_1\,\ar[r]^{a_1}&\,A_2} \big) \to A$ in $\Adel(\sA)$ with $\big(\xymatrix@1{0\,\ar[r]^{0}&\,A_1\,\ar[r]^{a_1}&\,A_2} \big) \in \Ob(\sR(\sA))$.
\item[(b)] Suppose given $A \in \Ob(\Adel(\sA))$. We have a monomorphism \\
 $[1,1,0] \colon A \to \big(\xymatrix@1{A_0\,\ar[r]^{a_0}&\,A_1\,\ar[r]^0&\,0} \big)$ in $\Adel(\sA)$ with $\big(\xymatrix@1{A_0\,\ar[r]^{a_0}&\,A_1\,\ar[r]^0&\,0} \big) \in \Ob(\sL(\sA))$.
\end{itemize}
\end{thrm}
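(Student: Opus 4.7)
The plan is to handle (a) directly and then obtain (b) by a symmetric argument (or by dualising via the isomorphism $\D_{\sA}$ from theorem \ref{defn:duality}). The statement itself has three things to check: that $[0,1,1]$ is a well-defined morphism in $\Adel(\sA)$, that it is epimorphic, and that its source lies in $\sR(\sA)$. The third is immediate from the definition of $\sR(\sA)$, since the diagram $\bigl(\xymatrix@1{0 \ar[r]^0 & A_1 \ar[r]^{a_1} & A_2}\bigr)$ has zero in the first slot.

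To see that $(0, 1_{A_1}, 1_{A_2})$ is a transformation from $\bigl(\xymatrix@1{0 \ar[r]^0 & A_1 \ar[r]^{a_1} & A_2}\bigr)$ to $A$ in $\sA^{\Delta_2}$, I would simply verify the two naturality squares: $0 \cdot a_0 = 0 \cdot 1_{A_1}$ and $1_{A_1} \cdot a_1 = a_1 \cdot 1_{A_2}$, both of which are trivial. For the epimorphism claim, I would invoke the criterion in corollary \ref{kor:monoepicrit}(b) applied to $[f]=[0,1_{A_1},1_{A_2}]$: we need $s \colon A_1 \to A_0$, $t \colon A_1 \to A_1$, $u \colon A_2 \to A_1$, $v \colon A_2 \to A_2$ with $s a_0 + t f_1 + a_1 u = 1_{A_1}$ and $t a_1 = a_1 v$. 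The choice $s := 0$, $t := 1_{A_1}$, $u := 0$, $v := 1_{A_2}$ makes both equations trivial (namely $0 + 1 + 0 = 1$ and $a_1 = a_1$). This establishes (a).

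For (b), the quickest route is duality. The isomorphism $\D_\sA \colon \Adel(\sA)^{\op} \to \Adel(\sA^{\op})$ sends $A = \bigl(\xymatrix@1{A_0 \ar[r]^{a_0} & A_1 \ar[r]^{a_1} & A_2}\bigr)$ to $\bigl(\xymatrix@1{A_2 \ar[r]^{a_1^{\op}} & A_1 \ar[r]^{a_0^{\op}} & A_0}\bigr)$, and carries the object $\bigl(\xymatrix@1{A_0 \ar[r]^{a_0} & A_1 \ar[r]^0 & 0}\bigr) \in \sL(\sA)$ to $\bigl(\xymatrix@1{0 \ar[r]^0 & A_1 \ar[r]^{a_0^{\op}} & A_0}\bigr) \in \sR(\sA^{\op})$, and the would-be monomorphism $[1,1,0]$ to the epimorphism $[0, 1, 1]$ given by part (a) applied to $\D_\sA(A)$ in $\Adel(\sA^{\op})$. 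Since $\D_\sA$ is an isomorphism of categories, it preserves monomorphisms and epimorphisms under the passage to the opposite, so $[1,1,0]$ is a monomorphism in $\Adel(\sA)$; the target lies in $\sL(\sA)$ by inspection. Alternatively, one may verify (b) directly from corollary \ref{kor:monoepicrit}(a) with the analogous witnesses $s := 0$, $t := 1_{A_0}$, $u := 0$, $v := 1_{A_1}$.

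There is no real obstacle here: once corollary \ref{kor:monoepicrit} is in hand and the subcategories $\sR(\sA)$, $\sL(\sA)$ have been defined, both halves reduce to one line of arithmetic. The only thing to be careful about is matching the indices and directions of $s, t, u, v$ in the criterion correctly, which is why I prefer to write out the diagram and read off the witnesses rather than guess.
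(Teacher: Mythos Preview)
Your proof is correct and matches the paper's approach essentially verbatim: both verify well-definedness of $[0,1,1]$ from the two naturality squares, then apply corollary~\ref{kor:monoepicrit}(b) with the witnesses $s=0$, $t=1$, $u=0$, $v=1$, and obtain (b) by duality via $\D_{\sA}$. Your additional direct verification of (b) via corollary~\ref{kor:monoepicrit}(a) is also correct and a nice touch.
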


\begin{proof} Ad (a).
The morphism $[0,1,1] \colon \big(\xymatrix@1{0\,\ar[r]^{0}&\,A_1\,\ar[r]^{a_1}&\,A_2} \big) \to A$ is a well-defined morphism in $\Adel(\sA)$ since we have $0 \cdot a_0 =0= 0 \cdot 1$ and $1 \cdot a_1 = a_1 = a_1 \cdot 1$. Using $0 \colon A_1 \to A_0$, $1 \colon A_1 \to A_1$, $0 \colon A_2 \to A_1$ and $1 \colon A_2 \to A_2$, we obtain $0 \cdot a_0 + 1 \cdot 1 + a_1 \cdot 0 = 1$ and $1 \cdot a_1 = a_1 \cdot 1$. Corollary~\ref{kor:monoepicrit}~(b) says that $[0,1,1]$ is an epimorphism.
\begin{align*}
\xymatrix{
0 \ar[d]^0 \ar[r]^0 & A_1 \ar[r]^{a_1} \ar@<.4ex>[d]^1 & A_2 \ar@<.4ex>[d]^1 \\
A_0 \ar@<.4ex>[r]^{a_0} & A_1 \ar@<.4ex>[l]^0 \ar@<.4ex>[r]^{a_1} \ar@<.4ex>[u]^1 & A_2 \ar@<.4ex>[l]^0 \ar@<.4ex>[u]^1
}
\end{align*}
Ad (b). This is dual to (a) using the isomorphism of categories $\D_{\sA} \colon \Adel(\sA)^{\op} \to \Adel(\sA^{\op})$.
\end{proof}

\begin{lem} \label{lem:Adelcokernel}
Given $A \in \Ob(\Adel(\sA))$, there exists a right-exact sequence $\xymatrix@1{P \,\ar[r]^{[f]} &\, Q\, \are[r]^{[c]} &\,A}$ in $\Adel(\sA)$ with $P,Q \in \Ob(\sR(\sA))$ such that a kernel of $[f]$ is projective.
\end{lem}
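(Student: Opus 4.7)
The plan is to iterate theorem \ref{thrm:adelenough}(a). Applying it to $A = (\xymatrix@1{A_0 \ar[r]^{a_0} & A_1 \ar[r]^{a_1} & A_2})$ gives $Q := (\xymatrix@1{0 \ar[r] & A_1 \ar[r]^{a_1} & A_2}) \in \Ob(\sR(\sA))$ together with an epimorphism $[c] := [0,1,1] \colon Q \to A$. Let $[\k(c)] \colon K \to Q$ be the kernel of $[c]$ constructed in theorem \ref{thrm:kernelcokernel}(a). A second application of theorem \ref{thrm:adelenough}(a) now to $K$ itself produces $P := (\xymatrix@1{0 \ar[r] & K_1 \ar[r]^{k_1} & K_2}) \in \Ob(\sR(\sA))$ together with an epimorphism $[c'] := [0,1,1] \colon P \to K$. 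Set $[f] := [c'] \cdot [\k(c)] \colon P \to Q$. By construction, $P, Q \in \Ob(\sR(\sA))$.

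To check that $\xymatrix@1{P \ar[r]^{[f]} & Q \ar[r]^{[c]} & A}$ is right-exact, note that $[f][c] = [c'][\k(c)][c] = 0$ since $[\k(c)]$ is a kernel of $[c]$. For the universal property, suppose $[g] \colon Q \to X$ satisfies $[f][g] = 0$; then $[c'][\k(c)][g] = 0$ and, since $[c']$ is an epimorphism, $[\k(c)][g] = 0$. By remark \ref{rem:kernelcokernelabeliancategory}(a), $[c]$ is a cokernel of $[\k(c)]$, so $[g]$ factors uniquely through $[c]$. Hence $[c]$ is a cokernel of $[f]$.

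It remains to verify that a kernel of $[f]$ is projective. Because $[\k(c)]$ is a monomorphism, a kernel of $[f]$ coincides with a kernel of $[c']$, which by theorem \ref{thrm:kernelcokernel}(a) (after applying the canonical isomorphism $0 \moplus A_0 \cong A_0$) may be presented as
\begin{align*}
\K(c') \;\cong\; \Bigg(\xymatrix@C=15mm{A_0 \ar[r]^-{\sm{0 & 0 & 1}} & A_1\moplus A_0\moplus A_0 \ar[r]^-{\sm{a_1 & 1 & 1 & 0 \\ 0 & -a_0 & 0 & 1 \\ 0 & 0 & 0 & -1}} & A_2\moplus A_1\moplus A_1\moplus A_0}\Bigg).
\end{align*}
The main obstacle is to show this is projective. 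The plan is to apply lemma \ref{lem:iso}: first permute the summands of the middle and of the right component so that the $K_0$-copy of $A_0$ moves to the front, then apply a shear on the middle that eliminates the remaining coupling in the last column of the second differential. This exhibits $\K(c')$ as a direct sum $Z \moplus X$, where $Z := (\xymatrix@1{A_0 \ar[r]^{1_{A_0}} & A_0 \ar[r]^{-1_{A_0}} & A_0})$ is a zero object in $\Adel(\sA)$ (take $s := 1_{A_0}$ and $t := 0$ in remark \ref{rem:adelbasic}(c)) and
\begin{align*}
X \;:=\; \Bigg(\xymatrix@C=14mm{0 \ar[r] & A_1\moplus A_0 \ar[r]^-{\sm{a_1 & 1 & 1 \\ 0 & -a_0 & 0}} & A_2\moplus A_1\moplus A_1}\Bigg) \;\in\; \Ob(\sR(\sA)).
\end{align*}
Since $Z \cong 0$, it follows that $\K(c') \cong X$ in $\Adel(\sA)$, so $\ker[f]$ is projective by proposition \ref{prop:projectivesinjectives}(a).
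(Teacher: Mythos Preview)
Your proof is correct and follows the same overall architecture as the paper: cover $A$ by $Q\in\sR(\sA)$, take a kernel $K$, cover $K$ by $P\in\sR(\sA)$, set $[f]$ to be the composite, and verify right-exactness and projectivity of $\ker[f]$.

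The one place you diverge is in the projectivity argument, and here the paper's route is noticeably cleaner. You observe (correctly) that $\ker[f]=\ker[c']$ because $[\k(c)]$ is monic, and then compute $\K(c')$; but since $K_0\cong A_0\neq 0$ in general, this forces you into the explicit permutation-and-shear manipulation to split off a null-homotopic summand. The paper instead applies the kernel formula of theorem~\ref{thrm:kernelcokernel}(a) \emph{directly to $f\colon P\to Q$}. Because both $P_0=0$ and $Q_0=0$, one gets immediately
\[
\K(f)=\Big(\xymatrix@C=12mm{0\moplus 0 \ar[r] & P_1\moplus 0 \ar[r]^-{\sm{p_1&f_1\\0&0}} & P_2\moplus Q_1}\Big)\;\cong\;\Big(\xymatrix@C=10mm{0\ar[r] & P_1 \ar[r]^-{\sm{p_1&f_1}} & P_2\moplus Q_1}\Big)\in\Ob(\sR(\sA)),
\]
with no further work needed. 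So your detour through $\K(c')$ is avoidable: computing the kernel of the composite rather than of the first factor exploits that \emph{both} endpoints lie in $\sR(\sA)$. (A minor quibble: your phrase ``coupling in the last column'' is not quite where the obstruction sits after permuting, but the intended shear does work, as one checks by hand.)
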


\begin{proof}
We may choose an epimorphism $[c] \colon Q \to A$ with $Q \in \Ob(\sR(\sA))$, cf. theorem \ref{thrm:adelenough}.
Let $[k] \colon K \to Q$ be a kernel of $[c]$. Again, we may choose an epimorphism $[e] \colon P \to K$ with $P \in \Ob(\sR(\sA))$. Let $[f] := [e][k]$.
\begin{align*}
\xymatrix{
P \are[dr]_{[e]} \ar[rr]^{[f]} && Q \are[r]^*+<0.3mm,0.3mm>{\scriptstyle [c]} &A\\
& K \arm[ur]_{[k]}
}
\end{align*}
Remark \ref{rem:kernelcokernelabeliancategory} says that $[c]$ is a cokernel of $[k]$. Since $[e]$ is epimorphic, $[c]$ is also a cokernel of $[f]$.

A kernel of $[f]$ is given by
\begin{align*}
\K((0,f_1,f_2)) = \Big( \xymatrix@1@C=12mm{0 \oplus 0 \ar[r]^-0 & P_1 \oplus 0 \ar[r]^-{\sm{p_1 & f_1\\0&0}}&P_2 \oplus Q_1}\Big),
\end{align*}
cf. theorem \ref{thrm:kernelcokernel}. Now $\K((0,f_1,f_2))$ is isomorphic to $\big( \xymatrix@1@C=10mm{0\, \ar[r]^0 & \,P_1\, \ar[r]^-{\sm{p_1&f_1}} & \,P_2 \oplus Q_1} \big) \in \Ob(\sR(\sA))$ in $\Adel(\sA)$, cf. lemma \ref{lem:iso}, and therefore projective.
\end{proof}

\begin{kor}
The projective dimension of $\Adel(\sA)$ is at most two due to the previous lemma~\ref{lem:Adelcokernel}. Dually, the injective dimension of $\Adel(\sA)$ is also at most two since\linebreak $\D_{\sA} \colon \Adel(\sA)^{\op} \to \Adel(\sA^{\op})$ is an isomorphism of categories.
\end{kor}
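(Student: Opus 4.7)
The plan is to turn the sequence supplied by Lemma~\ref{lem:Adelcokernel} into an actual projective resolution of length two and then invoke duality for the injective bound.

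First, fix $A \in \Ob(\Adel(\sA))$. Lemma~\ref{lem:Adelcokernel} produces a right-exact sequence $\xymatrix@1{P\,\ar[r]^{[f]}&\,Q\,\are[r]^{[c]}&\,A}$ with $P,Q \in \Ob(\sR(\sA))$ and a kernel $[k]\colon K \to P$ of $[f]$ such that $K$ is projective; by Proposition~\ref{prop:projectivesinjectives}~(a) the objects $P$ and $Q$ are projective as well. I then splice these pieces together into the complex $\xymatrix@1{0 \,\ar[r]&\,K \,\ar[r]^{[k]}&\,P \,\ar[r]^{[f]}&\,Q\,\ar[r]^{[c]}&\,A\,\ar[r]&\,0}$. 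I verify exactness at each spot: at $A$ by right-exactness from the lemma; at $Q$ because $[c]$ is a cokernel of $[f]$ (stated in the lemma); at $P$ because $[k]$ is by construction a kernel of $[f]$; and at $K$ because $[k]$ is a kernel hence monomorphic. Thus $\xymatrix@1{0\,\ar[r]&\,K\,\armfl{0.4}[r]&\,P\,\ar[r]&\,Q\,\are[r]&\,A\,\ar[r]&\,0}$ is a projective resolution of $A$ of length at most two, so the projective dimension of $\Adel(\sA)$ is at most two.

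For the injective bound, I apply the just-proved statement to the additive category $\sA^{\op}$: every object of $\Adel(\sA^{\op})$ admits a projective resolution of length at most two. By Theorem/Definition~\ref{defn:duality} the functor $\D_{\sA}\colon \Adel(\sA)^{\op} \to \Adel(\sA^{\op})$ is an isomorphism of (abelian) categories, so it transports projective resolutions in $\Adel(\sA^{\op})$ to projective resolutions in $\Adel(\sA)^{\op}$. Projective objects and resolutions in $\Adel(\sA)^{\op}$ correspond to injective objects and coresolutions in $\Adel(\sA)$, so for each $B \in \Ob(\Adel(\sA))$ we obtain an injective coresolution $\xymatrix@1{0\,\ar[r]&\,B\,\ar[r]&\,I_0\,\ar[r]&\,I_1\,\ar[r]&\,I_2\,\ar[r]&\,0}$ of length at most two. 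Hence the injective dimension of $\Adel(\sA)$ is at most two.

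No step is really the main obstacle since both halves just assemble previously established facts; the only thing worth writing carefully is the bookkeeping that turns the three-term right-exact sequence of Lemma~\ref{lem:Adelcokernel} together with its projective kernel into an honest exact resolution, and the translation of ``projective resolution in $\sC^{\op}$'' into ``injective coresolution in $\sC$'' via $\D_\sA$.
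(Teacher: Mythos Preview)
Your proof is correct and follows precisely the approach indicated by the paper, which offers no further argument beyond pointing to Lemma~\ref{lem:Adelcokernel} and the duality isomorphism $\D_{\sA}$. You have simply spelled out the routine details the paper leaves implicit: assembling the lemma's data into an exact projective resolution of length two, and translating the statement through $\D_{\sA}$ for the injective bound.
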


\section{Additive functors and transformations}
\label{sec:additivefunctorsandtransformations}
\begin{thrmdefn} \label{defn:adelfunctortransformation}
Recall that $\sA$ is an additive category. \\
Suppose given an additive category $\sB$.
\begin{itemize}
	\item[(a)] Suppose given an additive functor $F \colon \sA \to \sB$. \\
By setting 
\begin{align*}
(\Adel(F))(X) := \Big( \xymatrix{F(X_0) \ar[r]^{F(x_0)} & F(X_1) \ar[r]^{F(x_1)} & F(X_2)} \Big)
\end{align*}
for $X \in \Ob(\Adel(\sA))$ and 
\begin{align*}
(\Adel(F))([f]) := [F(f_0), F(f_1),F(f_2)]
\end{align*}
for $[f] \in \Mor(\Adel(\sA))$, we obtain an exact functor $\Adel(F) \colon \Adel(\sA) \to \Adel(\sB)$.

The equation $\Adel(F)^{\op}=\D_{\sB}^{-1} \circ  \Adel(F^{\op}) \circ \D_{\sA}$ holds, cf. definition \ref{defn:duality}.

The transformation $\varepsilon^F \colon \I_{\sB} \circ F \to \Adel(F) \circ \I_{\sA}$ defined by
\begin{align*}
(\varepsilon^F)_S := [0,1,0] \colon \Big( \xymatrix{0 \ar[r]^-{0} & F(S) \ar[r]^-{0} & 0} \Big) \to \Big( \xymatrix{F(0) \ar[r]^-{0} & F(S) \ar[r]^-{0} & F(0)} \Big)
\end{align*}
for $S \in \Ob \sA$ is an isotransformation. \\
In particular, we have $\I_{\sB} \circ F = \Adel(F)  \circ \I_{\sA}$ if $F(0_{\sA})= 0_{\sB}$ holds.
\begin{align*}
\xymatrix@C=14mm{
\sA \ar[r]^F \ar[d]_{\I_{\sA}}& \sB \ar[d]^{\I_{\sB}}\\
\Adel(\sA) \ar[r]^{\Adel(F)} & \Adel(\sB)
}
\end{align*}
\item[(b)] Suppose given additive functors $F,G \colon \sA \to \sB$ and a transformation $\alpha \colon F \Rightarrow G$.\\
By setting
\begin{align*}
(\Adel(\alpha))_X := \left[\alpha_{X_0}, \alpha_{X_1}, \alpha_{X_2}\right] \colon (\Adel(F))(X) \to (\Adel(G))(X)
\end{align*}
for $X \in \Ob(\Adel(\sA))$, we obtain a transformation $\Adel(\alpha) \colon \Adel(F) \to \Adel(G)$.

If $\alpha$ is an isotransformation, then $\Adel(\alpha)$ is also an isotransformation.
\end{itemize}
Cf. remark \ref{rem:last}.
\end{thrmdefn}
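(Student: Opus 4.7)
The plan is to define the assignment at the level of $\sA^{\Delta_2}$ and then descend via the universal property of the factor category. Set $\tilde F(X) := \bigl(F(X_0) \to F(X_1) \to F(X_2)\bigr)$ and $\tilde F((f_0,f_1,f_2)) := (F(f_0), F(f_1), F(f_2))$; functoriality and additivity are inherited componentwise from $F$. If $f$ is null-homotopic, witnessed by $sb_0 + a_1 t = f_1$, then additivity of $F$ yields $F(s)F(b_0) + F(a_1)F(t) = F(f_1)$, so $\tilde F(f)$ is null-homotopic. Theorem \ref{thrm:upf}(a) then supplies $\Adel(F)$ with the stated formulas on objects and morphisms.

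For exactness I would invoke lemma \ref{lem:exactfunctor}: it suffices to exhibit, for each $[f] \in \Mor(\Adel(\sA))$, one kernel whose image under $\Adel(F)$ remains left-exact. Using the explicit formula of theorem \ref{thrm:kernelcokernel}, $\K([f])$ is built from direct sums of the $A_i$ and $B_j$ together with the entries of $f$. Since $F$ commutes with finite direct sums up to the canonical isomorphism (proposition \ref{prop:additivefunctor}), lemma \ref{lem:iso} provides an isomorphism $\Adel(F)(\K([f])) \cong \K(\Adel(F)([f]))$ compatible with the inclusions, so $\Adel(F)[\k(f)]$ is a kernel of $\Adel(F)([f])$. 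The cokernel case is dual, or can be deduced from the duality formula $\Adel(F)^{\op} = \D_\sB^{-1} \circ \Adel(F^{\op}) \circ \D_\sA$, which I would check by direct evaluation on objects and morphisms of $\Adel(\sA)^{\op}$ using definition \ref{defn:duality}. For the isotransformation $\varepsilon^F$: each component $(\varepsilon^F)_S = [0,1,0]$ is an isomorphism by lemma \ref{lem:iso} combined with $F(0_\sA) \cong 0_\sB$ from proposition \ref{prop:additivefunctor}, which supplies the required isomorphisms $0 \to F(0)$ in $\sB$; naturality in $S$ is immediate, since the middle component on either side of the naturality square is $F(f)$. The final assertion when $F(0_\sA) = 0_\sB$ is a definitional equality of the two composites.

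\textbf{Part (b).} Applying naturality of $\alpha$ to $x_0$ and $x_1$ gives $F(x_i)\alpha_{X_{i+1}} = \alpha_{X_i} G(x_i)$, so $(\alpha_{X_0},\alpha_{X_1},\alpha_{X_2})$ is a well-defined transformation in $\sB^{\Delta_2}$, making $\Adel(\alpha)_X$ a morphism in $\Adel(\sB)$. Naturality of $\Adel(\alpha)$ in $X$ reduces componentwise to naturality of $\alpha$ applied to each $f_i$ of a morphism $[f] \colon X \to Y$. If $\alpha$ is an isotransformation, each $\alpha_{X_i}$ is an isomorphism in $\sB$, so lemma \ref{lem:iso} yields that $\Adel(\alpha)_X$ is an isomorphism in $\Adel(\sB)$. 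I expect the main obstacle to be the exactness assertion in (a): the remaining pieces reduce to componentwise calculations, but exactness requires carefully combining the direct-sum preservation from proposition \ref{prop:additivefunctor} with the explicit kernel/cokernel formulas of theorem \ref{thrm:kernelcokernel} and lemma \ref{lem:iso} to identify $\Adel(F)$ applied to the chosen kernel with a kernel of $\Adel(F)([f])$.
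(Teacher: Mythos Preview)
Your proposal is correct and follows essentially the same route as the paper: define $\tilde F$ at the level of $\sA^{\Delta_2}$, check that null-homotopic morphisms go to null-homotopic morphisms, descend via theorem~\ref{thrm:upf}, then for exactness combine the explicit kernel formula of theorem~\ref{thrm:kernelcokernel} with proposition~\ref{prop:additivefunctor} and lemma~\ref{lem:iso}, and handle right-exactness via the duality identity $\Adel(F)^{\op} = \D_\sB^{-1}\circ\Adel(F^{\op})\circ\D_\sA$; part~(b) is componentwise. The only cosmetic differences are that the paper lets $\tilde F$ land directly in $\Adel(\sB)$ rather than in $\sB^{\Delta_2}$, and gives the explicit inverse $[0,1,0]$ of $(\varepsilon^F)_S$ and of $\Adel(\alpha)_X$ rather than invoking lemma~\ref{lem:iso}.
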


\begin{proof}
Ad (a).

Let $\tilde F \colon \sA^{\Delta_2} \to \Adel(\sB)$ be defined by 
$\tilde F(X) :=  \Big( \xymatrix@1{F(X_0)\, \ar[r]^{F(x_0)} & \,F(X_1)\, \ar[r]^{F(x_1)} &\, F(X_2)} \Big)$ for \linebreak $X \in \Ob(\sA^{\Delta_2})$ and $\tilde F(f) := [F(f_0), F(f_1),F(f_2)]$ for $f \in \Mor(\sA^{\Delta_2})$. This is a well-defined additive functor because we have 
\begin{align*}
F(x_0) F(f_1) = F(x_0f_1) = F(f_0 y_0) F(f_0) F(y_0),
\end{align*}
\begin{align*}
F(x_1) F(f_2) = F(x_1 f_2) = F(f_1 y_1) = F(f_1)F(y_1),
\end{align*}
\begin{align*}
\tilde F(fg)& = [F(f_0g_0),F(f_1g_1),F(f_2g_2)] \\&= [F(f_0)F(g_0), F(f_1)F(g_1), F(f_2)F(g_2)] \\&=[F(f_0), F(f_1),F(f_2)][F(g_0), F(g_1),F(g_2)]
\\&=  \tilde F(f) \tilde F(g), 
\end{align*}
\begin{align*}
\tilde F(1_X) = [F(1_{X_0}),F(1_{X_1}),F(1_{X_2})] = [1_{F(X_0)},1_{F(X_1)},1_{F(X_2)}] = 1_{\tilde F(X)}
\end{align*}
and
\begin{align*}
F(f+h)&= [F(f_0+h_0),F(f_1+h_1),F(f_2+h_2)]\\ &=[F(f_0)+F(h_0),F(f_1)+F(h_1),F(f_2)+F(h_2)]\\& = [F(f_0), F(f_1),F(f_2)]+[F(h_0), F(h_1),F(h_2)]\\&
 =\tilde F(f) + \tilde F(h)
\end{align*}
for $\xymatrix@1{A \,\ar@<0.4ex>[r]^{f}\ar@<-0.4ex>[r]_{h}&\,B\,\ar[r]^{g}&\,C}$ in $\sA^{\Delta_2}$.

Suppose given $\xymatrix@1{X \,\ar[r]^f &\,Y}$ in $\sA^{\Delta_2}$ with $f$ null-homotopic. There exist morphisms $s \colon X_1 \to Y_0$ and $t \colon X_2 \to Y_1$ in $\sA$ with $sy_0 + x_1 t = f_1$. Using $F(s) \colon F(X_1) \to F(Y_0)$ and $F(t) \colon F(X_2) \to F(Y_1)$, we obtain $F(s)F(y_0) + F(x_1) F(t) = F(sy_0+x_1t) = F(f_1)$.

Theorem \ref{thrm:upf} now gives the additive functor $\Adel(F) \colon \Adel(\sA) \to \Adel(\sB)$ with $\Adel(F) \circ \R_{\sA} = \tilde F$.

We want to show that $\Adel(F)^{\op}=\D_{\sB}^{-1} \circ  \Adel(F^{\op}) \circ \D_{\sA}$ holds. \\
Suppose given $\xymatrix@1{X \,\ar[r]^{[f]} &\, Y}$ in $\Adel(\sA)$. We have
\begin{align*}
(\D_{\sB}^{-1} \circ  \Adel(F^{\op}) \circ \D_{\sA})(X)& = (\D_{\sB}^{-1} \circ  \Adel(F^{\op}))\Big( \Big(\xymatrix{X_2 \ar[r]^{x_1^{\op}} & X_1 \ar[r]^{x_0^{\op}} & X_0} \Big)\Big)\\
&=\D_{\sB}^{-1} \Big( \Big(\xymatrix{F(X_2) \ar[r]^{F(x_1)^{\op}} & F(X_1) \ar[r]^{F(x_0)^{\op}} & F(X_0)} \Big)\Big)\\
&=\Big(\xymatrix{F(X_0) \ar[r]^{F(x_0)} & F(X_1) \ar[r]^{F(x_1)} & F(X_2)} \Big)\\
&= (\Adel(F)^{\op})(X)
\end{align*}
and
\begin{align*}
(\D_{\sB}^{-1} \circ  \Adel(F^{\op}) \circ \D_{\sA})([f]^{\op})& =  (\D_{\sB}^{-1} \circ  \Adel(F^{\op}))([f_2^{\op},f_1^{\op},f_0^{\op}])\\
&=\D_{\sB}^{-1} ([F(f_2)^{\op},F(f_1)^{\op},F(f_0)^{\op}])\\
&=[F(f_0),F(f_1),F(f_2)]^{\op}\\
&=(\Adel(F)^{\op})([f]^{\op}).
\end{align*}
Next, we show that $\Adel(F)$ is an exact functor.\\
Suppose given $\xymatrix@1{X \,\ar[r]^{[f]} &\, Y}$ in $\Adel(\sA)$. A kernel of $(\Adel(F))([f])$ is given by
\begin{align*}
&\left[\sm{1\\0},\sm{1\\0},\sm{1\\0}\right] \colon \\
&\Bigg(\xymatrix@C=20mm{ F(X_0)\mathord \oplus F(Y_0) \ar[r]^{\sm{F(x_0)&0\\0&1}} & F(X_1)\mathord \oplus F(Y_0) \ar[r]^{\sm{F(x_1)&F(f_1)\\0&-F(y_0)}} &  F(X_2) \mathord\oplus F(Y_1)} \Bigg) \to (\Adel(F))(A),
\end{align*}
cf. theorem \ref{thrm:kernelcokernel}.\\
By applying lemma \ref{lem:iso} with isomorphisms of the form $F(S) \oplus F(T) \stackrel{\sim}\longrightarrow F(S\oplus T)$  in $\sB$ for 
$S,T \in \Ob \sA$, we see that 
\begin{align*}
&(\Adel(F))([\k(f)]) = \left[F(\sm{1\\0}),F(\sm{1\\0}),F(\sm{1\\0})\right] \colon \\
&\Bigg(\xymatrix@C=20mm{ F(X_0\mathord \oplus Y_0) \ar[r]^{F\left(\sm{x_0&0\\0&1}\right)} & F(X_1\mathord \oplus Y_0) \ar[r]^{F\left(\sm{x_1&f_1\\0&-y_0}\right)} &  F(X_2 \mathord\oplus Y_1)} \Bigg) \to (\Adel(F))(A)
\end{align*}
is a kernel of $(\Adel(F))([f])$ as well, cf. proposition \ref{prop:additivefunctor}.

We conclude that $\Adel(F)$ is left-exact, cf. lemma \ref{lem:exactfunctor}.

The functor $\Adel(F^{\op}) \colon \Adel(\sA^{\op}) \to \Adel(\sB^{\op})$ is left-exact, so
 \\ $\Adel(F)^{\op}=\D_{\sB}^{-1} \circ  \Adel(F^{\op}) \circ \D_{\sA} \colon \Adel(\sA)^{\op} \to \Adel(\sB)^{\op}$ is left-exact too. 

We conclude that $\Adel(F)$ is also right-exact and therefore exact.

Now we want to show that $\varepsilon^F$ is an isotransformation. 

The morphisms $(\varepsilon^F)_S$ are well-defined for $S \in \Ob \sA$ since we have $0 \cdot 1 = 0=0 \cdot 0$ and $0 \cdot 0 = 0 = 1 \cdot 0$.\\
Note that $F(0_{\sA})$ is a zero object in $\sB$, cf. proposition \ref{prop:additivefunctor}.\\
The inverse of $(\varepsilon^F)_S$ is given by 
\begin{align*}
[0,1,0] \colon  \Big( \xymatrix{F(0) \ar[r]^-{0} & F(S) \ar[r]^-{0} & F(0)} \Big)      \to   \Big( \xymatrix{0 \ar[r]^-{0} & F(S) \ar[r]^-{0} & 0} \Big).
\end{align*}
for $S \in \Ob \sA$.\\
We have
\begin{align*}
(\I_{\sB} \circ F)(u) (\varepsilon^F)_T = [0,F(u),0] [0,1,0] = [0,1,0][F(0),F(u),F(0)] =(\varepsilon^F)_S (\Adel(F) \circ \I_{\sA})(u)
\end{align*}
for $\xymatrix@1{S\, \ar[r]^{u}&\,T}$ in $\sA$, which implies the naturality of $\varepsilon^F$.

Ad (b).

The transformation $\Adel(\alpha)$ is well-defined since we have 
$F(x_0)\alpha_{X_1}=\alpha_{X_0}G(x_0)$, $F(x_1) \alpha_{X_2} = \alpha_{X_1} G(x_1)$ and 
\begin{align*}
(\Adel(F))([f]) (\Adel(\alpha))_Y &= [F(f_0),F(f_1),F(f_2)] [\alpha_{Y_0},\alpha_{Y_1},\alpha_{Y_2}] \\
&= [F(f_0)\alpha_{Y_0},F(f_1)\alpha_{Y_1},F(f_2)\alpha_{Y_2}]\\
&= [\alpha_{X_0}G(f_0),\alpha_{X_1}G(f_1),\alpha_{X_2}G(f_2)]\\
&=[\alpha_{X_0},\alpha_{X_1},\alpha_{X_2}][G(f_0),G(f_1),G(f_2)] \\
&= (\Adel(\alpha))_X (\Adel(G))([f])
\end{align*}
for $\xymatrix@1{X\, \ar[r]^{[f]} & \,Y}$ in $\Adel(\sA)$.

If $\alpha$ is an isotransformation, then the inverse of $(\Adel(\alpha))_X$ is given by\\ $\left[(\alpha_{X_0})^{-1},(\alpha_{X_1})^{-1},(\alpha_{X_2})^{-1}\right]$ for $X \in  \Ob(\Adel(\sA))$.

\end{proof}

\begin{prop}
Suppose given $\xymatrix@1{ \sA\,  \ar@/^5mm/[rr]^F="F"   \ar[rr]|{\phantom{\rule{0.4mm}{0mm}}G\phantom{\rule{0.4mm}{0mm}}}="G" \ar@/_5mm/[rr]_H="H" & &\,\sB\, \ar@/^/[r]^K="K" \ar@/_/[r]_L="L" &\, \sC 
\ar@2"F"+<0mm,-2.6mm>;"G"+<0mm,1.4mm>_\alpha
\ar@2"G"+<0mm,-1.5mm>;"H"+<0mm,2.4mm>_\beta
\ar@2"K"+<0mm,-2.6mm>;"L"+<0mm,2.4mm>_\gamma}$ with additive categories $\sA$, $\sB$ and $\sC$ and additive functors $F, G,H,K$ and $L$.

The following equations hold.
\begin{itemize}
	\item[(a)] $\Adel(K \circ F) = \Adel(K) \circ \Adel(F)$
	\item[(b)] $\Adel(\alpha \beta) = \Adel(\alpha) \Adel(\beta)$
	\item[(c)] $\Adel(\gamma \star \alpha) = \Adel(\gamma) \star \Adel(\alpha)$
\end{itemize}
\end{prop}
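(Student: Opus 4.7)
The strategy is to verify each equation by direct computation at the level of objects and morphisms (for (a)) or at each component of a transformation (for (b) and (c)), unfolding the definitions from theorem/definition~\ref{defn:adelfunctortransformation}. Each of the three identities turns out to hold on the nose rather than merely up to iso, so no additional isomorphism-chasing is required.

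For (a), I would evaluate both functors on an arbitrary object $X = \big(\xymatrix@1{X_0\,\ar[r]^{x_0}&\,X_1\,\ar[r]^{x_1}&\,X_2}\big)$ in $\Adel(\sA)$ and on an arbitrary morphism $[f] = [f_0,f_1,f_2]$. By definition, $\Adel(K \circ F)(X) = \big(\xymatrix@1{(K\circ F)(X_0)\,\ar[r]&\,(K\circ F)(X_1)\,\ar[r]&\,(K\circ F)(X_2)}\big)$ with connecting morphisms $(K\circ F)(x_i) = K(F(x_i))$, while $(\Adel(K) \circ \Adel(F))(X)$ is computed by first forming $\Adel(F)(X)$ and then applying $\Adel(K)$, yielding the same diagram. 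Similarly, on morphisms both give $[K(F(f_0)), K(F(f_1)), K(F(f_2))]$.

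For (b), both $\Adel(\alpha \beta)$ and $\Adel(\alpha)\Adel(\beta)$ are transformations from $\Adel(F)$ to $\Adel(H)$. At $X \in \Ob(\Adel(\sA))$, the left-hand side is $[(\alpha\beta)_{X_0},(\alpha\beta)_{X_1},(\alpha\beta)_{X_2}] = [\alpha_{X_0}\beta_{X_0},\alpha_{X_1}\beta_{X_1},\alpha_{X_2}\beta_{X_2}]$ by convention~\ref{conv:transformation} and remark~\ref{rem:functorcategory}(b). The right-hand side is the composite $(\Adel(\alpha))_X \cdot (\Adel(\beta))_X = [\alpha_{X_0},\alpha_{X_1},\alpha_{X_2}]\cdot[\beta_{X_0},\beta_{X_1},\beta_{X_2}]$, which equals the same bracket by remark~\ref{rem:adelbasic}(a).

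For (c), I would first observe that by (a) the source and target of both sides coincide: both $\Adel(\gamma \star \alpha)$ and $\Adel(\gamma) \star \Adel(\alpha)$ go from $\Adel(K \circ F) = \Adel(K)\circ\Adel(F)$ to $\Adel(L \circ G) = \Adel(L)\circ\Adel(G)$. Evaluating at $X$, the left-hand side gives $[K(\alpha_{X_0})\gamma_{G(X_0)},\,K(\alpha_{X_1})\gamma_{G(X_1)},\,K(\alpha_{X_2})\gamma_{G(X_2)}]$ by unfolding $(\gamma \star \alpha)_{X_i} = K(\alpha_{X_i})\gamma_{G(X_i)}$. The right-hand side computes, using the formula for $\star$ together with $(\Adel(\gamma))_Y = [\gamma_{Y_0},\gamma_{Y_1},\gamma_{Y_2}]$ and the definition of $\Adel(K)$ on morphisms, to $[K(\alpha_{X_0}),K(\alpha_{X_1}),K(\alpha_{X_2})]\cdot[\gamma_{G(X_0)},\gamma_{G(X_1)},\gamma_{G(X_2)}]$, which matches componentwise by remark~\ref{rem:adelbasic}(a).

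There is no real obstacle here; the only mildly delicate point is keeping the source and target of the two transformations in (c) aligned, which is handled cleanly by invoking (a) before comparing components. Thus all three proofs reduce to routine unwinding of definitions.
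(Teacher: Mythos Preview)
Your proposal is correct and follows essentially the same approach as the paper: all three parts are proved by direct componentwise computation at an arbitrary object $X \in \Ob(\Adel(\sA))$ (and, for (a), also at an arbitrary morphism), unwinding the definitions of $\Adel$ on functors and transformations. The paper's proof is organized identically, with the only cosmetic difference being that it does not explicitly remark on the source/target alignment in (c) before computing.
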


\begin{proof}
Ad (a). Suppose given $\xymatrix@1{X\, \ar[r]^{[f]} &\, Y}$ in $\Adel(\sA)$. We have
\begin{align*}
( \Adel(K) \circ \Adel(F)) (X)&=(\Adel(K))\Big(\Big( \xymatrix{F(X_0) \ar[r]^{F(x_0)} & F(X_1) \ar[r]^{F(x_1)} & F(X_2)} \Big)\Big)\\
&= \Big( \xymatrix@C=16mm{(K \circ F)(X_0) \ar[r]^{(K \circ F)(x_0)} & (K \circ F)(X_1) \ar[r]^{(K \circ F)(x_1)} & (K \circ F)(X_2)} \Big)\\
&=(\Adel(K \circ F) )(X)
\end{align*}
and
\begin{align*}
( \Adel(K) \circ \Adel(F)) ([f])&=(\Adel(K))([F(f_0),F(f_1),F(f_2)])\\
&= [(K\circ F)(f_0),(K\circ F)(f_1),(K\circ F)(f_2)]\\
&=(\Adel(K \circ F) )([f]).
\end{align*}

Ad (b). Suppose given $X \in \Ob(\Adel(\sA))$. We have
\begin{align*}
(\Adel(\alpha)\Adel(\beta))_X &= (\Adel(\alpha))_X (\Adel(\beta))_X \\
&= [\alpha_{X_0},\alpha_{X_1},\alpha_{X_2}][\beta_{X_0},\beta_{X_1},\beta_{X_2}] \\
&= [\alpha_{X_0}\beta_{X_0},\alpha_{X_1}\beta_{X_1},\alpha_{X_2}\beta_{X_2}] \\
&= [(\alpha\beta)_{X_0},(\alpha\beta)_{X_1},(\alpha\beta)_{X_2}]\\
&= (\Adel(\alpha \beta))_X.
\end{align*}

Ad (c). Suppose given $X \in \Ob(\Adel(\sA))$. We have
\begin{align*}
(\Adel(\gamma) \star \Adel(\alpha))_X &= (\Adel(K))((\Adel(\alpha))_X) (\Adel(\gamma))_{(\Adel(G))(X)}\\
&= [K(\alpha_{X_0}),K(\alpha_{X_1}),K(\alpha_{X_2})] [\gamma_{G(X_0)},\gamma_{G(X_1)},\gamma_{G(X_2)}]\\
&= [K(\alpha_{X_0})\gamma_{G(X_0)},K(\alpha_{X_1})\gamma_{G(X_1)},K(\alpha_{X_2})\gamma_{G(X_2)}]\\
&=[ (\gamma \star \alpha)_{X_0},(\gamma \star \alpha)_{X_1},(\gamma \star \alpha)_{X_2}]\\
&= (\Adel(\gamma \star \alpha))_X.
\end{align*}
\end{proof}

\chapter{A universal property for the Adelman category}
\thispagestyle{empty}
\label{ch:universalproperty}
\section{The homology functor in the abelian case}
\label{sec:abeliancase}
Suppose given an abelian category $\sB$ throughout this section \ref{sec:abeliancase}.

\begin{lem}\label{lem:h1}
Suppose given $\xymatrix@1@C=10mm{A \oplus B \,\ar[r]^{\sm{1&f\\0&-b}}& \,A \oplus C}$ in $\sB$.

If $c \colon C \to D$ is a cokernel of $b$, then $\sm{fc\\-c}\colon A \oplus C \to D$ is a cokernel of $\sm{1 & f\\0&-b}$.
\end{lem}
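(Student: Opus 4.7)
The plan is to verify the cokernel property of $\sm{fc\\-c}$ by hand, using only that $c$ is a cokernel of $b$ and some $2\times 2$ block matrix arithmetic. There are three things to check: that the composite $\sm{1 & f\\0&-b}\sm{fc\\-c}$ vanishes, that any morphism killing $\sm{1 & f\\0&-b}$ factors through $\sm{fc\\-c}$, and that such a factorisation is unique.

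For vanishing of the composite, I would compute
\begin{align*}
\sm{1 & f\\0&-b}\sm{fc\\-c} \;=\; \sm{fc-fc\\bc} \;=\; \sm{0\\0},
\end{align*}
where $bc=0$ since $c$ is a cokernel of $b$.

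For the universal property, suppose given $\xymatrix@1{A\oplus C\,\ar[r]^-{g}&\,X}$ in $\sB$ with $\sm{1 & f\\0&-b}g=0$. Writing $g=\sm{g_A\\g_C}$ with $g_A\colon A\to X$ and $g_C\colon C\to X$, the equation $\sm{1 & f\\0&-b}g=\sm{g_A+fg_C\\-bg_C}=0$ yields $g_A=-fg_C$ and $bg_C=0$. Applying the universal property of the cokernel $c$ to $-g_C$ (which still satisfies $b\cdot(-g_C)=0$), I obtain a unique morphism $h\colon D\to X$ with $ch=-g_C$, and then
\begin{align*}
\sm{fc\\-c}h \;=\; \sm{fch\\-ch} \;=\; \sm{-fg_C\\g_C} \;=\; \sm{g_A\\g_C} \;=\; g.
\end{align*}

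For uniqueness, if $\sm{fc\\-c}h'=g$ for some other $h'\colon D\to X$, then the second component gives $-ch'=g_C$, so $ch'=-g_C=ch$, and uniqueness in the universal property of $c$ forces $h'=h$. No step looks like a genuine obstacle here; the only minor point to be careful about is the sign, i.e.\ to apply the universal property of $c$ to $-g_C$ rather than $g_C$ so that the second component of $\sm{fc\\-c}h$ matches $g_C$ correctly.
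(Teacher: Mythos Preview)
Your proof is correct and essentially identical to the paper's: both verify the composite vanishes, write a test morphism in components, use the cokernel property of $c$ to obtain the factoring map, and argue uniqueness from the second component. The only cosmetic difference is that the paper applies the universal property of $c$ to $g_C$ and then uses the negative of the resulting map, whereas you apply it directly to $-g_C$.
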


\begin{proof}
We have $\sm{1&f\\0&-b} \sm{fc\\-c} = \sm{0\\bc} =0$. Suppose given $\sm{g_1\\g_2} \colon A\oplus C \to X$ in $\sB$ with $\sm{1&f\\0&-b}\sm{g_1\\g_2}=0$. We have $g_1 + fg_2 = 0$ and $b g_2 = 0$. There exists $u \colon D \to X$ with $(-c)(-u) = cu=g_2$ since $c$ is a cokernel of $b$. We conclude that $\sm{fc\\-c} (-u) =\sm{-fg_2\\g_2} = \sm{g_1\\g_2}$ holds. The induced morphism $-u$ is unique because we necessarily have $(-c)(-u)=cu=g_2$.
\end{proof}

\begin{lem} \label{lem:h2}
Suppose given $\xymatrix@1{A \oplus B \,\ar[r]^{\sm{f&0\\0&1}}& \,C \oplus B}$ in $\sB$.
\begin{itemize}
	\item[(a)] 
If $c \colon C \to D$ is a cokernel of $f$ then $\sm{c\\0} \colon C \oplus B \to D$ is a cokernel of $\sm{f & 0\\0&1}$.
	\item[(b)]If $k \colon K \to A$ is a kernel of $f$, then $\sm{k&0}\colon K \to A \oplus B$ is a kernel of $\sm{f & 0\\0&1}$.
\end{itemize}
\end{lem}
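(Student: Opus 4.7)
The plan is to verify both statements directly from the universal properties of cokernels and kernels, exploiting the block-diagonal structure of $\sm{f & 0\\0 & 1}$: the $B$-summand is preserved by the identity, so any morphism that kills (or is killed by) the matrix is automatically zero on its $B$-component, and the whole factorisation problem reduces to one involving $f$ alone. This closely parallels the proof of the preceding Lemma \ref{lem:h1}.

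For part (a), I first check $\sm{f & 0\\0 & 1}\sm{c\\0} = \sm{fc \\ 0} = \sm{0\\0}$, using $fc = 0$. For the factorisation property, given $\sm{g_1\\g_2}\colon C\oplus B \to X$ with $\sm{f&0\\0&1}\sm{g_1\\g_2}=\sm{fg_1\\g_2}=0$, I extract $fg_1=0$ and $g_2=0$. The cokernel property of $c$ then supplies a unique $u\colon D \to X$ with $cu = g_1$, and then $\sm{c\\0}u = \sm{cu \\ 0} = \sm{g_1\\g_2}$. Uniqueness of the induced morphism for $\sm{c\\0}$ reduces at once to uniqueness for $c$, since any $u'\colon D \to X$ with $\sm{c\\0}u' = \sm{g_1\\g_2}$ must satisfy $cu' = g_1$.

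Part (b) is entirely analogous: one checks $\sm{k&0}\sm{f&0\\0&1} = \sm{kf & 0} = 0$, and for a test morphism $\sm{g_1 & g_2}\colon X \to A \oplus B$ with $\sm{g_1 & g_2}\sm{f&0\\0&1} = \sm{g_1 f & g_2} = 0$, the kernel property of $k$ yields a unique $u \colon X \to K$ with $uk = g_1$; then $u\sm{k & 0} = \sm{g_1 & g_2}$ and uniqueness is inherited from $k$. Alternatively, (b) follows from (a) by passing to the opposite category $\sB^{\op}$, in which $\sm{f & 0\\0&1}$ keeps its block-diagonal shape and kernels and cokernels exchange roles. There is no real obstacle here; it is bookkeeping with $2 \times 1$ and $1 \times 2$ matrices, with the universal property of $f$ doing all the work.
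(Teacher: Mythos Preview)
Your proof is correct and essentially identical to the paper's: it verifies the composite is zero, uses the block structure to reduce a test morphism to the $f$-component, and invokes the universal property of $c$ (resp.\ $k$). The paper handles (b) by the one-line remark ``This is dual to (a)'', which you also mention as an alternative.
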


\begin{proof}
Ad (a). We have $\sm{f&0\\0&1} \sm{c\\0} = \sm{fc\\0} = 0$. Suppose given $\sm{g_1 \\g_2} \colon C \oplus B \to X$ in $\sB$ with $\sm{f&0\\0&1} \sm{g_1\\g_2} = 0$. We have $fg_1=0$ and $g_2=0$. There exists $u \colon D \to X$ with $cu=g_1$ since $c$ is a cokernel of $f$. We conclude that $\sm{c\\0} u = \sm{cu\\0} =\sm{g_1\\0} = \sm{g_1 \\ g_2}$ holds. The induced morphism $u$ is unique because we necessarily have $cu=g_1$.

Ad (b). This is dual to (a).
\end{proof}

\begin{rem}
To prove the previous lemmata \ref{lem:h1} and \ref{lem:h2}, one may alternatively argue that a sequence $\xymatrix@1@C=11mm{A'\oplus B'\, \ar[r]^-{\sm{u&uf\\0&-b}} &\, A \oplus B\, \ar[r]^-{\sm{v&fc\\0&-c}}&\,A''\oplus B''}$ in $\sB$ is left-(right-)exact if and only if the sequence $\xymatrix@1@C=11mm{A'\oplus B'\, \ar[r]^-{\sm{u&0\\0&b}} & \,A \oplus B\, \ar[r]^-{\sm{v&0\\0&c}}&\,A''\oplus B''}$ is left-(right-)exact using the isomorphism $\sm{1&f\\0&-1} \colon A \oplus B \to A \oplus B$.
\begin{align*}
\xymatrix{
A'\oplus B'\ar[dd]^{1} \ar[rr]^{\sm{u&uf\\0&-b}} && A \oplus B \ar@<.5ex>[dd]^{\sm{1&f\\0&-1}} \ar[rr]^{\sm{v&fc\\0&-c}}&&A''\oplus B'' \ar[dd]^{1}  \\ \\
A'\oplus B' \ar[rr]^{\sm{u&0\\0&b}} && A \oplus B \ar@<.5ex>[uu]^{\sm{1&f\\0&-1}} \ar[rr]^{\sm{v&0\\0&c}}&&A''\oplus B''
}
\end{align*}

\end{rem}

\begin{lem} \label{lem:schubert}
Suppose given the following commutative diagram in $\sB$ with a right-exact sequence $\xymatrix@1{A_1\, \ar[r]^{a_1} & \,A_2\, \are[r]^*+<0.3mm,0.3mm>{\scriptstyle a_2}&\, A_3}$ and a left-exact sequence $\xymatrix@1{B_1 \,\arm[r]^{b_1} &\, B_2\, \ar[r]^{b_2} &\, B_3}$.
\begin{align*}
\xymatrix{
A_1 \ar[r]^{a_1} \ar[d]^{f_1}& A_2 \are[r]^*+<0.3mm,0.3mm>{\scriptstyle a_2}  \ar[d]^{f_2}& A_3  \ar[d]^{f_3}\\
B_1 \arm[r]^{b_1} & B_2 \ar[r]^{b_2} & B_3
}
\end{align*}
Suppose given kernels $k_i \colon K_i \to A_i$ of $f_i$ for $i \in [1,3]$. The induced morphisms between the kernels shall be $u \colon K_1 \to K_2$ and $v \colon K_2 \to K_3$, cf. lemma \ref{lem:inducedmorphisms}~(a).
\begin{align*}
\xymatrix{
K_1 \arm[d]^{k_1} \ar[r]^u & K_2 \arm[d]^{k_2} \ar[r]^v & K_3 \arm[d]^{k_3}\\
A_1 \ar[r]^{a_1} \ar[d]^{f_1}& A_2 \are[r]^*+<0.3mm,0.3mm>{\scriptstyle a_2}  \ar[d]^{f_2}& A_3  \ar[d]^{f_3}\\
B_1 \arm[r]^{b_1} & B_2 \ar[r]^{b_2} & B_3
}
\end{align*}
The sequence $\xymatrix@1{K_1\, \ar[r]^u & \,K_2 \,\ar[r]^v & \,K_3}$ is exact.
\end{lem}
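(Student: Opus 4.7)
The strategy is to invoke remark \ref{rem:exactsequence}: exactness at $K_2$ of the sequence $K_1 \to K_2 \to K_3$ amounts to saying that the image of $u$ is a kernel of $v$. I would verify this by choosing a kernel $\ell \colon L \to K_2$ of $v$, factoring $u$ as $u = p\ell$ through $\ell$, and showing that the induced morphism $p \colon K_1 \to L$ is epimorphic; once this is done, $u = p\ell$ is automatically an image factorisation of $u$, so $\ell$ simultaneously realises the image of $u$ and a kernel of $v$, and the conclusion follows by remark \ref{rem:exactsequence}.

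The vanishing $uv = 0$ is easy. Postcomposing with the monomorphism $k_3$ gives $uvk_3 = uk_2 a_2 = k_1 a_1 a_2 = 0$, where $a_1 a_2 = 0$ because $a_2$ is a cokernel of $a_1$ by right-exactness of the top row; since $k_3$ is monic, $uv = 0$. The universal property of $\ell = \ker(v)$ then yields a unique $p \colon K_1 \to L$ with $p\ell = u$.

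The main obstacle is proving that $p$ is epimorphic, and the plan is to carry this out by a pullback chase. Let $a_1 = e\bar{\jmath}$ be an image factorisation with $e \colon A_1 \to J$ epimorphic and $\bar{\jmath} \colon J \to A_2$ monomorphic; by lemma \ref{lem:imagekernelcokernel}~(b) applied to the right-exact top row, $\bar{\jmath}$ is a kernel of $a_2$. From $\ell v = 0$ one obtains $\ell k_2 a_2 = \ell v k_3 = 0$, so $\ell k_2 = m\bar{\jmath}$ for a unique $m \colon L \to J$. Form the pullback $P$ of $e$ along $m$, producing an epimorphism $q \colon P \to L$ (since epimorphisms are stable under pullback in the abelian category $\sB$) and a morphism $r \colon P \to A_1$ with $re = qm$, hence $ra_1 = re\bar{\jmath} = qm\bar{\jmath} = q\ell k_2$. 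At this point the left-exactness of the bottom row enters: $rf_1 b_1 = ra_1 f_2 = q\ell k_2 f_2 = 0$, and monicity of $b_1$ gives $rf_1 = 0$; so $r = sk_1$ for a unique $s \colon P \to K_1$. Then $suk_2 = sk_1 a_1 = ra_1 = q\ell k_2$, monicity of $k_2$ yields $su = q\ell$, i.e.\ $sp\ell = q\ell$, and monicity of $\ell$ yields $sp = q$; since $q$ is epimorphic, so is $p$.

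With $p$ epimorphic, $u = p\ell$ is an image factorisation of $u$ and $\ell$ is a kernel of $v$ by construction, so the sequence $L \to K_2 \to K_3$ is left-exact. Remark \ref{rem:exactsequence} then gives exactness of $K_1 \to K_2 \to K_3$.
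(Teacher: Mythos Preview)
Your proof is correct. The argument via image factorisation of $a_1$, pullback along the epimorphism $e$, and the subsequent diagram chase is sound; each step uses only facts available in the paper (lemma \ref{lem:imagekernelcokernel}, remark \ref{rem:exactsequence}, and the standard stability of epimorphisms under pullback in abelian categories).

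By contrast, the paper does not prove this lemma at all: it simply cites \cite[prop~13.5.9~(a)]{schubert}. So your contribution here is strictly more than what the paper offers --- you give a self-contained argument where the paper defers to an external reference. Your approach is essentially the classical element-free proof of this fragment of the snake lemma, and it fits well with the tools the paper has set up.
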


\begin{proof}
See \cite[prop~13.5.9~(a)]{schubert}.
\end{proof}

The following lemma is a part of \cite[prob~13.6.8]{schubert}.

\begin{lem} \label{lem:exact}
Suppose given $\xymatrix@1{A \,\ar[r]^f &\, B\, \ar[r]^g & \,C}$ in $\sB$. Suppose that $c\colon B \to D$ is a cokernel of $f$, $k \colon K \to A$ is a kernel of $fg$ and $\ell \colon L \to B$ is a kernel of $g$.
\begin{align*}
\xymatrix{
K \arm[d]_k & L \arm[d]^{\ell} \\
A \ar[r]^f \ar[d]_{fg} & B \are[r]^*+<0.8mm,0.8mm>{\scriptstyle c} \ar[d]^g &D\\
C \ar[r]^{1_C} & C
}
\end{align*}
The induced morphism between the kernels shall be  $u\colon K \to L$.

The sequence $\xymatrix@1{K \,\ar[r]^u &\, L\, \ar[r]^{\ell c} & \,D}$ is exact.
\end{lem}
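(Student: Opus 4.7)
The plan is to reduce the statement to Lemma \ref{lem:schubert}, which is already available in the paper, by constructing a commutative $3 \times 3$ block whose column kernels are $K$, $L$ and $D$ and whose induced morphisms between the kernels are exactly $u$ and $\ell c$.

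Concretely, I would apply Lemma \ref{lem:schubert} to the commutative diagram
\begin{align*}
\xymatrix{
A \ar[r]^f \ar[d]_{fg} & B \are[r]^*+<0.8mm,0.8mm>{\scriptstyle c} \ar[d]^g & D \ar[d]^0 \\
C \ar[r]^{1_C} & C \ar[r]^0 & 0
}
\end{align*}
in $\sB$. The top row is right-exact by assumption on $c$, and the bottom row is trivially left-exact since $1_C$ is a kernel of $0 \colon C \to 0$, so the row-hypotheses of Lemma \ref{lem:schubert} are verified. Along the columns, $k$ is a kernel of $fg$ by hypothesis, $\ell$ is a kernel of $g$ by hypothesis, and $1_D$ is tautologically a kernel of $0 \colon D \to 0$.

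It remains to identify the induced morphisms between these kernels. The induced morphism $K \to L$ between the first two column-kernels is, by Lemma \ref{lem:inducedmorphisms}~(a), the unique morphism whose composite with $\ell$ equals $k f$; this is precisely $u$. The induced morphism $L \to D$ between the last two column-kernels is the unique morphism whose composite with $1_D$ equals $\ell c$, hence it equals $\ell c$. Lemma \ref{lem:schubert} then yields the exactness of $\xymatrix@1{K \,\ar[r]^{u} & \,L\, \ar[r]^{\ell c} & \,D}$, which is the claim.

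I do not anticipate a real obstacle: the only creative step is spotting the right pair of rows, after which the identification of the induced morphisms is immediate from their defining universal properties and Lemma \ref{lem:schubert} does all the remaining work.
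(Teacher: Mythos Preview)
Your proof is correct and follows the same strategy as the paper: reduce to Lemma~\ref{lem:schubert} by exhibiting a suitable $2\times 3$ diagram whose column kernels are $K$, $L$, $D$. The only difference is cosmetic: the paper first factors $g$ through its image $B \twoheadrightarrow I \hookrightarrow C$ and takes a cokernel $d\colon C \to E$ of $g$, using the bottom row $I \hookrightarrow C \twoheadrightarrow E$ (which forces the extra observation that $k$ is also a kernel of $fp$ since $i$ is monic), whereas you use the simpler bottom row $C \xrightarrow{1_C} C \xrightarrow{0} 0$. Your choice avoids the image factorization entirely and is a cleaner instantiation of the same idea.
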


\begin{proof}
We choose an image $\xymatrix@1{B \,\are[r]^*+<0.6mm,0.6mm>{\scriptstyle p} &\, I\, \arm[r]^i & \,C}$ of $g$. Let $d \colon C \to E$ be a cokernel of $g$. Note that $d$ is a cokernel of $i$ as well since $p$ is epimorphic.

Therefore we have the following commutative diagram in $\sB$ with $\xymatrix@1{A\,\ar[r]^f &\, B\, \are[r]^*+<0.8mm,0.8mm>{\scriptstyle c}&\,D}$ right-exact and $\xymatrix@1{I\, \arm[r]^i &\,C\, \are[r]^*+<0.8mm,0.8mm>{\scriptstyle d}&\,E}$ left-exact, cf. remark \ref{rem:kernelcokernelabeliancategory}~(b).
\begin{align*}
\xymatrix{
A \ar[r]^{f} \ar[d]_{fp}& B \are[r]^*+<0.8mm,0.8mm>{\scriptstyle c}  \ar[d]^{g}& D  \ar[d]^{0}\\
I \arm[r]^{i} & C \are[r]^*+<0.8mm,0.8mm>{\scriptstyle d} & E
}
\end{align*}
Note that $k$ is a kernel of $fp$ since $i$ is monomorphic and that $1 \colon D \to D$ is a kernel of $0 \colon D \to E$. The following diagram commutes.
\begin{align*}
\xymatrix{
K \arm[d]^k \ar[r]^u & L \arm[d]^\ell \ar[r]^{\ell c}& D \ar[d]^1\\
A \ar[r]^{f} \ar[d]_{fp}& B \are[r]^*+<0.8mm,0.8mm>{\scriptstyle c}  \ar[d]^{g}& D  \ar[d]^{0}\\
I \arm[r]^{i} & C \are[r]^*+<0.8mm,0.8mm>{\scriptstyle d} & E
}
\end{align*}
Lemma \ref{lem:schubert} says that the sequence $\xymatrix@1{K \,\ar[r]^u &\, L\, \ar[r]^{\ell c} &\, D}$ is exact.
\end{proof}

\begin{lem} \label{lem:epi}
Suppose given $\xymatrix@1{A\,\ar[r]^f&B\are[r]^*+<0.8mm,0.8mm>{\scriptstyle c}&\,C\,\are[r]^*+<0.8mm,0.8mm>{\scriptstyle d}&\,D}$ in $\sB$ with $c$ and $d$ epimorphisms. Suppose $cd$ to be a cokernel of $f$. 

The morphism $d$ is a cokernel of $fc$.
\end{lem}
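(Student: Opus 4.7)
The plan is to verify directly that $d$ satisfies the universal property of a cokernel of $fc$, using only the hypotheses that $c$ and $d$ are epimorphic and that $cd$ is a cokernel of $f$.

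First I would check that $(fc)d = 0$. This is immediate from associativity: $(fc)d = f(cd) = 0$, because $cd$ is a cokernel of $f$.

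Next I would establish the factorisation property. Suppose given $g \colon C \to X$ in $\sB$ with $(fc)g = 0$. Rewriting, $f(cg) = 0$, so by the universal property of the cokernel $cd$ of $f$ there exists a unique morphism $h \colon D \to X$ in $\sB$ satisfying $(cd)h = cg$, that is, $c(dh) = cg$. Since $c$ is epimorphic, this yields $dh = g$, providing the required factorisation. For uniqueness, if $h' \colon D \to X$ also satisfies $dh' = g = dh$, then epicness of $d$ gives $h' = h$.

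There is no serious obstacle here; the only point that requires care is distinguishing where epicness of $c$ is used (to cancel on the left after factoring through the cokernel $cd$) from where epicness of $d$ is used (to conclude uniqueness of the induced morphism). Note that uniqueness of $h$ with $(cd)h = cg$, coming from $cd$ being a cokernel, is a priori weaker than what we want, but combined with epicness of $d$ it gives exactly the uniqueness required for $d$ to be a cokernel of $fc$.
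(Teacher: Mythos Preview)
Your proof is correct and follows essentially the same approach as the paper's own proof: factor $g$ through the cokernel $cd$ of $f$, then cancel $c$ on the left to obtain $dh=g$, and use epicness of $d$ for uniqueness. Your version is slightly more explicit in that you also verify $(fc)d=0$ up front, which the paper leaves implicit.
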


\begin{proof}
Suppose given $g \colon C \to T$ in $\sB$ with $fcg=0$. There exists $u \colon D \to T$ with $cdu=cg$. Since $c$ is epimorphic, we conclude that $du=g$ holds. The induced morphism $u$ is unique since $d$ is an epimorphism.
\end{proof}

\begin{lemdefn} \label{lem:homology} 
 Suppose given tuples $\k=\big(\xymatrix@1{\K_A \,\ar[r]^{\k_A}& \,A_1}\big)_{A \in \Ob(\sB^{\Delta_2})}$,\\
 $\c=\big(\xymatrix@1{ A_1\, \ar[r]^{\c_A}& \,\C_A}\big)_{A \in \Ob(\sB^{\Delta_2})}$,
  $\p=\big(\xymatrix@1{ \K_A \,\ar[r]^{\p_A}&\, \Im_A}\big)_{A \in \Ob(\sB^{\Delta_2})}$ and $\i=\big(\xymatrix@1{ \Im_A \,\ar[r]^{\i_A}& \,\C_A}\big)_{A \in \Ob(\sB^{\Delta_2})}$ of morphisms in $\sB$ such that for $A \in \Ob(\sB^{\Delta_2})$, the morphism $\k_A$ is a kernel of $a_1$, the morphism $\c_A$ is a cokernel of $a_0$ and the diagram $\xymatrix@1{\K_A \,\ar[r]^-{\p_A} &\, \Im_A \ar[r]^-{\i_A}\, &\, \C_A}$ is an image of $\k_A \c_A$ in $\sB$.

\begin{align*}
\xymatrix{A_0 \ar[r]^{a_0} & A_1 \are[dr]_{\c_A} \ar[r]^{a_1} & A_2 \\ \K_A \are[dr]_{\p_A} \arm[ru]_{\k_A} & & \C_A\\&\Im_A \arm[ru]_{\i_A}}
\end{align*}
\begin{itemize}
	\item[(a)]
The functor $\tilde{\H}_\sB(\k,\c,\p,\i) \colon \sB^{\Delta_2} \to \sB$ shall be defined as follows. 

Let $(\tilde{\H}_\sB(\k,\c,\p,\i))(A):=\Im_A$ for $A \in \Ob(\sB^{\Delta_2})$.

Suppose given $\xymatrix@1{A\, \ar[r]^f&\,B}$ in $\sB^{\Delta_2}$. We have uniquely induced morphisms $f_{\K} \colon \K_A \to \K_B$, $f_{\C} \colon \C_A \to \C_B$ and $(\tilde{\H}_{\sB}(\k,\c,\p,\i))(f) \colon \Im_A \to \Im_B$ such that the following diagram commutes, cf. lemmata \ref{lem:inducedmorphisms} and  \ref{lem:inducedimage}.
\begin{align*}
\xymatrix@!{ && A_1 \ar[dddr]|(.57){\phantom{\rule{1mm}{10mm}}}_(.2){f_1}\are[drr]^{\c_A}  & 
\\
\K_A \ar[dddr]_{f_{\K}} \are[drr]_{\p_A} \arm[rru]^{\k_A}  & & & & \C_A \ar[dddr]_{f_{\C}}\\ &&\Im_A \ar[dddr]^(.7){(\tilde{\H}_{\sB}(\k,\c,\p,\i))(f)} \arm[rru]_{\i_A}\\
&&&B_1 \are[drr]^{\c_B} 
 \\ &\K_B \are[drr]_{\p_B} \arm[rru]|(.71){\phantom{\rule{2mm}{2mm}}}^{\k_B} &&  && \C_B\\
 &&&\Im_B \arm[rru]_{\i_B}
}
\end{align*}
Then $\tilde{\H}_{\sB}(\k,\c,\p,\i)$ is a well-defined additive functor. We have $(\tilde{\H}_{\sB}(\k,\c,\p,\i))(f) = 0$ for \\
$f \in \Mor (\sB^{\Delta_2})$ null-homotopic. 

\item[(b)] There exists a unique additive functor $\H_{\sB}(\k,\c,\p,\i) \colon \Adel(\sB) \to \sB$ such that \\
$\H_{\sB}(\k,\c,\p,\i) \circ \R_{\sB} =\tilde{\H}_{\sB}(\k,\c,\p,\i)$ holds, cf. theorem \ref{thrm:upf}.

The functor $\H_{\sB}(\k,\c,\p,\i)$ is left-exact. (In fact it is exact, cf. theorem \ref{thrm:Hexact}.)
\end{itemize}
\end{lemdefn}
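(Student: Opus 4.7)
The plan is to first build $\tilde{\H} := \tilde{\H}_\sB(\k,\c,\p,\i)$ slotwise from universal properties. For $f \colon A \to B$ in $\sB^{\Delta_2}$, commutativity yields $\k_A f_1 b_1 = \k_A a_1 f_2 = 0$, so Lemma~\ref{lem:inducedmorphisms}~(a) applied to the right-hand square produces a unique $f_\K \colon \K_A \to \K_B$ with $\k_A f_1 = f_\K \k_B$; dually, Lemma~\ref{lem:inducedmorphisms}~(b) yields a unique $f_\C \colon \C_A \to \C_B$ with $f_1 \c_B = \c_A f_\C$. The outer rectangle formed by $\k_A \c_A$, $f_\K$, $f_\C$, $\k_B \c_B$ commutes since $f_\K \k_B \c_B = \k_A f_1 \c_B = \k_A \c_A f_\C$, so Lemma~\ref{lem:inducedimage} delivers the unique $\tilde{\H}(f) \colon \Im_A \to \Im_B$ compatible with the chosen images. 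I would then deduce functoriality $\tilde{\H}(fg) = \tilde{\H}(f)\tilde{\H}(g)$, $\tilde{\H}(1_A) = 1_{\Im_A}$, and additivity $\tilde{\H}(f+f') = \tilde{\H}(f) + \tilde{\H}(f')$ from the uniqueness of all three induced morphisms, using repeatedly that $\p_A$ is epimorphic and $\i_B$ is monomorphic.

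For vanishing on null-homotopic morphisms, given $f_1 = s b_0 + a_1 t$ I would compute
\begin{align*}
\k_A f_1 \c_B = \k_A s b_0 \c_B + \k_A a_1 t \c_B = 0,
\end{align*}
using $b_0 \c_B = 0$ and $\k_A a_1 = 0$. On the other hand, chasing the defining squares gives $\k_A f_1 \c_B = f_\K \k_B \c_B = f_\K \p_B \i_B = \p_A \tilde{\H}(f) \i_B$, and since $\p_A$ is epimorphic and $\i_B$ is monomorphic, this forces $\tilde{\H}(f) = 0$.

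For part (b), Theorem~\ref{thrm:upf} immediately supplies the unique additive $\H := \H_\sB(\k,\c,\p,\i) \colon \Adel(\sB) \to \sB$ with $\H \circ \R_\sB = \tilde{\H}$, because $\tilde{\H}$ kills the ideal $\J_\sB$ of null-homotopic morphisms by the previous step. For left-exactness I would apply Lemma~\ref{lem:exactfunctor}~(a) to the explicit kernel $[\k(f)] \colon \K(f) \to A$ of $[f]$ from Theorem~\ref{thrm:kernelcokernel}~(a). Using Lemma~\ref{lem:h2}~(a), a cokernel of $\sm{a_0 & 0 \\ 0 & 1}$ is $\sm{\c_A \\ 0}$, so $\C_{\K(f)}$ identifies with $\C_A$; a kernel of $\sm{a_1 & f_1 \\ 0 & -b_0}$ is given by the pullback of $\k_A f_1 \colon \K_A \to B_1$ along $b_0 \colon B_0 \to B_1$, with its projection to $\K_A$ playing the role of the induced morphism $\k(f)_\K$. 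Transporting these descriptions through $\tilde{\H}$, I would identify the induced map $\H(\K(f)) \to \H(A)$ with the image inclusion built from this pullback into $\Im_A$, and then invoke Lemma~\ref{lem:schubert} (with Lemma~\ref{lem:exact} supplying the underlying three-term exactness around $\K_B \to \C_B$) to conclude $\H([\k(f)])$ is a kernel of $\H([f])$. The hard part will be this last diagram chase: three separate universal constructions (kernel, cokernel, image) interact in the definition of $\H$, and each must be matched against the chosen $\k,\c,\p,\i$ up to the canonical isomorphisms from Lemma~\ref{lem:iso} before Lemma~\ref{lem:schubert} can be applied to deliver exactness of $\H(\K(f)) \to \H(A) \to \H(B)$ at $\H(A)$.
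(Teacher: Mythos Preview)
Your treatment of part~(a) matches the paper's proof essentially verbatim: induced maps via Lemmata~\ref{lem:inducedmorphisms} and~\ref{lem:inducedimage}, functoriality and additivity from uniqueness, and the vanishing argument $\p_A \tilde{\H}(f)\i_B = \k_A f_1 \c_B = 0$.

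For part~(b) you are on the same track as the paper, but two steps are underspecified and one is slightly garbled. First, the paper does not invoke Lemma~\ref{lem:schubert} directly; it applies Lemma~\ref{lem:exact}, and the input for that lemma is the factorisation
\[
\sm{a_1&f_1\\0&-b_0} \;=\; \sm{1&f_1\\0&-b_0}\sm{a_1&0\\0&1},
\]
with $\sm{\k_A&0}$ a kernel of the right factor (Lemma~\ref{lem:h2}~(b)) and $\sm{f_1\c_B\\-\c_B}$ a cokernel of the left factor (Lemma~\ref{lem:h1}). Your pullback description of $\K_{\K(f)}$ is correct and equivalent, but without writing down this factorisation you cannot actually feed the situation into Lemma~\ref{lem:exact}. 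Second, Lemma~\ref{lem:exact} only gives exactness of $\K_{\K(f)} \xrightarrow{(\k(f))_\K} \K_A \xrightarrow{\k_A f_1 \c_B} \C_B$; passing from there to ``$\tilde{\H}(\k(f))$ is a kernel of $\tilde{\H}(f)$'' requires two further ingredients you do not mention: (i) $(\k(f))_\C$ is an isomorphism (both $\c_{\K(f)}$ and $\sm{\c_A\\0}$ are cokernels of $\sm{a_0&0\\0&1}$), which forces $\tilde{\H}(\k(f))$ to be monic; and (ii) Lemma~\ref{lem:epi}, which converts ``$\p_A p$ is a cokernel of $(\k(f))_\K$'' into ``$p$ is a cokernel of $(\k(f))_\K\p_A$'' once you have chosen an image $\Im_A \xrightarrow{p} I \xrightarrow{i} \Im_B$ of $\tilde{\H}(f)$. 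Your sketch acknowledges that this last chase is the hard part, but as written it does not contain the mechanism that closes it.
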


\begin{proof}
We abbreviate $\tilde \H_{\sB} :=\tilde \H_{\sB}(\k,\c,\p,\i)$ and $ \H_{\sB} := \H_{\sB}(\k,\c,\p,\i)$.

Ad (a).
Note that the induced morphisms between kernels, cokernels and images are unique. Therefore it is sufficient to check commutativity.

Suppose given $\xymatrix@1{A\, \ar@<0.4ex>[r]^{f}\ar@<-0.4ex>[r]_{h}&\,B\,\ar[r]^{g}&\,C}$ in $\sB^{\Delta_2}$.

We have $(fg)_{\K} =f_{\K} g_{\K}$ since $f_{\K} g_{\K} \cdot \k_C = f_{\K} \k_B g_1 = \k_A \cdot f_1 g_1$ holds.

We have $\tilde{\H}_{\sB}(fg) = \tilde{\H}_{\sB}(f)\tilde{\H}_{\sB}(g)$ since $\p_A \cdot  \tilde{\H}_{\sB}(f)\tilde{\H}_{\sB}(g) = f_{\K} \p_B \tilde{\H}_{\sB}(g) = f_{\K} g_{\K} \p_C = (fg)_{\K} \p_C = \p_A \cdot  \tilde{\H}_{\sB}(fg)$ holds and since $\p_A$ is epimorphic. 

We have $(1_A)_{\K} = 1_{\K_A}$, since $1_{\K_A} \k_A = \k_A = \k_A 1_{A_1}$ holds.

We have $\tilde{\H}_{\sB}(1_A) = 1_{\tilde{\H}_{\sB}(A)}$ since $\p_A 1_{\tilde{\H}_{\sB}(A)} = \p_A = 1_{\K_A} \p_A = \p_A \tilde{\H}_{\sB}(1_A) $ holds and since $\p_A$ is epimorphic. 

We have $(f+h)_{\K} = f_{\K} + h_{\K}$ since $(f_{\K} + h_{\K})\k_B = f_{\K}\k_B + h_{\K} \k_B = \k_A f_1 + \k_A h_1 = \k_A(f_1 + h_1)$ holds.

We have $\tilde{\H}_{\sB}(f+h) = \tilde{\H}_{\sB}(f) + \tilde{\H}_{\sB}(h)$ since $\p_A (\tilde{\H}_{\sB}(f) + \tilde{\H}_{\sB}(h)) = \p_A \tilde{\H}_{\sB}(f)+ \p_A \tilde{\H}_{\sB}(h) = f_{\K} \p_B + h_{\K}\p_B = (f_{\K}+h_{\K})\p_B = \p_A \tilde{\H}_{\sB}(f+h)$ holds and since $\p_A$ is epimorphic. 

So $\tilde{\H}_{\sB}$ is a well-defined additive functor.

Now suppose given $\xymatrix@1{A\, \ar[r]^f&\,B}$ in $\sB^{\Delta_2}$ with $f$ null-homotopic. There exist morphisms $s \colon A_1 \to B_0$ and $t \colon A_2 \to B_1$ satisfying $sb_0 + a_1t = f_1$.

We have 
\begin{align*}
\p_A \tilde{\H}_{\sB}(f) \i_B = f_{\K} \p_B \i_B = f_{\K} \k_B \c_B = \k_A f_1 \c_B = \k_A(s b_0 + a_1 t) \c_B = \k_A s\underbrace{ b_0 \c_B}_{=0} + \underbrace{\k_A a_1 }_{=0}t \c_B = 0.
\end{align*}
This implies $\tilde{\H}_{\sB}(f)=0$ since $\p_A$ is an epimorphism and since $\i_B$ is a monomorphism.

\bigskip

Ad (b). We want to show that $\H_{\sB}$ is left-exact.

Suppose given $\xymatrix@1{A \,\ar[r]^{[f]}&\,B}$ in $\Adel(\sB)$. By theorem \ref{thrm:kernelcokernel}, the morphism $[k(f)]$ is a kernel of $[f]$ in $\Adel(\sB)$. 
Lemma \ref{lem:exactfunctor} says that it is sufficient to show that $  \H_{\sB}([\k(f)])=\tilde{\H}_{\sB}(\k(f))$ is a kernel of $ \H_{\sB}([f])=\tilde{\H}_{\sB}(f)$.

The morphism $\c_{\K(f)} (\k(f))_{\C} = (\k(f))_1 \c_A = \sm{1\\0} \c_A = \sm{\c_A\\0}$ is a cokernel of
 $(\K(f))(\xymatrix@1@C=4mm{0\ar[r]&1})=\sm{a_0&0\\0&1} \colon A_0\oplus B_0 \to A_1 \oplus B_0$, cf. lemma \ref{lem:h2}~(a).\\
  The morphism $\c_{\K(f)}$ is a cokernel of $(\K(f))(\xymatrix@1@C=4mm{0\ar[r]&1})=\sm{a_0&0\\0&1}$ by definition.

Therefore $(\k(f))_{\C}$ is an isomorphism, so $\tilde{\H}_{\sB}(\k(f)) \i_A = \i_{\K(f)} (\k(f))_{\C}$ is a monomorphism. 

This implies that $\tilde{\H}_{\sB}(\k(f))$ is a monomorphism and that, consequently, the diagram \\
$\xymatrix@1@C=14mm{\K_{\K(f)}\, \arefl{.42}[r]^-*+<0.3mm,0.3mm>{\scriptstyle \p_{\K(f)}} &\, \tilde{\H}_{\sB}(\K(f)) \,\armfl{.55}[r]^-{\tilde{\H}_{\sB}(\k(f))} &\,\tilde{\H}_{\sB}(A)}$ is an image of $\p_{\K(f)} \tilde{\H}_{\sB}(\k(f)) = (\k(f))_{\K} \p_A$.

\bigskip

We choose an image $\xymatrix@1{\tilde{\H}_{\sB}(A) \,\arefl{.62}[r]^-*+<0.6mm,0.6mm>{\scriptstyle p} &\, I\, \armfl{.36}[r]^-i & \,\tilde{\H}_{\sB}(B)}$ of the morphism $\tilde{\H}_{\sB}(f)$.

Then $\xymatrix@1{\K_A\, \are[r]^*+<0.6mm,0.6mm>{\scriptstyle \p_A p} & \,I\, \armfl{.45}[r]^{i \i_B}&\, \C_B}$ is an image of $\p_A p i \i_B = \p_A \tilde{\H}_{\sB}(f) \i_B = f_{\K} \p_B \i_B  =f_{\K} \k_B \c_B = \k_A f_1 \c_B$.

Consider the morphisms $\sm{1&f_1 \\ 0&-b_0} \colon A_1 \oplus B_0 \to A_1 \oplus B_1$ and $\sm{a_1&0\\0&1} \colon A_1 \oplus B_1 \to A_2 \oplus B_1$. 

A cokernel of $\sm{1&f_1 \\ 0&-b_0}$ is given by $\sm{f_1 \c_B \\ - \c_B} \colon A_1 \oplus B_1 \to \C_B$ since $\c_B$ is a cokernel of $b_0$, cf. lemma~\ref{lem:h1}.

A kernel of $\sm{a_1&0\\0&1}$ is given by $\sm{\k_A &0}\colon \K_A \to A_1 \oplus B_1$, cf. lemma~\ref{lem:h2}~(b). 

A kernel of $(\K(f))(\xymatrix@1@C=4mm{1\ar[r]&2})=\sm{a_1 & f_1\\0&-b_0} = \sm{1&f_1 \\ 0&-b_0}\sm{a_1&0\\0&1}$ is given by \\ $\sm{x&y}:=\k_{\K(f)}  \colon \K_{\K(f)} \to A_1 \oplus B_0$. Consequently, we have $xf_1-yb_0=0$. 
\begin{align*}
\xymatrix{
\K_{\K(f)}\ar[dd]_{\sm{x&y}} & &\K_A\ar[dd]^{\sm{\k_A & 0}}\\ \\
A_1 \oplus B_0 \ar[rr]^{\sm{1&f_1\\0&-b_0}} \ar[dd]_{\sm{a_1&f_1\\0&-b_0}}&& A_1 \oplus B_1 \ar[dd]^{\sm{a_1&0\\0&1}} \ar[rr]^{\sm{f_1\c_B\\-\c_B}}&& \C_B\\ \\
A_2 \oplus B_1 \ar[rr]^{1_{A_2\mathord \oplus B_1}}&& A_2 \oplus B_1
}
\end{align*}
The induced morphism between the kernels is given by $(\k(f))_{\K}$, since we have 
\begin{align*}
(\k(f))_{\K}\k_A = \k_{\K(f)} (\k(f))_1 = \sm{x& y }\sm{1\\0}=x
\end{align*}
and therefore
\begin{align*}
\sm{x&y}\sm{1&f_1\\0&-b_0} = \sm{x&\ xf_1-yb_0} = \sm{(\k(f))_{\K} \k_A &\ 0} = (\k(f))_{\K} \sm{\k_A & 0}.
\end{align*}

Lemma \ref{lem:exact} now implies that the sequence $\xymatrix@1@C=14mm{\K_{\K(f)}\, \ar[r]^{(\k(f))_{\K}}&\,\K_A \,\ar[r]^{\k_A f_1 \c_B}&\,\C_B}$ is exact since \\
$\sm{\k_A &0} \sm{f_1 \c_B\\-\c_B} = \k_A f_1 \c_B$ holds.

We conclude that $\p_A p$ is a cokernel of $(\k(f))_{\K}$, cf. remark \ref{rem:exactsequence}. Lemma \ref{lem:epi} says that $p$ is a cokernel of $(\k(f))_{\K} \p_A$ because $\p_A$ and $p$ are epimorphic.

Therefore the sequence $\xymatrix@1@C=18mm{\K_{\K(f)} \,\ar[r]^-{(\k(f))_{\K} \p_A}&\,\tilde{\H}_{\sB}(A)\, \ar[r]^-{\tilde{\H}_{\sB}(f)}&\,\tilde{\H}_{\sB}(B)}$ is exact, so $\tilde{\H}_{\sB}(\k(f))$ is a kernel of $\tilde{\H}_{\sB}(f)$.
\begin{align*}
\xymatrix{ && A_1 \oplus B_0 \ar[ddddrr]|(.42){\phantom{\rule{2mm}{2mm}}}^(.7){\sm{1\\0}}\are[drr]^{\c_{\K(f)}}  & 
\\
\K_{\K(f)} \ar[ddddrr]_{(\k(f))_{\K}} \are[drr]^{\p_{\K(f)}} \arm[rru]^{\k_{\K(f)}}  & & & & \C_{\K(f)} \ar[ddddrr]^{(\k(f))_{\C}}
\\
 &&\tilde{\H}_{\sB}(\K(f)) \armfl{.3}[ddddrr]_(.3){\tilde{\H}_{\sB}(\k(f))} \armfl{.71}[rru]_(.71){\i_{\K(f)}}\\
\\
&&&& A_1 \ar[ddddrr]|(.42){\phantom{\rule{2mm}{2mm}}}^(.67){f_1} \are[drr]^{\c_A}  & 
\\
&&\K_A \ar[ddddrr]_{f_{\K}} \are[drr]_{\p_A} \armfl{.35}[rru]|(.575){\phantom{\rule{2mm}{2mm}}}^(.35){\k_A}  & & & & \C_A \ar[ddddrr]^{f_{\C}}\\
&&&&\tilde{\H}_{\sB}(A) \are[ddr]_{p} \armfl{.7}[rru]_(.7){\i_A}\\
\\
&&&&&I\arm[ddr]^{i}&B_1 \are[drr]^{\c_B} \\
&&& &\K_B \are[drr]_{\p_B} \armfl{.35}[rru]|(.595){\phantom{\rule{2mm}{2mm}}}^(.35){\k_B} &&  && \C_B\\
&&&&&&\tilde{\H}_{\sB}(B) \arm[rru]_{\i_B}
}
\end{align*}
\end{proof}

\begin{rem}
To show that null-homotopic morphisms are sent to $0$ by $\tilde{\H}_{\sB}(\k,\c,\p,\i)$ in the previous lemma \ref{lem:homology}, one may also use remark \ref{rem:adelideal} and check $(\tilde{\H}_{\sB}(\k,\c,\p,\i))(S)=0$ for $S \in \oS_{\sB}$.
\end{rem}

\begin{lemdefn}[Homology functor] \label{defn:H}
Let $\nZ_{\sB} \subseteq \Ob \sB$ be the set of zero objects in $\sB$.

Suppose given $A \in \Ob(\sB^{\Delta_2})$.

We choose a kernel $\k_A \colon \K_A \to A_1$ of $a_1$. In case $A_2 \in \nZ_{\sB}$, we choose $ \k_{A}:= 1_{A_1}$.

We choose a cokernel $\c_A \colon A_1 \to \C_A$ of $a_0$. In case $A_0 \in \nZ_{\sB}$, we choose $ \c_{A}:= 1_{A_1}$. 

We choose an image $\xymatrix@1{\K_A \,\ar[r]^-{\p_A}&\, \Im_A \,\ar[r]^-{\i_A} & \,\C_A}$ of $\k_A \c_A$.\\
 In case $A_0,A_2 \in \nZ_{\sB}$, we choose $\big(\xymatrix@1{\K_A\, \ar[r]^-{\p_A}& \,\Im_A\, \ar[r]^-{\i_A} & \,\C_A}\big):= \big( \xymatrix{A_1\,\ar[r]^{1}& \,A_1\,  \ar[r]^{1}& \,A_1 } \big)$.

By applying lemma \ref{lem:homology} with the tuples 
$\k:=\big(\xymatrix@1{\K_A \,\ar[r]^{\k_A}& \,A_1}\big)_{A \in \Ob(\sB^{\Delta_2})}$, \\
$\c:=\big(\xymatrix@1{ A_1\, \ar[r]^{\c_A}& \,\C_A}\big)_{A \in \Ob(\sB^{\Delta_2})}$, 
$\p:=\big(\xymatrix@1{ \K_A \,\ar[r]^{\p_A}& \,\Im_A}\big)_{A \in \Ob(\sB^{\Delta_2})}$ and \\
$\i:=\big(\xymatrix@1{ \Im_A \,\ar[r]^{\i_A}&\, \C_A}\big)_{A \in \Ob(\sB^{\Delta_2})}$ of morphisms in $\sB$,
 we obtain functors\\ $\tilde{\H}_\sB:=\tilde{\H}_\sB(\k,\c,\p,\i) \colon \sB^{\Delta_2} \to \sB$ and $\H_{\sB} :=\H_{\sB}(\k,\c,\p,\i) \colon \Adel(\sB) \to \sB$. 
 We call $\H_{\sB}$ the \emph{homology functor} of $\sB$.

For $\xymatrix@1{A \,\ar[r]^f&\,B}$ in $\sB^{\Delta_2}$ with $A_0,A_2,B_0,B_2 \in \nZ_{\sB}$, we have \begin{align*}
\H_{\sB}(A)=\tilde{\H}_\sB (A)=A_1 \qquad \text{ and } \qquad \H_{\sB}([f])=\tilde{\H}_\sB(f)=f_1.
\end{align*}
In particular, we have $\tilde{\H}_{\sB} \circ \tilde{\I}_{\sB} = 1_{\sB}$ and $\H_{\sB} \circ \I_{\sB} = 1_{\sB}$.
\begin{align*}
\xymatrix{
\sB \ar[rr]^{\tilde{\I}_{\sB}} \ar[drdr]_{\I_{\sB}}&& \sB^{\Delta_2} \ar[dd]^{\R} \ar[drdr]^{\tilde{\H}_{\sB}} \\ \\
&&\Adel(\sB) \ar[rr]_-{\H_{\sB}} && \sB
}
\end{align*}
Recall that $\D_{\sB} \colon \Adel(\sB)^{\op} \to \Adel(\sB^{\op})$ is an isomorphism of categories, cf. definition \ref{defn:duality}.

%By applying lemma \ref{lem:homology} with the tuples $\c^{\circ}=\big(\xymatrix@1{ \C_A \ar[r]^{(\c_A)^{\op}}& A_1}\big)_{\D_{\sB}(A) \in \Ob\left((\sB^{\op})^{\Delta_2}\right)}$,\\
%$\k^{\circ}=\big(\xymatrix@1{A_1 \ar[r]^{(\k_A)^{\op}}& \K_A}\big)_{\D_{\sB}(A) \in \Ob\left((\sB^{\op})^{\Delta_2}\right)}$,  
%$\i^{\circ}=\big(\xymatrix@1{\C_A \ar[r]^{(\i_A)^{\op}}& \Im_A }\big)_{\D_{\sB}(A) \in \Ob\left((\sB^{\op})^{\Delta_2}\right)}$
%and\\
%$\p^{\circ}=\big(\xymatrix@1{\Im_A \ar[r]^{(\p_A)^{\op}}& \K_A }\big)_{\D_{\sB}(A) \in \Ob\left((\sB^{\op})^{\Delta_2}\right)}$ of morphisms in $\sB^{\op}$,
% we obtain functors \\
% $\tilde{\H}_\sB^\circ := \tilde{\H}_{\sB^{\op}}(\c^{\circ},\k^{\circ},\i^{\circ},\p^{\circ})\colon (\sB^{\op})^{\Delta_2} \to \sB$ and $\H_{\sB}^\circ := \H_{\sB^{\op}}(\c^{\circ},\k^{\circ},\i^{\circ},\p^{\circ})\colon \Adel(\sB^{\op}) \to \sB$.
 
 By applying lemma \ref{lem:homology} with the tuples 
 \begin{align*}
 &\c^{\circ}:=\Bigg(\xymatrix@C=20mm{ \C_{\D_{\sB}^{-1}(A)} \ar[r]^-{\big(\c_{\D_{\sB}^{-1}(A)}\big)^{\op}}& A_1}\Bigg)_{A \in \Ob\left((\sB^{\op})^{\Delta_2}\right)}, \\
&\k^{\circ}:=\Bigg(\xymatrix@C=20mm{A_1 \ar[r]^{\big(\k_{\D_{\sB}^{-1}(A)}\big)^{\op}}& \K_{\D_{\sB}^{-1}(A)}}\Bigg)_{A \in \Ob\left((\sB^{\op})^{\Delta_2}\right)},
 \\
& \i^{\circ}:=\Bigg(\xymatrix@C=16mm{\C_{\D_{\sB}^{-1}(A)} \ar[r]^{\big(\i_{\D_{\sB}^{-1}(A)}\big)^{\op}}& \Im_{\D_{\sB}^{-1}(A)} }\Bigg)_{A \in \Ob\left((\sB^{\op})^{\Delta_2}\right)}
 && \text{and }
 \\ 
 &\p^{\circ}:=\Bigg(\xymatrix@C=16mm{\Im_{\D_{\sB}^{-1}(A)} \ar[r]^{\big(\p_{\D_{\sB}^{-1}(A)}\big)^{\op}}& \K_{\D_{\sB}^{-1}(A)} }\Bigg)_{A \in \Ob\left((\sB^{\op})^{\Delta_2}\right)}
 \end{align*}
of morphisms in $\sB^{\op}$,
 we obtain functors \\
 $\tilde{\H}_\sB^\circ := \tilde{\H}_{\sB^{\op}}(\c^{\circ},\k^{\circ},\i^{\circ},\p^{\circ})\colon (\sB^{\op})^{\Delta_2} \to \sB$ and $\H_{\sB}^\circ := \H_{\sB^{\op}}(\c^{\circ},\k^{\circ},\i^{\circ},\p^{\circ})\colon \Adel(\sB^{\op}) \to \sB$.

 The equation $(\H_{\sB})^{\op} = \H_{\sB}^{\circ} \circ \D_{\sB}$ holds.
 \begin{align*}
 \xymatrix{
 \Adel(\sB)^{\op} \ar[dd]_{\sD_{\sB}} \ar[rr]^-{(\H_{\sB})^{\op}} && \sB^{\op} \\
 \\
 \Adel(\sB^{\op}) \ar[urur]_{\H_{\sB}^\circ}
 }
 \end{align*}
\end{lemdefn}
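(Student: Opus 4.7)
First, I would verify that both invocations of lemma \ref{lem:homology} are legitimate. The prescribed choices of $\k_A$, $\c_A$, $(\p_A,\i_A)$ satisfy the required properties; the degenerate cases ($A_0$ or $A_2$ a zero object) work because any morphism with zero target (resp.\ source) admits $1$ as a kernel (resp.\ cokernel), and when both endpoints are zero objects, $(1,1,1)$ is trivially an image factorisation. For the dualised tuples $(\c^{\circ},\k^{\circ},\i^{\circ},\p^{\circ})$, the opposites of kernels, cokernels, and images in $\sB$ are kernels, cokernels, and images respectively in $\sB^{\op}$ by remark \ref{rem:dual}, so lemma \ref{lem:homology} applies in $\sB^{\op}$ to give $\tilde{\H}_{\sB}^{\circ}$ and $\H_{\sB}^{\circ}$.

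The equations $\tilde{\H}_{\sB}\circ\tilde{\I}_{\sB}=1_{\sB}$ and $\H_{\sB}\circ\I_{\sB}=1_{\sB}$ are immediate from the degenerate-case conventions: for $S\in\Ob\sB$ the diagram $\tilde{\I}_{\sB}(S)=\bigl(\xymatrix@1{0\ar[r]^0&S\ar[r]^0&0}\bigr)$ has $A_0,A_2\in\nZ_{\sB}$, so the choices force $\K,\C,\Im$ to equal $S$ with structure morphisms identities; on a morphism $f$ the induced morphisms between the kernels, cokernels, and images all reduce to $f$, and taking residue classes gives $\H_{\sB}\circ\I_{\sB}=1_{\sB}$.

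The substantive claim is $(\H_{\sB})^{\op}=\H_{\sB}^{\circ}\circ\D_{\sB}$ as functors $\Adel(\sB)^{\op}\to\sB^{\op}$. On an object $A\in\Ob(\Adel(\sB))$ the two sides agree: $(\H_{\sB})^{\op}(A)=\Im_A$, while $\H_{\sB}^{\circ}(\D_{\sB}(A))=\H_{\sB}^{\circ}\bigl(\xymatrix@1{A_2\ar[r]^{a_1^{\op}}&A_1\ar[r]^{a_0^{\op}}&A_0}\bigr)=\Im_A$ because the tuples $\c^{\circ},\k^{\circ},\i^{\circ},\p^{\circ}$ evaluated at $\D_{\sB}(A)$ are by design $\c_A^{\op},\k_A^{\op},\i_A^{\op},\p_A^{\op}$. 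For a morphism $[f]^{\op}\colon B\to A$ in $\Adel(\sB)^{\op}$, the two values are $\H_{\sB}([f])^{\op}$ on the left and $\H_{\sB}^{\circ}([f_2^{\op},f_1^{\op},f_0^{\op}])$ on the right, so the task reduces to the identity $\tilde{\H}_{\sB}^{\circ}\bigl((f_2^{\op},f_1^{\op},f_0^{\op})\bigr)=\tilde{\H}_{\sB}(f)^{\op}$ at the level of representatives in $\sB^{\Delta_2}$.

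For this remaining morphism identity I would invoke the uniqueness of induced morphisms. The induced morphism $u$ between the kernels in $\sB^{\op}$ (namely between $\c_A^{\op}$ and $\c_B^{\op}$) is characterised by $\c_B^{\op}f_1^{\op}=u\,\c_A^{\op}$; dualising gives $f_1\c_B=\c_A u^{\op}$, and the uniqueness in lemma \ref{lem:inducedmorphisms} identifies $u^{\op}$ with the induced morphism $f_{\C}$ between the cokernels in $\sB$. Dually, the induced morphism between the cokernels in $\sB^{\op}$ dualises to $f_{\K}$. Lemma \ref{lem:inducedimage} then yields that the induced morphism between the images in $\sB^{\op}$ dualises to $\tilde{\H}_{\sB}(f)$, completing the verification. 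The main obstacle is purely notational: distinguishing which induced morphism lives in which category and correctly matching the dualised tuples in $\sB^{\op}$ with the original choices in $\sB$; once this bookkeeping is in place, the uniqueness lemmas deliver the result formally.
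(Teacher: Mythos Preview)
Your proposal is correct and follows essentially the same approach as the paper: verify agreement on objects via the definition of the dualised tuples, then reduce the morphism case to identifying the induced morphisms in $\sB^{\op}$ with the opposites of those in $\sB$ by uniqueness. The only minor difference is that the paper verifies $\tilde{\H}_{\sB}^{\circ}((f_2^{\op},f_1^{\op},f_0^{\op}))=\tilde{\H}_{\sB}(f)^{\op}$ by directly checking the single defining equation $\i_B^{\op}\,\tilde{\H}_{\sB}(f)^{\op}=(f_2^{\op},f_1^{\op},f_0^{\op})_{\K}\,\i_A^{\op}$ and using that $\i_B^{\op}$ is epimorphic, whereas you identify both dualised induced morphisms and then invoke lemma~\ref{lem:inducedimage}; these are equivalent routes.
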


\begin{proof}
We want to show that $(\H_{\sB})^{\op} = \H_{\sB}^\circ \circ \D_{\sB}$ holds.

Suppose given $\xymatrix@1{A\, \ar[r]^f&\,B}$ in $\sB^{\Delta_2}$. Consider $(f_2^{\op},f_1^{\op},f_0^{\op}) \colon \D_{\sB}(B) \to \D_{\sB}(A)$ in $(\sB^{\op})^{\Delta_2}$.

We have $(f_2^{\op},f_1^{\op},f_0^{\op})_{\K} = f_{\C}^{\op}$ since $f_{\C}^{\op}\c_{A}^{\op} = (\c_Af_{\C})^{\op} = (f_1 \c_B)^{\op} = \c_{B}^{\op} f_1^{\op}$ holds.

%We have $(f_2^{\op},f_1^{\op},f_0^{\op})_{\C} = f_{\K}^{\op}$  since $\k_B^{\op} f_{\K}^{\op} = (f_{\K}\k_B)^{\op} = (\k_A f_1)^{\op} = f_1^{\op} \k_A^{\op}$ holds.

We have ${\tilde \H}_{\sB}^{\circ}((f_2^{\op},f_1^{\op},f_0^{\op}))= \tilde \H_{\sB}(f)^{\op}$ since \begin{align*}
\i_B^{\op} \tilde \H_{\sB}(f)^{\op} = (\tilde \H_{\sB}(f) \i_B)^{\op}= (\i_A f_{\C})^{\op} = f_{\C}^{\op} \i_A^{\op} = (f_2^{\op},f_1^{\op},f_0^{\op})_{\K} \i_A^{\op} = \i_B^{\op} {\tilde \H}_{\sB}^{\circ}((f_2^{\op},f_1^{\op},f_0^{\op}))
\end{align*}
 holds and since $\i_{B}^{\op}$ is epimorphic.

We conclude that
\begin{align*}
(\H_{\sB}^\circ \circ \D_{\sB})(A) = \H_{\sB}^\circ ( \D_{\sB}(A)) = \Im_A = (\H_{\sB})^{\op}(A)
\end{align*}
and that
\begin{align*}
(\H_{\sB}^\circ \circ \D_{\sB})([f]^{\op}) &=   \H_{\sB}^\circ ([f_2^{\op},f_1^{\op},f_0^{\op}]) = {\tilde \H}_{\sB}^{\circ} ((f_2^{\op},f_1^{\op},f_0^{\op})) = \tilde \H_{\sB}(f)^{\op}= \H_{\sB}([f])^{\op} \\&=(\H_{\sB})^{\op}([f]^{\op}).
\end{align*}
\end{proof}

\begin{thrm} \label{thrm:Hexact}
Recall that $\sB$ is an abelian category.

The functor $\H_\sB \colon \Adel(\sB) \to \sB$ is an exact functor.
\end{thrm}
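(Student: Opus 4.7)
The plan is to combine the left-exactness already supplied by \hyperref[lem:homology]{lemma \ref{lem:homology}~(b)} with a duality argument to upgrade it to full exactness. So the only thing left to prove is that $\H_{\sB}$ is right-exact.

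First, I note that $\H_{\sB} = \H_{\sB}(\k,\c,\p,\i)$ is obtained by an application of \hyperref[lem:homology]{lemma \ref{lem:homology}~(b)} to the abelian category $\sB$ and therefore is left-exact. Dually, the functor $\H_{\sB}^{\circ} = \H_{\sB^{\op}}(\c^{\circ},\k^{\circ},\i^{\circ},\p^{\circ}) \colon \Adel(\sB^{\op}) \to \sB^{\op}$ is constructed by applying the \emph{same} lemma, this time to the abelian category $\sB^{\op}$ (cf.\ remark \ref{rem:dual}~(f)). Hence $\H_{\sB}^{\circ}$ is left-exact as well.

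Next, I would invoke the identity $(\H_{\sB})^{\op} = \H_{\sB}^{\circ} \circ \D_{\sB}$ established in \hyperref[defn:H]{definition} \ref{defn:H}. Since $\D_{\sB} \colon \Adel(\sB)^{\op} \to \Adel(\sB^{\op})$ is an isomorphism of categories (cf.\ definition \ref{defn:duality}), it is in particular exact, and hence left-exact. The composite of two left-exact additive functors is again left-exact, so $(\H_{\sB})^{\op}$ is left-exact. By remark \ref{rem:dual}~(g), this is equivalent to $\H_{\sB}$ being right-exact.

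Since $\H_{\sB}$ is both left-exact and right-exact, it is exact, which is what was to be shown. There is essentially no real obstacle here: all the technical work has already been invested into lemma \ref{lem:homology} and into setting up the duality isomorphism $\D_{\sB}$; the proof of theorem \ref{thrm:Hexact} is a short bookkeeping argument that uses duality to halve the work.
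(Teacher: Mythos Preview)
Your proposal is correct and follows essentially the same route as the paper: invoke the left-exactness of $\H_{\sB}$ and of $\H_{\sB}^{\circ}$ from lemma~\ref{lem:homology}~(b), use the identity $(\H_{\sB})^{\op} = \H_{\sB}^{\circ} \circ \D_{\sB}$ from definition~\ref{defn:H} together with the fact that $\D_{\sB}$ is an isomorphism, and conclude right-exactness of $\H_{\sB}$ by duality.
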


\begin{proof}
We have left-exact functors $\H_{\sB} \colon \Adel(\sB) \to \sB$ and $\H_{\sB}^{\circ} \colon \Adel(\sB^{\op}) \to \sB^{\op}$, cf. definition \ref{defn:H} and lemma  \ref{lem:homology}.
 The functor $(\H_{\sB})^{\op} = \H_{\sB}^{\circ} \circ \D_{\sB} \colon \Adel(\sB)^{\op} \to \sB^{\op}$ is also left-exact since $\D_{\sB}$ is an isomorphism of categories. We conclude that $\H_{\sB}$ is exact.
\end{proof}

\section{The universal property}
\label{sec:universalproperty}
\begin{prop} \label{prop:transformationsubcategories}
Suppose given an abelian category $\sA$ and full subcategories $\sC$ and $\sD$ satisfying $\sC \subseteq \sD \subseteq \sA$. The embedding functor from $\sC$ to $\sD$ shall be denoted by $E \colon \sC \to \sD$.

Suppose given an additive category $\sB$ and additive functors $F,G \colon \sA \to \sB$.

\begin{itemize}
\item[(a)] Suppose that $F$ and $G$ are left-exact.

Suppose that for $D \in \Ob \sD$, there exists a left-exact sequence $\xymatrix@1{D \,\arm[r]^d &\, C\, \ar[r]^c \ar[r] &\, C'}$ in $\sA$ with $C,C' \in \Ob \sC$.

Suppose that the objects in $\sC$ are injective in $\sA$.

Then we have bijections
\begin{align*}
\gamma \colon  \Hom_{\sB^{\sD}}(F|_{\sD}, G|_{\sD}) \to & \Hom_{\sB^{\sC}}(F|_{\sC},G|_{\sC}) \colon
 \tau   \mapsto  \tau \star E
\end{align*}
and
\begin{align*}
\gamma' \colon  \Hom^{\iso}_{\sB^{\sD}}(F|_{\sD}, G|_{\sD}) \to & \Hom^{\iso}_{\sB^{\sC}}(F|_{\sC},G|_{\sC}) \colon
 \tau  \mapsto  \tau \star E.
\end{align*}
\item[(b)] Suppose that $F$ and $G$ are right-exact.

Suppose that for $D \in \Ob \sD$, there exists a right-exact sequence $\xymatrix@1{C\, \ar[r]^c & \,C'\, \are[r]^*+<0.8mm,0.8mm>{\scriptstyle d} & \,D}$ in $\sA$ with $C,C' \in \Ob \sC$.

Suppose that the objects in $\sC$ are projective in $\sA$.

Then we have bijections
\begin{align*}
 \Hom_{\sB^{\sD}}(F|_{\sD}, G|_{\sD}) \to & \Hom_{\sB^{\sC}}(F|_{\sC},G|_{\sC}) \colon \tau \mapsto \tau \star E
\end{align*}
and
\begin{align*}
 \Hom^{\iso}_{\sB^{\sD}}(F|_{\sD}, G|_{\sD}) \to & \Hom^{\iso}_{\sB^{\sC}}(F|_{\sC},G|_{\sC}) \colon
 \tau  \mapsto\tau \star E.
\end{align*}
\end{itemize}
\end{prop}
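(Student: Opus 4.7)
The plan is to prove (a) in detail and deduce (b) by the standard duality, applying (a) to $\sA^{\op}, \sB^{\op}$ with the functors $F^{\op}, G^{\op}$ (which are left-exact iff $F, G$ are right-exact) and $\sC^{\op} \subseteq \sD^{\op}$, where injectivity in $\sA^{\op}$ is projectivity in $\sA$, cf. remark \ref{rem:dual}. The engine of the proof of (a) is the kernel universal property. For every $D \in \Ob \sD$ I fix a left-exact sequence $D \xrightarrow{d_D} C_D \xrightarrow{c_D} C'_D$ with $C_D, C'_D \in \Ob \sC$; since $F, G$ are left-exact, $F(d_D)$ and $G(d_D)$ are kernels of $F(c_D)$ and $G(c_D)$ respectively, so in particular they are monomorphisms. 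Injectivity of $\sC$-objects enters only once.

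First, for injectivity of $\gamma$: given $\tau \colon F|_\sD \Rightarrow G|_\sD$, naturality at $d_D$ reads $F(d_D) \tau_{C_D} = \tau_D G(d_D)$, and since $G(d_D)$ is monomorphic, $\tau_D$ is completely determined by $\tau_{C_D} = (\tau \star E)_{C_D}$. For surjectivity, given $\sigma \colon F|_\sC \Rightarrow G|_\sC$, the morphism $F(d_D) \sigma_{C_D} \colon F(D) \to G(C_D)$ satisfies $F(d_D)\sigma_{C_D} G(c_D) = F(d_D c_D) \sigma_{C'_D} = 0$ by naturality of $\sigma$ at $c_D$ and $d_D c_D = 0$, so the kernel property of $G(d_D)$ produces a unique $\tau_D \colon F(D) \to G(D)$ with $\tau_D G(d_D) = F(d_D) \sigma_{C_D}$. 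When $D \in \Ob\sC$, naturality of $\sigma$ at $d_D$ gives $F(d_D)\sigma_{C_D} = \sigma_D G(d_D)$, so cancellation of the mono $G(d_D)$ forces $\tau_D = \sigma_D$; hence the extension recovers $\sigma$ on $\sC$.

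The hard part will be naturality of the family $(\tau_D)_{D \in \Ob \sD}$, which is precisely where the injectivity hypothesis is used. For $f \colon D_1 \to D_2$ in $\sD$, the monomorphism $d_{D_1}$ and injectivity of $C_{D_2} \in \Ob\sC$ in $\sA$ allow a lift $g \colon C_{D_1} \to C_{D_2}$ with $d_{D_1} g = f d_{D_2}$. Then
\begin{align*}
\tau_{D_1} G(f) G(d_{D_2}) &= \tau_{D_1} G(d_{D_1}) G(g) = F(d_{D_1}) \sigma_{C_{D_1}} G(g) \\
&= F(d_{D_1}) F(g) \sigma_{C_{D_2}} = F(f) F(d_{D_2}) \sigma_{C_{D_2}} = F(f) \tau_{D_2} G(d_{D_2}),
\end{align*}
using naturality of $\sigma$ at $g$ together with the defining equations for $\tau_{D_1}$ and $\tau_{D_2}$. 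Cancelling the mono $G(d_{D_2})$ gives $\tau_{D_1} G(f) = F(f) \tau_{D_2}$.

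Finally, for the bijection $\gamma'$ on isotransformations: the equation $\tau_D G(d_D) = F(d_D) \sigma_{C_D}$ identifies $\tau_D$ as the induced morphism between the kernels $F(d_D)$ and $G(d_D)$ in the commutative square obtained from $c_D$ and the components $\sigma_{C_D}, \sigma_{C'_D}$, so lemma \ref{lem:inducedmorphisms}~(a) shows that if $\sigma_{C_D}$ and $\sigma_{C'_D}$ are isomorphisms then $\tau_D$ is too. The reverse direction is immediate since $(\tau \star E)_C = \tau_C$. Hence $\gamma'$ is a well-defined bijection, and this completes (a).
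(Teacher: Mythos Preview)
Your proof is correct and follows essentially the same route as the paper's: injectivity via cancellation of the monomorphism $G(d_D)$, surjectivity by constructing $\tau_D$ as the induced morphism between kernels, naturality via an injective lift $g$, and the isomorphism case via lemma~\ref{lem:inducedmorphisms}~(a). The only cosmetic difference is that the paper, when $D \in \Ob\sC$, \emph{chooses} the resolution $D \xrightarrow{1} D \xrightarrow{0} D$ so that $\tau_D = \sigma_D$ holds trivially, whereas you keep an arbitrary resolution and deduce $\tau_D = \sigma_D$ from naturality of $\sigma$ at $d_D$; both work equally well.
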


\begin{proof}
Ad (a). The map $\gamma$ is well-defined, cf. convention \ref{conv:transformation}. Note that $\tau \gamma = (\tau_C)_{C \in \Ob \sC}$ holds for $\tau \in \Hom_{\sB^{\sD}}(F|_{\sD}, G|_{\sD})$.

The map $\gamma'$ is well-defined since $\tau_C$ is an isomorphism for $\tau \in \Hom^{\iso}_{\sB^{\sD}}(F|_{\sD}, G|_{\sD})$ and $C \in \Ob \sC$.

We want to show the injectivity of $\gamma$.

Suppose given $\tau, \sigma \in  \Hom_{\sB^{\sD}}(F|_{\sD}, G|_{\sD})$ with $\tau\gamma=\sigma\gamma$. Then $\tau_C = \sigma_C$ holds for $C \in \Ob \sC$. Suppose given $D \in \Ob \sD$. We have to show that $\tau_D  =\sigma_D$ is true.

There exists a left-exact sequence $\xymatrix@1{D \,\arm[r]^d &\, C \,\ar[r]^c \ar[r] &\, C'}$ in $\sA$ with $C,C' \in \Ob \sC$.

We obtain the left-exact sequences $\xymatrix@1{F(D)\, \arm[r]^{F(d)} &\, F(C)\,\ar[r]^{F(c)} \ar[r] &\, F(C')}$ and   \\ $\xymatrix@1{G(D)\, \arm[r]^{G(d)} &\,G(C)\, \ar[r]^{G(c)} \ar[r] &\, G(C')}$ in $\sB$  by applying $F$ resp. $G$.

Since $\tau$ and $\sigma$ are transformations and since $\tau_C = \sigma_C$ holds, we get $\tau_D G(d) = F(d) \tau_C = F(d) \sigma_C = \sigma_D G(d)$.

Since $G(d)$ is monomorphic, we conclude that $\tau_D = \sigma_D$ holds.

The injectivity of $\gamma'$ is inherited from $\gamma$.

Now we want to show the surjectivity of $\gamma$.

Suppose given $\rho \in \Hom_{\sB^{\sC}}(F|_{\sC},G|_{\sC})$. We have to show that there exists $\tau \in \Hom_{\sB^{\sD}}(F|_{\sD},G|_{\sD})$ such that $\tau \star E = \rho$.

For $D \in \Ob \sD$, we choose a left-exact sequence $\xymatrix@1{D\, \armfl{.45}[r]^{d_D} &\, C_D\, \ar[r]^{c_D} &\, C'_{D}}$ in $\sA$ with $C_D,C'_D \in \Ob \sC$. In case $D \in \Ob \sC$, we choose the left-exact sequence $\xymatrix@1{D \,\arm[r]^1 & \,D\, \ar[r]^0 &\, D}$.

Suppose given $D \in \Ob \sD$.

We obtain the following commutative diagram.
\begin{align*}
\xymatrix{
F(D) \arm[r]^{F(d_D)} & F(C_D) \ar[r]^{F(c_D)} \ar[d]^{\rho_{C_D}} & F(C'_D) \ar[d]^{\rho_{C'_D}}\\
G(D) \arm[r]^{G(d_D)} & G(C_D) \ar[r]^{G(c_D)} & G(C'_D)
}
\end{align*}
The induced morphism between the kernels shall be denoted by $\tau_D \colon F(D) \to G(D)$. Note that $\tau_C = \rho_C$ holds for $C \in \Ob \sC$. If $\rho \in \Hom^{\iso}_{\sB^{\sC}}(F|_{\sC},G|_{\sC})$ is true, then the map $\tau_D$ is an isomorphism, cf. lemma \ref{lem:inducedmorphisms}~(a).

Next, we show the naturality of $\tau = (\tau_D)_{D \in \Ob \sD}$.

Suppose given $\xymatrix@1{X \,\ar[r]^f &\, Y}$ in $\sD$. Since $d_X$ is monomorphic and since $C_Y$ is injective, there exists 
$g \colon C_X \to C_Y$ such that $d_X g = f d_Y$ holds.
\begin{align*}
\xymatrix{
X \ar[d]_{f d_y} \arm[r]^{d_X} & C_X \ar[dl]^g\\ C_Y 
}
\end{align*}
We have
\begin{align*}
&\mathrel {\phantom{=}} \tau_X G(f) G(d_Y) = \tau_X G(d_X) G(g) = F(d_X) \rho_{C_X} G(g)\\
& = F(d_X) F(g) \rho_{C_Y}= F(f) F(d_Y) \rho_{C_Y} = F(f) \tau_Y G(d_Y).
\end{align*}

Since $G(d_Y)$ is monomorphic, we conclude that $\tau_X G(f) = F(f) \tau_Y$ holds.

We have $\tau \gamma = \tau \star E = (\rho_C)_{C \in \Ob \sC}=\rho$, so $\gamma$ is surjective.

As seen above, the map $\tau_D$ is an isomorphism for $\rho \in \Hom^{\iso}_{\sB^{\sC}}(F|_{\sC}, G|_{\sC})$ and $D \in \Ob \sD$, so $\gamma'$ is surjective too.

Ad (b). This is dual to (a).
\end{proof}

\begin{nota}
Given a diagram $\xymatrix@1{ A_0 \,\ar[r]^{a_0} &\, A_1}$ in $\sA$, we obtain a functor $A \in \Ob(\sA^{\Delta_1})$ by setting $A(0):=A_0$, $A(1):=A_1$, $A(\xymatrix@1@C=4mm{0\, \ar[r] &\,1}):=a_0$,  $A(\xymatrix@1@C=4mm{0 \,\ar[r]^{1} &\,0}):=1_{A_0}$ and $A(\xymatrix@1@C=4mm{1 \,\ar[r]^{1} &\,1}):=1_{A_1}$.

For $A \in \Ob(\sA^{\Delta_1})$, we therefore set $A_0:=A(0)$, $A_1:=A(1)$, $a_0:=A(\xymatrix@1@C=4mm{0\, \ar[r] &\,1})$ and write \\
$A=\big(\xymatrix@1{ A_0\, \ar[r]^{a_0} &\, A_1}\big)$.

\smallskip

For a transformation $f \in \Hom_{\sA^{\Delta_1}}(A,B)$, we write $f=(f_0,f_1)$ instead of $f=
(f_i)_{i \in \Ob(\Delta_1)}$. We also write
\begin{align*}
 \left(\begin{smallmatrix}{\xymatrix{A\ar[d]^f \\B}}\end{smallmatrix} \! \right) = \left(\begin{smallmatrix}\xymatrix{A_0\ar[d]^{f_0} \ar[r]^{a_0} & A_1\ar[d]^{f_1}  \\ B_0 \ar[r]^{b_0} & B_1 }\end{smallmatrix}\right).
\end{align*}

\smallskip

Given $A,B \in \Ob(\sA^{\Delta_1})$ and morphisms $f_0 \colon A_0 \to B_0$, $f_1 \colon A_1 \to B_1$ in $\sA$ satisfying $a_0 f_1 = f_0 b_0$, we obtain a transformation $f = (f_0,f_1)\in \Hom_{\sA^{\Delta_1}}(A,B)$.

Cf. notation \ref{nota:functor}.
\end{nota}

\begin{lem} \label{lem:delta1exact}
Suppose given an abelian category $\sB$. Suppose given $\xymatrix@1{A \,\ar[r]^f &\,B\, \ar[r]^g & \,C}$ in $\sB^{\Delta_1}$.
\begin{itemize}
	\item[(a)] The sequence $\xymatrix@1{A \,\ar[r]^f &\,B\, \ar[r]^g &\, C}$ in $\sB^{\Delta_1}$ is left-exact in $\sB^{\Delta_1}$ if and only if the sequences $\xymatrix@1{A_0 \,\ar[r]^{f_0} &\,B_0\, \ar[r]^{g_0} &\, C_0}$ and $\xymatrix@1{A_1 \,\ar[r]^{f_1} &\,B_1\, \ar[r]^{g_1} & \,C_1}$ are left-exact in $\sB$.
	\item[(b)] The sequence $\xymatrix@1{A\, \ar[r]^f &\,B\, \ar[r]^g & \,C}$ in $\sB^{\Delta_1}$ is right-exact in $\sB^{\Delta_1}$ if and only if the sequences $\xymatrix@1{A_0\, \ar[r]^{f_0} &\,B_0\, \ar[r]^{g_0} &\, C_0}$ and $\xymatrix@1{A_1 \,\ar[r]^{f_1} &\,B_1 \ar[r]^{g_1} &\, C_1}$ are right-exact in $\sB$.
\end{itemize}
\end{lem}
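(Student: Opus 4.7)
The plan is to reduce both parts to the fact that kernels and cokernels in the functor category $\sB^{\Delta_1}$ are computed componentwise. I would establish this pointwise computation explicitly, then derive the exactness statements using the uniqueness of kernels and cokernels up to isomorphism via lemma~\ref{lem:inducedmorphisms}.

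For part (a), given $g = (g_0, g_1) \colon B \to C$ in $\sB^{\Delta_1}$, I would construct a kernel as follows. Choose kernels $k_0 \colon K_0 \to B_0$ of $g_0$ and $k_1 \colon K_1 \to B_1$ of $g_1$ in $\sB$. From naturality of $g$ we obtain $k_0 b_0 g_1 = k_0 g_0 c_0 = 0$, so there is a unique $\kappa_0 \colon K_0 \to K_1$ satisfying $\kappa_0 k_1 = k_0 b_0$; this makes $K := \big( K_0 \to K_1 \big)$ with structure map $\kappa_0$ into an object of $\sB^{\Delta_1}$ and $k := (k_0, k_1) \colon K \to B$ into a transformation with $kg = 0$ componentwise. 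To verify the universal property, given $h = (h_0, h_1) \colon X \to B$ in $\sB^{\Delta_1}$ with $hg = 0$, I obtain unique componentwise factorizations $u_i \colon X_i \to K_i$ with $u_i k_i = h_i$; the pair $u = (u_0, u_1)$ is then natural because
\[
x_0 u_1 k_1 = x_0 h_1 = h_0 b_0 = u_0 k_0 b_0 = u_0 \kappa_0 k_1,
\]
and $k_1$ is monomorphic. Uniqueness of $u$ in $\sB^{\Delta_1}$ is inherited from the uniqueness of each $u_i$ in $\sB$.

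With this pointwise kernel construction in hand, the \emph{if} direction of (a) is immediate: if $f_0$ and $f_1$ are kernels of $g_0$ and $g_1$, then since both $a_0$ and the induced $\kappa_0$ satisfy $\xi f_1 = f_0 b_0$ and $f_1$ is monomorphic, we have $\kappa_0 = a_0$, so $f$ itself is a pointwise kernel of $g$. For the \emph{only if} direction, suppose $f$ is a kernel of $g$ in $\sB^{\Delta_1}$; lemma~\ref{lem:inducedmorphisms}~(a) applied to $f$ and the pointwise kernel $(k_0, k_1) \colon K \to B$ constructed above yields an isomorphism $A \to K$ in $\sB^{\Delta_1}$, whose components $A_i \to K_i$ are isomorphisms in $\sB$ because identities and composition in $\sB^{\Delta_1}$ are componentwise. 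These component isomorphisms identify each $f_i$ with $k_i$ as kernels of $g_i$, establishing componentwise left-exactness.

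Part (b) is entirely dual: cokernels in $\sB^{\Delta_1}$ are computed componentwise by the mirror construction, using the epimorphicity of cokernels to induce the structure map on the cokernel diagram, and the same two-step argument applies with lemma~\ref{lem:inducedmorphisms}~(b) in place of (a). The main obstacle, to the extent there is one, is the naturality verification for the induced factorization; this is a short diagram chase whose validity depends crucially on the monomorphicity of the kernel component (respectively epimorphicity of the cokernel component) to cancel at the end.
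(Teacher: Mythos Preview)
Your proof is correct. The paper takes a somewhat different route for the ``only if'' direction of (a): rather than first constructing a general pointwise kernel and then comparing it to $f$ via lemma~\ref{lem:inducedmorphisms}, it verifies directly that $f_0$ and $f_1$ are kernels by testing against carefully chosen objects of $\sB^{\Delta_1}$, namely $\big(\xymatrix@1{X\,\ar[r]^{1}&\,X}\big)$ to extract the universal property of $f_0$ and $\big(\xymatrix@1{0\,\ar[r]^{0}&\,X}\big)$ to extract that of $f_1$. For the ``if'' direction the two arguments essentially coincide, including the use of monomorphicity of $f_1$ to force naturality of the induced factorisation. Your approach has the advantage of isolating a reusable structural fact (kernels in $\sB^{\Delta_1}$ are computed componentwise) from which both implications follow uniformly; the paper's approach avoids the auxiliary object $K$ and is slightly more hands-on. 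Both treat (b) by duality.
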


\begin{proof}
Ad (a). Suppose $\xymatrix@1{A \,\ar[r]^f &\,B\, \ar[r]^g &\, C}$ to be left-exact. 

We want to show that $f_0$ is a kernel of $g_0$.

Suppose given $t \colon X \to B_0$ in $\sB$ with $t g_0=0$. The morphism $(t,tb_0) \colon \big(\xymatrix@1{X\, \ar[r]^1&\,X}\big) \to B$ is well-defined with $(t,tb_0) g =0$ since we have $t \cdot b_0 = 1 \cdot t b_0$ and $(t,tb_0)g=(tg_0,tb_0g_1)=(0,tg_0c_0) = (0,0)=0$.
\begin{align*}
\xymatrix{
X \ar[r]^1 \ar[d]^t & X \ar[d]^{tb_0} \\ B_0 \ar[r]^{b_0}  \ar[d]^{g_0} & B_1 \ar[d]^{g_1}\\C_0 \ar[r]^{c_0} & C_1
}
\end{align*}
There exists $u \colon \big(\xymatrix@1{X \,\ar[r]^1&\,X}\big) \to A$ with $uf=(t,tb_0)$ since $f$ is a kernel of $g$. So $u_0 f_0 = t$. \\
Given $v \colon X \to A_0$ with $vf_0=t$, we have $(v,va_0) \colon \big(\xymatrix@1{X \,\ar[r]^1&\,X}\big) \to A$ in $\sB^{\Delta_1}$ with $(v,va_0)f=(t,tb_0)$ since $v \cdot a_0 = 1 \cdot va_0$ and $(v,va_0)f = (vf_0,va_0f_1)=(t,vf_0b_0)=(t,tb_0)$ hold. So $(v,va_0)=u$ and $v=u_0$, again since $f$ is a kernel of $g$.\\
We conclude that $\xymatrix@1{A_0 \,\ar[r]^{f_0} &\,B_0\, \ar[r]^{g_0} & \,C_0}$ is left-exact.

We want to show that $f_1$ is a kernel of $g_1$.

Suppose given $t \colon X \to B_1$ in $\sB$ with $ t g_1=0$. The morphism $(0,t) \colon \big(\xymatrix@1{0 \,\ar[r]^0 &\,X}\big) \to B$ is well-defined with $(0,t)g=0$ since we have $0 \cdot t = 0 =0 \cdot b_1$ and $(0,t)g=(0g_0,tg_1)=(0,0)=0$.
\begin{align*}
\xymatrix{
0 \ar[r]^0 \ar[d]^0 & X \ar[d]^{t} \\ B_0 \ar[r]^{b_0}  \ar[d]^{g_0} & B_1 \ar[d]^{g_1}\\C_0 \ar[r]^{c_0} & C_1
}
\end{align*}
There exists $u \colon \big(\xymatrix@1{0 \,\ar[r]^0 &\,X}\big) \to A$ with $uf =(0,t)$ since $f$ is a kernel of $g$. So $u_1 f_1=t$.  \\
Given $v \colon X \to A_1$ with $v f_1 = t$, we have $(0,v) \colon \big(\xymatrix@1{0\, \ar[r]^0 &\,X}\big) \to A$ with $(0,v) f = (0,t)$ since $0 \cdot v = 0 = 0 \cdot a_0$ and $(0,v) f = (0 \cdot f_0,v \cdot f_1) = (0,t)$ hold. So $(0,v) = u$ and $v = u_1$, again since $f$ is a kernel of $g$.\\
We conclude that $\xymatrix@1{A_1\, \ar[r]^{f_1} &\,B_1\, \ar[r]^{g_1} & \,C_1}$ is left-exact.

Suppose $\xymatrix@1{A_0 \,\ar[r]^{f_0} &\,B_0\, \ar[r]^{g_0} & \,C_0}$ and $\xymatrix@1{A_1\, \ar[r]^{f_1} &\,B_1\, \ar[r]^{g_1} &\, C_1}$ to be left-exact.

We want to show that $f$ is a kernel of $g$.

Suppose given $t \colon X \to B$ in $\sB^{\Delta_1}$ with $tg=0$. We have $t_0g_0 = 0$ and $t_1 g_1 = 0$, so there exist $u_0 \colon X_0 \to A_0$ and $u_1 \colon X_1 \to A_1$ in $\sB$ such that $u_0f_0 = t_0$ and $u_1 f_1 = t_1$ hold since $f_0$ is a kernel of $g_0$ and $f_1$ is a kernel of $g_1$. The morphism $u=(u_0,u_1) \colon X \to A$ is well-defined since we have $x_0 u_1 f_1 = x_0 t_1 = t_0 b_0 = u_0f_0 b_0 = u_0a_0f_1$, and therefore $x_0u_1=u_0a_0$ because $f_1$ is monomorphic. The equation $uf=t$ holds since we have $uf = (u_0f_0,u_1f_1) = (t_0,t_1)=t$.\\
Given $v \colon X \to A$ with $vf = t$, we have $v_0f_0= t_0$ and $v_1 f_1 = t_1$. So $v_0 = u_0$, $v_1 = u_1$ and therefore $v=u$ hold, again since $f_0$ is a kernel of $g_0$ and since $f_1$ is a kernel of $g_1$.

We conclude that $\xymatrix@1{A \,\ar[r]^{f} &\,B\, \ar[r]^{g} &\, C}$ is left-exact.

Ad (b). \\This is dual to (a) using the canonical isomorphism of categories $(\sB^{\Delta_1})^{\op} \stackrel{\sim}\longrightarrow (\sB^{\op})^{\Delta_1}$. This isomorphism sends $A \in \Ob((\sB^{\Delta_1})^{\op})$ to $\big(\xymatrix@1{A_1 \,\ar[r]^{a_0^{\op}}&\,A_0}\big) \in \Ob((\sB^{\op})^{\Delta_1})$ and $f^{\op} \in \Mor((\sB^{\Delta_1})^{\op})$ to $(f_1^{\op},f_0^{\op}) \in \Mor((\sB^{\op})^{\Delta_1})$.
\end{proof}

\begin{prop} \label{prop:delta1}
Suppose given categories $\sA$ and $\sB$.
\begin{itemize}
	\item[(a)] Suppose that $F,G \colon \sA \to \sB$ are functors and $\alpha\colon F \Rightarrow G$ is a transformation.
We obtain a functor $K \colon \sA \to \sB^{\Delta_1}$ by setting $K(X) := \big( \xymatrix@1{F(X) \,\ar[r]^{\alpha_A} &\, G(X)} \big)$ for $X \in \Ob \sA$ and $K(f) := (F(f),G(f))$ for $f \in \Mor \sA$.

Suppose that $\sA$ and $\sB$ are additive categories. Then $F$ and $G$ are additive if and only if $K$ is additive. 

Suppose that $\sA$ and $\sB$ are abelian categories. Then $F$ and $G$ are exact if and only if $K$ is exact.
	\item[(b)] Conversely, given a functor $K \colon \sA \to \sB^{\Delta_1}$, we get functors $F,G \colon \sA \to \sB$ and a transformation $\alpha \colon F \Rightarrow G$ by setting $F(X) := K(X)_0$, $G(X) := K(X)_1$, $\alpha_X:=(K(X))(\xymatrix@1@C=4mm{0\, \ar[r]& \,1})$ for $X \in \Ob \sA$ and $F(f):=K(f)_0$, $G(f):=K(f)_1$ for $f \in \Mor\sA$.
	
	Therefore every functor $K \colon \sA \to \sB^{\Delta_1}$ is of the form discussed in (a).
\end{itemize}
\end{prop}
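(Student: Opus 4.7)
For part (a), first I would check that $K$ is a well-defined functor. The pair $(F(f),G(f))$ is a morphism from $K(X)$ to $K(Y)$ in $\sB^{\Delta_1}$ precisely when $F(f)\,\alpha_Y = \alpha_X\,G(f)$, which is the naturality of $\alpha$. Functoriality of $K$ then follows from functoriality of $F$ and $G$ componentwise, together with the componentwise formulas for composition and identities in $\sB^{\Delta_1}$ given in remark \ref{rem:functorcategory}~(b),(c). Assuming $\sA$ and $\sB$ additive, addition of morphisms in $\sB^{\Delta_1}$ is componentwise by remark \ref{rem:functorcategory}~(d), so the equation $K(f+g) = K(f) + K(g)$ holds if and only if $F(f+g)=F(f)+F(g)$ and $G(f+g)=G(f)+G(g)$ both hold. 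Thus $K$ is additive if and only if both $F$ and $G$ are.

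For the exactness claim in part (a), I would invoke lemma \ref{lem:delta1exact}, which characterises left- and right-exactness in $\sB^{\Delta_1}$ componentwise. For a left-exact sequence in $\sA$, its image under $K$ is left-exact in $\sB^{\Delta_1}$ if and only if the two component sequences (the $F$-image and the $G$-image) are both left-exact in $\sB$. Hence $K$ is left-exact if and only if both $F$ and $G$ are left-exact; the analogous argument using the right-exact half of lemma \ref{lem:delta1exact} handles right-exactness, and combining the two yields the equivalence for exactness.

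For part (b), given a functor $K \colon \sA \to \sB^{\Delta_1}$, I would set $F(X):=K(X)_0$, $G(X):=K(X)_1$, $F(f):=K(f)_0$, $G(f):=K(f)_1$, and $\alpha_X := K(X)(\xymatrix@1@C=4mm{0\,\ar[r]&\,1})$. Functoriality of $F$ and $G$ follows from that of $K$ using the componentwise composition and identity formulas in $\sB^{\Delta_1}$. The family $\alpha=(\alpha_X)_{X\in\Ob\sA}$ is natural because the statement that $K(f)=(F(f),G(f))$ is a morphism in $\sB^{\Delta_1}$ reads precisely $F(f)\,\alpha_Y=\alpha_X\,G(f)$ for every $\xymatrix@1{X\,\ar[r]^f&\,Y}$ in $\sA$. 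Applying the construction of (a) to the resulting triple $(F,G,\alpha)$ recovers the original $K$ on the nose, establishing the final claim.

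I expect no substantive obstacle: the entire argument is bookkeeping plus a single invocation of lemma \ref{lem:delta1exact}, in which the real content of the componentwise characterisation of exactness in $\sB^{\Delta_1}$ has already been established.
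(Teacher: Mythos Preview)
Your proposal is correct and follows essentially the same approach as the paper: well-definedness of $K$ via naturality of $\alpha$, componentwise verification of functoriality and additivity using the structure of $\sB^{\Delta_1}$, the exactness equivalence via lemma \ref{lem:delta1exact}, and in (b) the componentwise extraction of $F$, $G$ and $\alpha$ with naturality read off from the fact that $K(f)$ is a $\sB^{\Delta_1}$-morphism.
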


I learned this technique from Sebastian Thomas and Denis-Charles Cisinski.

\begin{proof}
Ad (a).  The functor $K \colon \sA \to \sB^{\Delta_1}$ is well-defined since we have
$\alpha_X G(f) = F(f) \alpha_Y$,
\begin{align*}
K(fg) &= (F(fg),G(fg))=(F(f)F(g),G(f)G(g))=(F(f),G(f))(F(g),G(g)) =K(f)K(g)
\end{align*}
and
\begin{align*}
K(1_X) = (F(1_X),G(1_X))=(1_{F(X)},1_{G(X)})=1_{K(X)}
\end{align*}
for $\xymatrix@1{X\, \ar[r]^f &\, Y \, \ar[r]^g & \,Z}$ in $\sA$.

Suppose that $\sA$ and $\sB$ are additive.\\
If $F$ and $G$ are additive, then we have
\begin{align*}
K(f+h)&=(F(f+h),G(f+h))=(F(f)+F(h),G(f)+G(h)) \\&= (F(f),G(f))+(F(h),G(h))= K(f)+K(h)
\end{align*}
for $\xymatrix@1{X\, \ar@<0.4ex>[r]^{f}\ar@<-0.4ex>[r]_{h}&\,Y}$ in $\sA$, so $K$ is additive.\\
If $K$ is additive, then we have
\begin{align*}
(F(f+h),G(f+h)) &= K(f+h)=K(f)+K(h) = (F(f),G(f))+(F(h),G(h))\\
&=(F(f)+F(h),G(f)+G(h))
\end{align*}
for $\xymatrix@1{X\, \ar@<0.4ex>[r]^{f}\ar@<-0.4ex>[r]_{h}&\,Y}$ in $\sA$. Therefore $F(f+h)=F(f)+F(h)$ and $G(f+h)=G(f)+G(h)$ hold, so $F$ and $G$ are additive.

The statement about the exactness of the functors follows from the previous lemma \ref{lem:delta1exact}:\\
Suppose that $\sA$ and $\sB$ are abelian. For a left-(right-)exact sequence $\xymatrix@1{X\, \ar[r]^f & \,Y\, \ar[r]^g & \,Z}$ in $\sA$, the sequence $\xymatrix@1{K(X)\, \ar[r]^{K(f)}&\,K(Y)\, \ar[r]^{K(g)}&\,K(Z)}$ is left-(right-)exact in $\sB^{\Delta_1}$ if and only if the sequences $\xymatrix@1{F(X) \,\ar[r]^{F(f)}&\,F(Y)\, \ar[r]^{F(g)}&\,F(Z)}$ and $\xymatrix@1{G(X)\, \ar[r]^{G(f)}&\,G(Y)\, \ar[r]^{G(g)}&\,G(Z)}$ are left-(right-)exact in $\sB$.

Ad (b). We have 
\begin{align*}
(F(1_X),G(1_X))=K(1_X) = 1_{K(X)} = (1_{F(X)}, 1_{G(X)}),
\end{align*}
\begin{align*}
(F(fg),G(fg)) = K(fg) = K(f)K(g)=(F(f),G(f))(F(g),G(g))=(F(f)F(g),G(f)G(g))
\end{align*}
and therefore $F(1_X)=1_{F(X)}$, $G(1_X)=1_{G(X)}$, $F(fg)=F(f)F(g)$ and $G(fg)=G(f)G(g)$
for $\xymatrix@1{X \,\ar[r]^f &\,Y \,\ar[r]^g&\,Z}$ in $\sA$. 

We have $F(f) \alpha_Y = K(f)_0  (K(Y))(\xymatrix@1@C=4mm{0\, \ar[r]& \,1}) = (K(X))(\xymatrix@1@C=4mm{0\, \ar[r]& \,1}) K(f)_1 = \alpha_X G(f)$ for $\xymatrix@1{X\, \ar[r]^f &\,Y}$ in $\sA$.
\end{proof}

\begin{rem}
Without proof, 	the constructions in the previous proposition \ref{prop:delta1} yield an isomorphism of categories $(\sB^{\sA})^{\Delta_1} \stackrel{\sim}\longrightarrow (\sB^{\Delta_1})^{\sA}$.
\end{rem}

\begin{thrm}[Universal property of the Adelman category] \label{thrm:universalproperty}
Suppose given an additive category $\sA$ and an abelian category $\sB$.
\begin{itemize}
	\item[(a)] Suppose given an additive functor $F \colon \sA \to \sB$.

We set $\hat F:= \H_{\sB} \circ \Adel(F) \colon \Adel(\sA) \to \sB$, cf. definitions \ref{defn:adelfunctortransformation} and \ref{defn:H}.

The functor $\hat F$ is exact with $\hat F \circ \I_\sA = F$.

If $G,\tilde{G} \colon \Adel(\sA) \to \sB$ are exact functors and $\sigma \colon G\circ \I_{\sA} \Rightarrow \tilde G \circ \I_{\sA}$ is an isotransformation, then there exists an isotransformation $\tau \colon G \Rightarrow \tilde G$ with $\tau \star \I_{\sA} = \sigma$. In particular, this holds for $G \circ \I_{\sA} = \tilde G \circ \I_{\sA}$ and $\sigma = 1_{G \circ \I_{\sA}}$.
\begin{align*}
\xymatrix{
\sA \ar[r]^{F} \ar[d]_{\I_{\sA}}& \sB\\
\Adel(\sA) \ar[ur]_{\hat F}
}
\end{align*}
\item[(b)] Suppose given additive functors $F,G \colon \sA \to \sB$ and a transformation $\alpha \colon F \Rightarrow G$.

There exists a unique transformation $\hat \alpha \colon \hat F \Rightarrow \hat G$ satisfying $\hat \alpha \star \I_{\sA} = \alpha$.
\begin{align*}
\xymatrix{
\sA \ar[dd]_{\I_{\sA}} \ar@/^2mm/[rr]^F="F" \ar@/_2mm/[rr]_G="G"  \ar@2"F"+<0mm,-2.6mm>;"G"+<0mm,2.4mm>_\alpha && \sB
\\ \\
\Adel(\sA) \ar@/^2mm/[uurr]^*+<-2mm,-2mm>{\scriptstyle\hat F}="hF" \ar@/_2mm/[uurr]_*+<-2.3mm,-2.3mm>{\scriptstyle\hat G}="hG"  \ar@2"hF"+<1.5mm,-1.8mm>;"hG"+<-1.1mm,1.6mm>_{\hat \alpha}
}
\end{align*}
\end{itemize}
\end{thrm}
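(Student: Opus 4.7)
The plan is to set $\hat F := \H_{\sB} \circ \Adel(F)$ for part (a). Exactness of $\hat F$ is immediate from Theorem/Definition~\ref{defn:adelfunctortransformation}(a) (giving exactness of $\Adel(F)$) and Theorem~\ref{thrm:Hexact} (giving exactness of $\H_{\sB}$). For the equality $\hat F \circ \I_{\sA} = F$, I would evaluate at $S \in \Ob \sA$: the object $(\Adel(F) \circ \I_{\sA})(S)$ equals $\bigl( F(0_{\sA}) \to F(S) \to F(0_{\sA}) \bigr)$, and since $F(0_{\sA})$ is a zero object in $\sB$ by Proposition~\ref{prop:additivefunctor}, the prescribed choices in Definition~\ref{defn:H} force the kernel, cokernel and image to be identities on $F(S)$, so $\H_{\sB}$ returns $F(S)$; the analogous computation on morphisms gives $\hat F(\I_{\sA}(f)) = F(f)$. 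For part (b) existence, I would set $\hat\alpha := \H_{\sB} \star \Adel(\alpha)$, which is a transformation $\hat F \Rightarrow \hat G$ by functoriality of horizontal composition; at $S \in \Ob \sA$ the component $(\Adel(\alpha))_{\I_{\sA}(S)} = [\alpha_{0_{\sA}}, \alpha_S, \alpha_{0_{\sA}}] = [0, \alpha_S, 0]$ (since $\alpha_{0_{\sA}}$ is between zero objects), and the same identity-choice computation yields $\H_{\sB}([0, \alpha_S, 0]) = \alpha_S$, whence $\hat\alpha \star \I_{\sA} = \alpha$.

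For uniqueness in both parts the plan is a two-step extension via Proposition~\ref{prop:transformationsubcategories}. First, transfer the given (iso)transformation on $\sA$ to a corresponding (iso)transformation $\sigma'$ on the full subcategory $\I_{\sA}(\sA) \subseteq \Adel(\sA)$ by $\sigma'_{\I_{\sA}(S)} := \sigma_S$; this is well-defined and natural because $\I_{\sA}$ is bijective on objects onto $\I_{\sA}(\sA)$ and is full and faithful. Step one: apply Proposition~\ref{prop:transformationsubcategories}(a) with $\sC = \I_{\sA}(\sA)$ and $\sD = \sR(\sA)$---the objects of $\sC$ are injective in $\Adel(\sA)$ by Example~\ref{exa:inclusion}, and Lemma~\ref{lem:Rkernel} supplies a left-exact sequence $P \to \I_{\sA}(A) \to \I_{\sA}(B)$ for each $P \in \sR(\sA)$---to extend $\sigma'$ uniquely to $\sR(\sA)$. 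Step two: apply Proposition~\ref{prop:transformationsubcategories}(b) with $\sC = \sR(\sA)$ and $\sD = \Adel(\sA)$---the objects of $\sR(\sA)$ are projective in $\Adel(\sA)$ by Proposition~\ref{prop:projectivesinjectives}(a), and Lemma~\ref{lem:Adelcokernel} supplies a right-exact resolution by $\sR(\sA)$-objects for each object of $\Adel(\sA)$---to extend uniquely to all of $\Adel(\sA)$. For (a) the isotransformation bijections produce the required $\tau: G \Rightarrow \tilde G$ with $\tau \star \I_{\sA} = \sigma$; for (b) the injectivity of the ordinary hom-set bijections gives the uniqueness of $\hat\alpha$.

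The main obstacle will be verifying that the hypotheses of Proposition~\ref{prop:transformationsubcategories} fit together correctly in both directions: the first step needs left-exactness of the two functors involved, while the second step needs right-exactness, so the argument depends on having \emph{exact} functors at this stage (true for $G, \tilde G$ in (a) by assumption, and for $\hat F, \hat G$ in (b) by part (a) already proved). The remaining verifications---naturality of the transferred $\sigma'$ and the identity-choice computations for $\H_{\sB}$ on objects of the form $(F(0_{\sA}) \to F(S) \to F(0_{\sA}))$---are routine.
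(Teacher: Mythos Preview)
Your proposal is correct, and for part~(a) and the uniqueness arguments it follows the paper essentially verbatim: same definition of $\hat F$, same appeal to the special choices in Definition~\ref{defn:H} for the equality $\hat F\circ\I_{\sA}=F$, and the same two-step extension $\I_{\sA}(\sA)\subseteq\sR(\sA)\subseteq\Adel(\sA)$ via Proposition~\ref{prop:transformationsubcategories} for uniqueness.

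The genuine difference is in the \emph{existence} for part~(b). You take the direct route $\hat\alpha := 1_{\H_{\sB}}\star\Adel(\alpha)$ and verify $\hat\alpha\star\I_{\sA}=\alpha$ by evaluating the middle component, again using the identity choices in Definition~\ref{defn:H} (note that $(\Adel(F)\circ\I_{\sA})(S)$ and $(\Adel(G)\circ\I_{\sA})(S)$ have the \emph{zero objects} $F(0_{\sA})$, $G(0_{\sA})$ at the outer positions, so the clause ``$A_0,A_2,B_0,B_2\in\nZ_{\sB}$'' applies and $\H_{\sB}$ returns the middle entry $\alpha_S$). The paper instead packages $F$, $G$, $\alpha$ into a single additive functor $K\colon\sA\to\sB^{\Delta_1}$, applies part~(a) to obtain an exact $\hat K$, unpacks it into $\tilde F$, $\tilde G$, $\tilde\alpha$, and then conjugates $\tilde\alpha$ by the comparison isotransformations $\hat F\cong\tilde F$ and $\tilde G\cong\hat G$ supplied by the uniqueness clause of~(a). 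Your approach is shorter and more transparent; the paper's approach has the conceptual appeal of reducing~(b) to~(a) via the arrow-category trick, at the cost of the auxiliary Lemma~\ref{lem:delta1exact} and Proposition~\ref{prop:delta1}. One small notational point: in the paper's conventions only $\gamma\star F$ is abbreviated, so your $\H_{\sB}\star\Adel(\alpha)$ should strictly be written $1_{\H_{\sB}}\star\Adel(\alpha)$.
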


\begin{proof}
Ad (a). The functors $\Adel(F) \colon \Adel(\sA) \to \Adel(\sB)$ and $\H_{\sB} \colon \Adel(\sB) \to \sB$ are exact, cf. theorems \ref{defn:adelfunctortransformation} and \ref{thrm:Hexact}. Therefore $\hat F$ is exact too. 

Suppose given $\xymatrix@1{X \,\ar[r]^f&\,Y}$ in $\sA$. Note that $F(0_{\sA}) \in \nZ_{\sB}$, cf. definition \ref{defn:H}.
We have 
\begin{align*}
(\hat F \circ \I_{\sA})(X) = ( \H_{\sB} \circ \Adel(F))(\I_{\sA}(X)) =\H_{\sB}\big(\big( \xymatrix@1{F(0)\ar[r]^-0&F(X)\ar[r]^-0&F(0)}\big)\big) = F(X)
\end{align*}
and
\begin{align*}
(\hat F \circ \I_{\sA})(f) = ( \H_{\sB} \circ \Adel(F))(\I_{\sA}(f)) =\H_{\sB}([F(0),F(f),F(0)]) = F(f).
\end{align*}
We conclude that $\hat F \circ \I_{\sA}=F$ holds.

We abbreviate $\sA' :=\I_{\sA}(\sA)$, cf. definition~\ref{defn:inclusionfunctor}. 

Let $\sigma'_{\I_{\sA}(X)} := \sigma_X$ for $X \in \Ob \sA$. Then $\sigma' \in \Hom^{\iso}_{\sB^{\sA'}}(G|_{\sA'}, \tilde G|_{\sA'})$ since we have
\begin{align*}
\sigma'_{\I_{\sA}(X)} \tilde G(\I_{\sA}(f)) =\sigma_X (\tilde G \circ \I_{\sA})(f) = ( G \circ \I_{\sA})(f)\sigma_Y = G(\I_{\sA}(f)) \sigma'_{\I_{\sA}(Y)}
\end{align*}
for $\xymatrix@1{X\, \ar[r]^f & \,Y}$ in $\sA$ and since $\sigma_X$ is an isomorphism for $X \in \Ob \sA$. 

Let $\I_{\sA}':=\I_{\sA}|^{\sA'} \colon \sA \to \sA'$.

We have $\sigma' \star \I_{\sA}' =\sigma$ since $(\sigma' \star \I_{\sA}')_X = \sigma'_{\I_{\sA}(X)} = \sigma_X$ holds for $X \in \Ob \sA$.

Consider the subcategories $\sA' \subseteq \sR(\sA) \subseteq \Adel(\sA)$. The embedding functor from $\sA'$ to $\sR(\sA)$ shall be denoted by $E \colon \sA' \to  \sR(\sA)$ and the embedding functor from $\sR(\sA)$ to $\Adel(\sA)$ shall be denoted by $E' \colon \sR(\sA) \to \Adel(\sA)$. 
\begin{align*}
\xymatrix{
\sA \ar[r]^-{\I_{\sA}'} & \sA' \ar[r]^-E & \sR(\sA) \ar[r]^-{E'} & \Adel(\sA)
}
\end{align*}
Note that $E' \circ E \circ \I_{\sA}' = \I_{\sA}$ holds.

For $P \in \sR(\sA)$, there exists a left-exact sequence $\xymatrix@1{P\,\armfl{0.37}[r]^-k &\,\I_\sA(X)\,\ar[r]^-{\I_\sA(f)}&\,\I_\sA(Y)}$ in $\Adel(\sA)$ with $\xymatrix@1{X \,\ar[r]^f&\,Y}$ in $\sA$, cf. lemma \ref{lem:Rkernel}.
The objects in $\sA'$ are injective, cf. remark \ref{exa:inclusion}.

Proposition \ref{prop:transformationsubcategories}~(a) now gives $\rho \in \Hom^{\iso}_{\sB^{\sR(\sA)}}(G|_{\sR(\sA)}, \tilde G|_{\sR(\sA)})$ with $\rho \star E= \sigma'$.

Consider the subcategories $ \sR(\sA) \subseteq\Adel(\sA) \subseteq \Adel(\sA)$.
For $A \in \Ob(\Adel(\sA))$, there exists a right-exact sequence $\xymatrix@1{P \,\ar[r]^f &\, Q\, \are[r]^c &\,A}$ with  $P,Q \in \Ob(\sR(\sA))$, cf. remark \ref{lem:Adelcokernel}. The objects in $\sR(\sA)$ are projective, cf. proposition \ref{prop:projectivesinjectives}.

Proposition \ref{prop:transformationsubcategories}~(b) now gives $\tau \in \Hom^{\iso}_{\sB^{\Adel(\sA)}}(G, \tilde G)$ with $\tau \star E'= \rho$.

We have
\begin{align*}
 \tau \star \I_{\sA} = \tau \star (E' \circ E \circ \I_{\sA}') = (\tau \star E') \star (E \circ \I_{\sA}') = \rho \star(E \circ \I_{\sA}')= (\rho \star E) \star \I_{\sA}' = \sigma' \star \I_{\sA}' = \sigma.
\end{align*}
%Suppose given functors $S,T \colon \Adel(\sA) \to \sB$ and a transformation $\gamma \colon S \to T$. For $A \in \Ob(\sA)$, we have $(\gamma \star 1_{\I_{\sA}})_A = \gamma_{I_{\sA}(A)}$.
Ad (b). We get an additive functor $K \colon \sA \to \sB^{\Delta_1}$ by setting $K(A) := \big( \xymatrix@1{F(A)\, \ar[r]^{\alpha_A} & \,G(A)} \big)$ for $A \in \Ob \sA$ and $K(f) := (F(f),G(f))$ for $f \in \Mor \sA$, cf. proposition \ref{prop:delta1}.

Part (a) gives an exact functor $\hat K \colon \Adel(\sA) \to \sB^{\Delta_1}$ with $\hat K \circ \I_{\sA} = K$. We obtain exact functors $\tilde F, \tilde G \colon \Adel(\sA) \to \sB$ and a transformation $\tilde \alpha \colon \tilde F \Rightarrow \tilde G$ by setting $\tilde F(A) := \hat K(A)_0$, $\tilde G(A) :=\hat K(A)_1$, $\tilde \alpha_A:=(\hat K(A))(\xymatrix@1@C=4mm{0\,\ar[r]&\, 1})$ for $A \in \Ob(\Adel(\sA))$ and $\tilde F(q):= \hat K(q)_0$, $\tilde G(q):=\hat K(q)_1$ for $q \in \Mor(\Adel(\sA))$, cf. proposition \ref{prop:delta1}.

Next, we show that the equations $\tilde F \circ \I_{\sA} =F$, $\tilde G \circ \I_{\sA} = G$ and $\tilde \alpha \star \I_{\sA} = \alpha$ hold.

For $X \in \Ob \sA$, we have $\hat K(\I_{\sA}(X)) = K(X) = \big( \xymatrix@1{F(X) \,\ar[r]^{\alpha_X} &\, G(X)} \big)$. This implies 
$\tilde F(\I_{\sA}(X)) = \hat K(\I_{\sA}(X))_0 = F(X)$, $\tilde G(\I_{\sA}(X)) = \hat K(\I_{\sA}(X))_1 = G(X)$ and $\tilde \alpha_{\I_{\sA}(X)} = \alpha_X$.

For $f \in \Mor \sA$, we have $\hat K(\I_{\sA}(f)) = K(f) = (F(f),G(f))$. This implies 
$\tilde F(\I_{\sA}(f)) = \hat K(\I_{\sA}(f))_0 = F(f)$ and $\tilde G(\I_{\sA}(f)) = \hat K(\I_{\sA}(f))_1 = G(f)$.

Part (a) gives isotransformations $\tau_F \colon \hat F \Rightarrow \tilde F$ and $\tau_G \colon  \tilde G \Rightarrow \hat G $ satisfying $\tau_F \star \I_{\sA}= 1_F$ and $\tau_G \star \I_{\sA} = 1_G$.

We set $\hat \alpha := \tau_F \tilde \alpha \tau_G \colon \hat F \Rightarrow \hat G$ and obtain $\hat \alpha \star \I_{\sA} = (\tau_F \tilde \alpha \tau_G) \star \I_{\sA} = (\tau_F \star \I_{\sA} )(\tilde \alpha \star \I_{\sA} )(\tau_G \star \I_{\sA} ) = 1_F \alpha 1_G = \alpha$.

We use the functors $\I_{\sA}'$, $E$ and $E'$ defined in (a). 

Suppose given $\beta,\gamma \colon \hat F \Rightarrow \hat G$ with $\beta \star \I_{\sA} = \gamma \star \I_{\sA}$.

We have $(\beta \star E') \star E = \beta \star (E' \circ E) = \gamma \star (E' \circ E) = (\gamma \star E') \star E$ since $(\beta \star (E' \circ E))_{\I'_{\sA}(X)} = \beta_{\I_{\sA}(X)} = (\beta \star \I_{\sA} )_X = (\gamma \star \I_{\sA})_X = \gamma_{\I_{\sA}(X)} = (\gamma \star (E' \circ E))_{\I'_{\sA}(X)}$ holds for $X \in \Ob \sA$.

Consider the subcategories $\sA' \subseteq \sR(\sA) \subseteq \Adel(\sA)$. As seen above, we may apply proposition~\ref{prop:transformationsubcategories}~(a). We conclude that $\beta \star E' = \gamma \star E'$ holds because of the injectivity.

Consider the subcategories $\sR(\sA) \subseteq \Adel(\sA) \subseteq \Adel(\sA)$. Again, we may apply proposition~\ref{prop:transformationsubcategories}~(b). We conclude that $\beta = \gamma$ holds because of the injectivity.
\end{proof}

\begin{rem}
We give an overview of the construction of $\hat F$ in the previous theorem \ref{thrm:universalproperty}.
\begin{align*}
\xymatrix{
\sA \ar[rr]^{F} \ar[dd]^{\I_{\sA}} && \sB \ar@/^12mm/[dddd]^{1_{\sB}} \ar[dd]^{\I_{\sB}}\\
\\
\Adel(\sA) \ar[ddrr]_{\hat F} \ar[rr]^{\Adel(F)} && \Adel(\sB) \ar[dd]^{\H_{\sB}}\\
\\
&&\sB
}
\end{align*}
Note that $\I_{\sB} \circ F = \Adel(F) \circ \I_{\sA}$ is not true in general, cf. theorem \ref{defn:adelfunctortransformation}. However, $\H_{\sB} \circ \Adel(F) = \hat F$, $\H_{\sB} \circ \I_{\sB} = 1_{\sB}$ and $F = 1_{\sB} \circ F = \hat F \circ \I_{\sA}$ hold.
\end{rem}

\begin{rem}\label{rem:last}
Suppose given an additive category $\sA$, an abelian category $\sB$ and an additive functor $F \colon \sA \to \sB$. We saw in theorem \ref{defn:adelfunctortransformation} that $\Adel(F) \colon \Adel(\sA) \to \Adel(\sB)$ is an exact functor and that there exists an isotransformation $\varepsilon^F \colon   \I_{\sB} \circ F  \to  \Adel(F) \circ \I_{\sA}$. By applying theorem \ref{thrm:universalproperty}~(a) to the additive functor $\I_{\sB} \circ F \colon \sA \to \Adel(\sB)$, we get an exact functor $\widehat{\I_{\sB} \circ F} \colon \Adel(\sA) \to \Adel(\sB)$ with $\widehat{\I_{\sB} \circ F} \circ \I_{\sA} = \I_{\sB} \circ F$.\\
We conclude that, by theorem \ref{thrm:universalproperty}~(a), $\Adel(F)$ and $\widehat{\I_{\sB} \circ F}$ are isomorphic in $\Adel(\sB)^{\Adel(\sA)}$.\\
Note that $\widehat{\I_{\sB} \circ F}$ depends on the choices we make in definition \ref{defn:H}, applied to the abelian category $\Adel(\sB)$.
\end{rem}

\begin{exa} \label{exa:Z}
Consider the (additive) inclusion functor $E$ from $\Zf$ to $\Zm$. Theorem~\ref{thrm:universalproperty}~(a) gives an exact functor $\hat E \colon \Adel(\Zf) \to \Zm$ with $\hat E \circ \I_{\Zf} = E$.
\begin{itemize}
	\item[(a)] The functor $\hat E$ is dense:
	
	Suppose given $X \in \Ob(\Zm)$. We choose a free resolution $\xymatrix@1{F_1\, \ar[r]^d &\,F_0\, \are[r]^*+<0.8mm,0.8mm>{\scriptstyle e} &\, X}$ of $X$.\\
	Consider $\big(\xymatrix@1{F_1\, \ar[r]^d &\,F_0\, \ar[r]^0 &\,0}\big)\in \Ob(\Adel(\Zf))$. A kernel of $0 \colon F_1 \to 0$ is given by $1_{F_1}$, a cokernel of $d$ is given by $e$ and an image of $1_{F_1} e$ is given by $\xymatrix@1{F_1\, \ar[r]^{e}& \,X\, 	\ar[r]^{1} &\, X}$.
	\begin{align*}
	\xymatrix{
	F_0 \ar[rr]^d &&F_1 \ar[rr] \are[dr]_e& & 0 \\
	&F_1 \arm[ur]_1 \are[dr]_e &&X \\&&X \arm[ur]_1
	}
	\end{align*}
	Therefore we have $\hat E\big( \big(\xymatrix@1{F_1\, \ar[r]^d &\,F_0\, \ar[r]^0 &\,0}\big) \big) \cong X$ in $\Zm$.

	\item[(b)]The functor $\hat E$ is not full:
	
	Let $A:=\Big( \xymatrix@1@C=12mm{\Z \oplus \Z\, \ar[r]^-{\sm{4&0\\0&1}} &\, \Z \oplus \Z\, \ar[r]^-{\sm{1\\2}}&\,\Z}\Big)$ and $B:=\big(\xymatrix@1{\Z\, \ar[r]^2 &\, \Z\, \ar[r]^0 &\,0}\big)$ in $\Adel(\Zf)$.
	A kernel of $\sm{1\\2} \colon \Z \oplus \Z \to \Z$ is given by $\sm{2&-1} \colon \Z \to \Z \oplus \Z$, a cokernel of\\ $\sm{4&0\\0&1} \colon \Z \oplus \Z \to \Z\oplus \Z$ is given by $\sm{1\\0} \colon \Z \oplus \Z \to \Z/4$ and an image of $\sm{2&-1} \sm{1\\0}$ is given by $\xymatrix@1{\Z \,\ar[r]^-1 & \,\Z/2\, \ar[r]^-2& \,\Z/4}$. So $\hat E(A) \cong \Z/2$.
	\begin{align*}
	\xymatrix{
	\Z \oplus \Z \ar[rr]^-{\sm{4&0\\0&1}} && \Z \oplus \Z \are[dr]_{\sm{1\\0}} \ar[rr]^-{\sm{1\\2}}&&\Z \\
	&\Z \arm[ur]_{\sm{2&-1}} \are[dr]_1 &&\Z/4 \\&&\Z/2 \arm[ur]_2
	}
	\end{align*}
	A kernel of $0 \colon \Z \to 0$ is given by $1_{\Z}$, a cokernel of $2 \colon \Z \to \Z$ is given by $1 \colon \Z \to \Z/2$ and an image of $1_{\Z}\cdot 1$ is given by $\xymatrix@1{\Z \,\ar[r]^-1 & \,\Z/2\, \ar[r]^-1 &\, \Z/2}$.  So $\hat E(B) \cong \Z/2$.
	\begin{align*}
	\xymatrix{
	\Z\ar[rr]^2 && \Z \ar[rr] \are[dr]_1& & 0 \\
	&\Z \arm[ur]_1 \are[dr]_1 && \Z/2\\&& \Z/2 \arm[ur]_1
	}
	\end{align*}
	Consider $1 \colon \Z/2 \to \Z/2$. If $\hat E$ was full, there would exist $g \colon \Z/4 \to \Z/2$ such that $2 \cdot g = 1$, which is impossible.
	\begin{align*}
	\xymatrix{&\Z/4 \ar[dd]^g \\ \Z/2 \ar[dd]^1\arm[ur]^2 \\&\Z/2\\\Z/2\arm[ur]^1}
	\end{align*}
	
	\item[(c)]The functor $\hat E$ is not faithful:
	
	Let $A:= \big(\xymatrix@1{0 \,\ar[r]^0 &\,\Z\, \ar[r]^2 &\, \Z}\big)$, $B:=\big( \xymatrix@1{0 \,\ar[r]^0 &\,\Z\, \ar[r]^0 &\,0}\big)$ and $[0,1,0] \colon A \to B$ in $\Adel(\Zf)$. We have $0 \neq [0,1,0]$ since there does not exist $t \colon \Z \to \Z$ such that $2 \cdot t = 1_{\Z}$.
	\begin{align*}
	\xymatrix{
	0\ar[d] \ar[r] & \Z\ar[d]_1 \ar[dl] \ar[r]^2 & \Z\ar[dl]^t\ar[d]\\
	0 \ar[r] &\Z \ar[r]&0
	}
	\end{align*}
	A kernel of $2 \colon \Z \to \Z$ is given by $0 \colon 0 \to \Z$, a cokernel of $0 \colon 0 \to \Z$ is given by $1_{\Z}$ and an image of $0 \cdot 1_{\Z}$ is given by $\xymatrix{0 \ar[r]^-0& 0 \ar[r]^-0 &\Z}$.
	\begin{align*}
	\xymatrix{
	0  \ar[rr] && \Z  \are[dr]_1 \ar[rr]^2 && \Z\\
	&0 \arm[ur]_0 \are[dr]_0 &&\Z \\
	&&0\arm[ur]_0
	}
	\end{align*}
	So $\hat E(A) \cong 0 = \hat E(B)$. We conclude that $\hat E([0,1,0]) = 0 = \hat E(0)$.
	
	\item[(d)] The functor $\hat E$ is not an equivalence due to (b) or (c). This also follows from $\Zm$ having not enough injectives or, alternatively, from the fact that $(\Z/2 \underset{\Z}{\otimes}-)\colon \Zm \to \Zzm$ is additive but not kernel-preserving:
	
	A kernel of $2 \colon \Z \to \Z$ is given by $0 \colon 0 \to \Z$ in $\Zf$, but $0 \colon 0 \to \Z/2$ is not a kernel of $0 \colon \Z/2 \to \Z/2$ in $\Zzm$.\\
	Now if $\hat E$ was an equivalence, the category $\Zm$ would also satisfy the universal property of the Adelman category and there would exist an exact functor\\ $(\widehat{\Z/2 \underset{\Z}{\otimes}-}) \colon \Zm \to \Zzm$ with $(\widehat{\Z/2 \underset{\Z}{\otimes}-}) \circ E = (\Z/2 \underset{\Z}{\otimes}-)$. But $(\widehat{\Z/2 \underset{\Z}{\otimes}-}) \circ E$ is kernel-preserving, whereas $(\Z/2 \underset{\Z}{\otimes}-)$ is not.
\end{itemize}
\end{exa}

%\newpage

%\mbox{}
%\thispagestyle{empty}

%\newpage

\bibliography{ref}
\bibliographystyle{acm}%{unsrt}%{amsplain}
\thispagestyle{empty}

%\newpage

%\mbox{}
%\thispagestyle{empty}

%\newpage

\end{document}